\title{Exact Recovery in the Geometric SBM}
\author{Julia Gaudio and Andrew Jin}
\DeclareMathOperator*{\argmin}{argmin}
\newcommand{\R}{\mathbb{R}}
\renewcommand{\P}{\mathbb{P}}
\newcommand{\E}{\mathbb{E}}
\newcommand{\N}{\mathbb{N}}
\newcommand{\1}{\mathbbm{1}}
\newcommand{\calS}{\mathcal{S}}
\newcommand{\calN}{\mathcal{N}}
\newcommand{\calE}{\mathcal{E}}
\newcommand{\calA}{\mathcal{A}}
\newcommand{\calI}{\mathcal{I}}
\newcommand{\calJ}{\mathcal{J}}
\newcommand{\calH}{\mathcal{H}}
\newcommand{\calX}{\mathcal{X}}
\newcommand{\sign}{\text{sign}}
\newcommand{\visible}{\leftrightarrow{}}
\newcommand{\dist}{\nparallel_\epsilon}
\newcommand{\floor}[1]{\lfloor #1 \rfloor}
\theoremstyle{definition}
\newtheorem{definition}{Definition}
\newtheorem{assumption}{Assumption}
\theoremstyle{plain}
\newtheorem{theorem}{Theorem}
\newtheorem{lemma}{Lemma}
\newtheorem{prop}{Proposition}
\begin{document}

\maketitle

\begin{abstract}
Community detection is the problem of identifying dense communities in networks. Motivated by transitive behavior in social networks (``thy friend is my friend''), an emerging line of work considers spatially-embedded networks, which inherently produce graphs containing many triangles. In this paper, we consider the problem of exact label recovery in the Geometric Stochastic Block Model (GSBM), a model proposed by Baccelli and Sankararaman as the spatially-embedded analogue of the well-studied Stochastic Block Model. Under mild technical assumptions, we completely characterize the information-theoretic threshold for exact recovery, generalizing the earlier work of Gaudio, Niu, and Wei. 
\end{abstract}

\section{Introduction}
\label{sec:intro}
Community detection is a central problem in network analysis, with applications to social, biological, and physical networks. The problem has been well-studied in the statistics, probability, and machine learning literature through the lens of the \emph{Stochastic Block Model} (SBM), a canonical model for generating random graphs with community structure, proposed by Holland, Laskey, and Leinhardt \cite{Holland1983}. In the simplest version of the SBM on $n$ vertices, each vertex is assigned to one of two communities, Community $+$ and Community $-$, with equal probability and independently. Each pair of vertices in Community $+$ is connected by an edge with probability $p$, likewise for each pair of vertices in Community $-$. On the other hand, any pair of vertices in opposite communities is connected with probability $q$. Typically, one considers $p > q$ so that the graph exhibits assortative community structure. Variations of this model allow for multiple communities, nonuniform community assignment priors, and general edge probability parameters; see \cite{Abbe2018} for a survey.

Recently, a new direction in the study of community detection has emerged, which considers spatially-embedded graphs. Several models have been proposed, with the Geometric Stochastic Block Model (GSBM) \cite{Sankararaman2018} being the most similar to the standard SBM. The GSBM was developed in order to create networks which exhibit transitive behavior (``thy friend is my friend''), a typical feature of social networks \cite{Rapoport1953}.
In the GSBM, vertices are embedded into a $d$-dimensional cube of volume $n$ according to a Poisson process with intensity $\lambda$. Similarly to the standard SBM, the vertices are assigned to either Community $+$ or Community $-$ with equal probability and independently. Two functions, $f_{\text{in}}(\cdot)$ and $f_{\text{out}}(\cdot)$ govern the random formation of edges: a pair of vertices $u,v$ in at distance $\Vert u - v \Vert$ from each other is connected with probability $f_{\text{in}}(\Vert u - v \Vert)$ if they are in the same community, and otherwise is connected with probability $f_{\text{out}}(\Vert u - v \Vert)$. The geometric dependence leads to transitive community structure, favoring the formation of triangles, a behavior that is not seen in the standard SBM. 

In more detail, consider the SBM in the so-called \emph{logarithmic degree regime} with $p_n = \frac{a \log n}{n}$ and $q_n = \frac{b \log n}{n}$. An analogous GSBM model is derived by setting
 $f_{\text{in}}(x) = \frac{a}{\lambda \nu_d} \mathbbm{1}\{x \leq r_n\}$ and $f_{\text{out}}(x) = \frac{b}{\lambda \nu_d} \mathbbm{1}\{x \leq r_n\}$, where $r_n = (\log n)^{1/d}$ and $\nu_d$ is the volume of a $d$-dimensional unit ball. That is, a pair of vertices can only be connected if they lie within distance $r_n$ of each other; in that case, they are connected with probability either $\frac{a}{\lambda \nu_d}$ or $\frac{b}{\lambda \nu_d}$ depending on whether they are in the same or opposite communities. Setting the parameters in this way ensures that in both models, each vertex is connected to $\frac{a \log n}{2}$ vertices in the same community and $\frac{b \log n}{2}$ vertices in the opposite community, in expectation. However, the conditional edge probabilities are very different. This is seen by examining the conditional probability that two vertices $u,v$ form an edge, given that $u$ and $v$ are both connected to another vertex $w$. In the SBM, the probability that $u,v$ are connected by an edge given that $u,w$ and $v,w$ are both connected by edges is $\Theta(\log n/n)$. In the GSBM, the same conditional probability is $\Theta(1)$; if $u,w$ and $v,w$ form edges, then $u$ and $v$ are within distance $r_n$ of $w$, which means there is a constant probability that $u,v$ are themselves within distance $r_n$ of each other. Thus the two models exhibit dramatically different transitive behavior.

 In this paper, we revisit the GSBM as proposed by Baccelli and Sankararaman, treating the case of general edge probability functions $f_{\text{in}}(\cdot)$ and $f_{\text{out}}(\cdot)$. Under mild technical assumptions, we determine the information-theoretic (IT) threshold for exact recovery of the community labels. The IT threshold is governed by a generalized Chernoff--Hellinger divergence, quantifying the difference between $f_{\text{in}}(\cdot)$ and $f_{\text{out}}(\cdot)$. Moreover, we propose an efficient algorithm for exact recovery, extending the approach of Gaudio, Niu, and Wei \cite{Gaudio+2024}.

\subsection{Related Work}
The appearance of a Chernoff--Hellinger divergence is familiar in the study of community detection, following the prominent work of Abbe and Sandon \cite{AbbeSandon2015}. The present work introduces a generalized definition that accounts for distance dependence.

Several papers have followed on to the introduction of the GSBM, first by Abbe, Baccelli, and Sankararaman \cite{Abbe+2020}, who established an impossibility result for the GSBM in the logarithmic-degree regime, and proposed an efficient algorithm that was not shown to match the impossibility result. Gaudio, Niu, and Wei \cite{Gaudio+2024} subsequently proposed an efficient algorithm that matched the impossibility result of \cite{Abbe+2020}, thus establishing the IT threshold for the exact recovery problem in the GSBM. An extension of Gaudio, Guan, Niu, and Wei \cite{Gaudio+2025} treated the case of multiple communities, and general observation distribution (e.g. Gaussian pairwise observations), and established a corresponding IT threshold. Alongside these works, other spatial community detection models have been studied, including the Gaussian mixture block model \cite{Li2023}, the Geometric Block Model \cite{Galhotra2018}, and the Soft Geometric Block Model \cite{Avrachenkov2021}.

This paper addresses a gap in the study of the GSBM, for which the focus has been on the case where $f_{\text{in}}(\cdot)$ and $f_{\text{out}}(\cdot)$ are step functions of the form $f_{\text{in}}(x) = a \mathbbm{1}\{x \leq r_n\}$, $f_{\text{out}}(x) = b \mathbbm{1}\{x \leq r_n\}$. The general version was studied by Avrachenkov, Kumar, and Leskel\"a \cite{Avrachenkov2024} for $d=1$, under the assumption that $f_{\text{in}}(\cdot)$ and $f_{\text{out}}(\cdot)$ are multiples of each other. In this paper, we do not require this assumption, and consider any constant dimension $d \geq 1$.

\subsection{Notation and Organization}
\label{subsec:notation}
We write $[n] := \{1, 2, \ldots, n\}$. We use $D_+(\cdot \Vert \cdot)$ to denote the Chernoff-Hellinger (CH) divergence, which was first introduced in \cite{AbbeSandon2015} and extended in \cite{Gaudio+2025}. To define the CH-divergence, let $p = (p_1, \ldots, p_k)$ and $q = (q_1, \ldots, q_k)$ be vectors of probability distributions, where $p_i(\cdot)$ and $q_i(\cdot)$ denote probability mass functions (in the discrete case) or probability density functions (in the continuous case). In addition, let $\pi = (\pi_1, \ldots, \pi_k)$ be a vector of prior probabilities. Then the CH-divergence between $p$ and $q$ is defined
\begin{equation}
    D_+(p \Vert q; \pi) = 1 - \inf_{t \in [0,1]} \sum_{i = 1}^k \pi_i \sum_{x \in \mathcal{X}} p_i(x)^t q_i(x)^{1-t} \label{eq:CH-formula}
\end{equation}
when $p$ and $q$ are discrete, and 
\begin{equation*}
    D_+(p \Vert q; \pi) = 1 - \inf_{t \in [0,1]} \sum_{i = 1}^k \pi_i \int_{x \in \mathcal{X}} p_i(x)^t q_i(x)^{1-t} \: dx
\end{equation*}
when $p$ and $q$ are continuous. We extend this definition to the case where $p_i$ and $q_i$ are functions of an additional parameter $y \in \mathcal{Y}$, which we therefore denote as $p_i(\cdot; y)$ and $q_i(\cdot; y)$. Let $g(y)$ be a density on $\mathcal{Y}$. Then, we define the CH-divergence between $p$ and $q$ as
\begin{equation}
\label{eq:CH_divergence_discrete}
    D_+(p \Vert q; \pi, g) = 1 - \inf_{t \in [0,1]} \sum_{i = 1}^k \pi_i \int_{y \in \mathcal{Y}} g(y)\sum_{x \in \mathcal{X}} p_i(x;y)^t q_i(x;y)^{1-t} \: dy
\end{equation}
when $p$ and $q$ are discrete, and 
\begin{equation}
\label{eq:CH_divergence_continuous}
    D_+(p \Vert q; \pi, g) = 1 - \inf_{t \in [0,1]} \sum_{i = 1}^k \pi_i \int_{y \in \mathcal{Y}} g(y) \int_{x \in \mathcal{X}} p_i(x;y)^t q_i(x;y)^{1-t} \: dx \: dy
\end{equation}
when $p$ and $q$ are continuous. 

The rest of the paper is organized as follows. Section \ref{sec:model} formally defines the Geometric Stochastic Block Model and states our main result establishing an information-theoretic threshold for exact recovery under suitable assumptions. Section \ref{sec:exact_recovery_algorithm} describes our algorithm to achieve exact recovery above the threshold and presents a proof outline. Sections \ref{sec:block_size} to \ref{sec:refine} prove that our algorithm achieves exact recovery above the threshold. Section \ref{sec:Impossibility} proves that exact recovery is impossible below the threshold. Section \ref{sec:conclusion} discusses future research directions.

\section{Model and Main Results}
\label{sec:model}
We now formally define the GSBM, as introduced by Baccelli and Sankararaman \cite{Sankararaman2018}.
\begin{definition}
    Let $\lambda > 0$, $r > 0$ be constants, and let $d \in \N$. Let $f_\text{in}, f_\text{out}: \R \to [0, 1]$ be functions satisfying the regularity conditions in Assumption \ref{assump:regularity_conditions}. Then, we sample a graph $G$ from $\text{GSBM}(\lambda, r, n, f_\text{in}, f_\text{out}, d)$ using the following procedure.
    \begin{enumerate}
        \item Vertices are distributed in the region $\calS_{d,n} := [-n^{1/d}/2, n^{1/d}/2]^d \subset \R^d$ via a Poisson point process with intensity parameter $\lambda$. That is, we first sample the number of vertices from a $\text{Pois}(\lambda n)$ distribution, then distribute the vertices in $\calS_{d,n}$ uniformly at random. Let $V$ denote the set of all vertices.

        \item Each vertex is independently assigned a community label from $\{+1, -1\}$ uniformly at random. We denote the community label of vertex $u$ as $\sigma^*(u)$.

        \item Conditioned on the locations and community labels, edges are formed independently between each pair of vertices. Let $E$ denote the set of all edges, and define the functions $\overline{f}_\text{in}(x) := f_\text{in}(x / (\log n)^{1/d}), \overline{f}_\text{out} := f_\text{out}(x / (\log n)^{1/d} )$. For each pair of vertices $\{u, v\}$ where $u \neq v$, an (undirected) edge is formed between the vertices with probability
        \begin{equation*}
            \P(\{u, v\} \in E) = 
            \begin{dcases}
                \overline{f}_\text{in}(\|u-v\|) \text{ if } \sigma^*(u) = \sigma^*(v) \\
                \overline{f}_\text{out}(\|u-v\|) \text{ if } \sigma^*(u) \neq \sigma^*(v)
            \end{dcases}
        \end{equation*}
        where $\| \cdot \|$ is the Euclidean toroidal metric defined as
        \begin{equation}
        \label{eq:def_toroidal_metric}
            \|u - v\| := \sqrt{\sum_{i = 1}^d \Big( \min\big\{|u_i - v_i|, n^{1/d} - |u_i - v_i|\big\} \Big)^2}.
        \end{equation}
        If $\{u, v\} \in E$, we write $u \sim v$. Otherwise, we write $u \nsim v$.
    \end{enumerate}
\end{definition}

We assume the following regularity conditions on $f_\text{in}$ and $f_\text{out}$. 
\begin{assumption}
\label{assump:regularity_conditions}
    Let $r := \inf\{ x > 0: f_\text{in}(t), f_\text{out}(t) = 0  \: \forall t > x \}$. 
    
    \begin{enumerate}[label=(\roman*)]
        \item \label{eq:function_assumption_1} We assume that $r < \infty$.

        \item \label{eq:function_assumption_2} There exists a constant $\xi > 0$ such that
        \begin{equation*}
            \xi < f_\text{in}(t), f_\text{out}(t) < 1 - \xi \: \text{ for } \:  t \leq r.
        \end{equation*}
        
        \item \label{eq:function_assumption_3} The set $\{0 \leq t \leq r: f_\text{in}(t) = f_\text{out}(t)\}$ is finite. That is,
        \begin{equation*}
            \{0 \leq t \leq r \mid f_\text{in}(t) = f_\text{out}(t)\} = \{t_1, t_2, \ldots, t_m\} \: \text{ for } \: t_1 < t_2 < \ldots < t_m.
        \end{equation*}

        \item \label{eq:function_assumption_4} Define the function
        \begin{equation}
        \label{eq:def_gamma}
            \gamma(\epsilon) = \sup \Big\{ \text{dist}(t, \{t_1, t_2, \ldots, t_m\}) : |f_\text{in}(t) - f_\text{out}(t) | \leq \epsilon, 0 \leq t \leq r \Big\}
        \end{equation}
        where 
        \begin{equation*}
            \text{dist}(t, \{t_1, t_2, \ldots, t_m\}) = \min_{i \in [m]}\{|t - t_i| \}.
        \end{equation*}
         That is, $\gamma(\epsilon)$ measures the largest distance between a point $t \in [0,r]$ and the set of intersection points $\{t_1, \dots, t_m\}$, among those points $t$ for which $f_\text{in}(t)$ and $f_\text{out}(t)$ are within $\epsilon$ of each other. Then, we assume that $\gamma(\epsilon) \to 0$ as $\epsilon \to 0$. We remark that this assumption is satisfied when $f_\text{in}$ and $f_\text{out}$ are continuous.
    \end{enumerate}
\end{assumption}

Assumption \ref{eq:function_assumption_1} states that we can only form edges between two vertices $u$ and $v$ which are ``close together'', meaning that $\|u - v\| \leq r (\log n)^{1/d}$. In this case, we say that $u$ \textit{is visible to} $v$, which we denote as $u \visible v$. In addition, we will sometimes be interested in considering all points visible to a vertex $v$. Hence, we define the \textit{neighborhood} of $v$ as $\calN(v) := \{u \in \R^d : \|u-v\| < r(\log n)^{1/d} \}$. Finally, we call this threshold  $r (\log n)^{1/d}$ the \textit{visibility radius}.

To interpret Assumption \ref{eq:function_assumption_4}, note that \eqref{eq:def_gamma} implies
\begin{equation*}
\Big\{ 0 < t < r: |f_\text{in}(t) - f_\text{out}(t)| \leq \epsilon \Big\} \subseteq \bigcup_{i=1}^m \Big[ t_i-\frac{\gamma(\epsilon)}{2}, t_i + \frac{\gamma(\epsilon)}{2} \Big].
\end{equation*}
Therefore, Assumption \ref{eq:function_assumption_4} means that when $\epsilon$ is sufficiently small, then the regions where $|f_\text{in}(t) - f_\text{out}(t)| \leq \epsilon$ are confined within small intervals around $t_1, t_2, \ldots, t_m$. This ensures that $f_\text{in}$ and $f_\text{out}$ are significantly different (i.e. that $|f_\text{in}(t) - f_\text{out}(t)| > \epsilon$) in most of their domain. To motivate this assumption, observe that we must determine the label of vertex $v$ by looking at whether it has an edge to another vertex $u$. For this to give meaningful information, we need $\overline{f}_\text{in}(\|u - v\|)$ and $\overline{f}_\text{out}(\|u-v\|)$ to be significantly different. Thus, Assumption \ref{eq:function_assumption_4} ensures that most other vertices give meaningful information.

Our goal is to achieve exact recovery of the community labels, which we now define. Suppose that $\sigma^*_n$ is the true label of each vertex and $\tilde{\sigma}_n$ is an estimated label of each vertex. We define the agreement of $\tilde{\sigma}_n$ and $\sigma^*_n$ as 
\begin{equation*}
    A(\tilde{\sigma}_n, \sigma^*_n) = \frac{1}{|V|} \max \Big\{\sum_{u \in V} \1_{\tilde{\sigma}_n(u) = \sigma^*_n(u)}, \sum_{u \in V} \1_{\tilde{\sigma}_n(u) = - \sigma^*_n(u)} \Big\},
\end{equation*}
which is the proportion of vertices that $\tilde{\sigma}_n$ labels correctly, up to a global sign flip. Then, we say that $\tilde{\sigma}_n$ achieves
\begin{itemize}
    \item \textit{exact recovery} if $\lim_{n \to \infty} \P(A(\tilde{\sigma}_n, \sigma^*_n) = 1) = 1$,
    \item \textit{almost-exact recovery} if $\lim_{n \to \infty} \P(A(\tilde{\sigma}_n, \sigma^*_n) \geq 1 - \epsilon) = 1$ for all $\epsilon > 0$, and
    \item \textit{partial recovery} if $\lim_{n \to \infty} \P(A(\tilde{\sigma}_n, \sigma^*_n) \geq \alpha) = 1$ for some constant $\alpha > 0$.
\end{itemize}
In other words, exact recovery means that $\tilde{\sigma}_n$ labels every vertex correctly (up to a global sign flip), with probability going to 1 as the graph sizes goes to infinity. We omit the subscript $n$ in the remainder of the paper for clarity.

In this paper, we show that there exists an information-theoretic threshold for $G \sim \text{GSBM}(\lambda, r, n, f_\text{in}, f_\text{out}, d)$, above which there is an efficient algorithm for exact recovery and below which it is impossible to achieve exact recovery. Define the information metric
\begin{equation}
\label{eq:information_metric}
    I(f_\text{in}, f_\text{out}) := \lambda \nu_d r^d \int_0^r \left( 1- \sqrt{f_\text{in}(t) f_\text{out}(t)} - \sqrt{(1-f_\text{in}(t))(1-f_\text{out}(t))} \right) \frac{dt^{d-1}}{r^d} dt
\end{equation}
where $\nu_d$ is the volume of the $d$-dimensional unit sphere.

\begin{theorem}[Achievability]
\label{thm:achievability}
    There exists a polynomial-time aglorithm achieving exact recovery in $G \sim \text{GSBM}(\lambda, r, n, f_\text{in}, f_\text{out}, d)$ when
    \begin{enumerate}
        \item $d = 1$, $\lambda r > 1$, and $I(f_\text{in}, f_\text{out}) > 1$, or
        \item $d \geq 2$ and $I(f_\text{in}, f_\text{out}) > 1$.
    \end{enumerate}
\end{theorem}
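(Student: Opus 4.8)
The plan is to use the standard two-phase strategy for exact recovery: first compute an initial estimate $\hat\sigma$ that is correct on all but a vanishing (indeed polynomially small) fraction of vertices, and then refine $\hat\sigma$ one vertex at a time by a local maximum-likelihood test. The reason $I(f_\text{in},f_\text{out})$ is the governing quantity is that, up to the $\log n$ normalization, it is the large-deviations error exponent of the single-vertex test. Concretely, fix a vertex $v$ with true label $+1$ and suppose all other labels are known. The visible neighbors of $v$ form a marked Poisson process, and testing $\sigma^*(v)=+1$ against $\sigma^*(v)=-1$ amounts to thresholding a sum of independent Bernoulli log-likelihood ratios. By symmetry of the two equal-sized communities the optimal Chernoff tilt is $s=1/2$, and a standard large-deviation bound for additive functionals of a Poisson process gives a misclassification probability bounded by
\[
\exp\!\left(-\lambda\, d\,\nu_d \log n \int_0^r \Big(1-\sqrt{f_\text{in}(t)f_\text{out}(t)}-\sqrt{(1-f_\text{in}(t))(1-f_\text{out}(t))}\Big)t^{d-1}\,dt\right)=n^{-I(f_\text{in},f_\text{out})}.
\]
Since $I>1$, the expected number of single-vertex errors is at most $(1+o(1))\lambda n\cdot n^{-I}=o(1)$, which is what drives exact recovery through a union bound over the $\Theta(n)$ vertices.

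For Phase 1, I would partition $\calS_{d,n}$ into blocks of side length $\Theta((\log n)^{1/d})$, chosen small enough that vertices in adjacent blocks are mutually visible yet large enough that each block contains $\Theta(\log n)$ vertices. Because each block has only $\Theta(\log n)$ vertices, one can recover its bipartition up to a global sign in polynomial time---for instance by maximizing the likelihood over all $2^{\Theta(\log n)}=\mathrm{poly}(n)$ labelings of the block, which succeeds with high probability since $f_\text{in}$ and $f_\text{out}$ differ on all but a small subset of $[0,r]$ by Assumptions \ref{eq:function_assumption_3}--\ref{eq:function_assumption_4}. The per-block signs are then aligned by comparing labels on vertices visible across neighboring blocks and propagating agreement through the block-adjacency graph. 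This alignment step is exactly where the dimension-dependent hypotheses enter: it requires the visibility graph to be connected, which for $d\ge 2$ follows from $I>1$ (which forces $\lambda\nu_d r^d>1$, putting the expected degree above the geometric connectivity threshold), whereas for $d=1$ the density $I>1$ only yields $\lambda r>1/2$, and one genuinely needs the extra hypothesis $\lambda r>1$ to rule out visibility gaps of length exceeding $r\log n$ (each such gap occurring with probability $n^{-\lambda r}$). The output is an estimate $\hat\sigma$ agreeing with $\sigma^*$ up to global sign on a $(1-o(1))$-fraction of vertices, with the stronger guarantee that the misclassified set is sparse---no visibility ball contains more than $o(\log n)$ of them.

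For Phase 2, at each vertex $v$ I would form the log-likelihood ratio $\mathrm{LLR}_v$ between the hypotheses $\sigma^*(v)=\pm1$ using the observed edges from $v$ to its visible neighbors, weighted according to the Phase-1 labels $\hat\sigma$, and set $\tilde\sigma(v)$ to the more likely sign. The analysis proceeds by comparing to the genie test $\mathrm{LLR}_v(\sigma^*)$ that uses the true neighbor labels: the genie bound above shows $\mathrm{LLR}_v(\sigma^*)$ has the correct sign for every vertex simultaneously with probability $1-o(1)$, and each single-edge term is $O(1)$ by Assumption \ref{eq:function_assumption_2} (the $f$'s are bounded away from $0$ and $1$), so replacing $\sigma^*$ by $\hat\sigma$ perturbs $\mathrm{LLR}_v$ by at most $O(k_v)$, where $k_v$ is the number of misclassified neighbors of $v$. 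Hence $\tilde\sigma(v)\ne\sigma^*(v)$ forces $\mathrm{LLR}_v(\sigma^*)<c\,k_v$, and using the Phase-1 sparsity $k_v\le\epsilon\log n$ it suffices to bound $\sum_v \P(\mathrm{LLR}_v(\sigma^*)<c\epsilon\log n)$.

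The main obstacle is precisely this transfer from the genie test to the realized estimator. Two points need care. First, one must show that shifting the threshold from $0$ to $c\epsilon\log n$ does not destroy the exponent: by continuity of the Chernoff rate in the target value, and because $I>1$ strictly, $\epsilon$ can be fixed small enough (depending only on $I-1$) that the shifted exponent still exceeds $1$, so the union bound $\sum_v \P(\mathrm{LLR}_v(\sigma^*)<c\epsilon\log n)\le n\cdot n^{-(1+\Omega(1))}=o(1)$ continues to close. Second, one must decouple $\hat\sigma$ from the edges incident to $v$, since $\mathrm{LLR}_v(\sigma^*)$ and $k_v$ are a priori dependent; this is handled by the block structure, which ensures the label of each neighbor of $v$ is essentially determined by vertices other than $v$, together with the crude global bound $\max_v k_v=o(\log n)$ from Phase 1, so that the per-vertex large deviation $\P(\mathrm{LLR}_v(\sigma^*)<c\epsilon\log n)$ can be evaluated using only the edges at $v$ and the true labels. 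A final, more routine technical point is the large-deviations computation itself over the Poisson point process, where Assumption \ref{eq:function_assumption_4} guarantees that the region $\{t:|f_\text{in}(t)-f_\text{out}(t)|\le\epsilon\}$---on which the integrand nearly vanishes---has negligible measure, so that the exponent is genuinely $I(f_\text{in},f_\text{out})$.
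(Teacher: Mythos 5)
Your overall architecture is the same as the paper's---a Phase I almost-exact labeling built block-by-block, followed by a genie-aided refinement---and your Phase 2 is essentially identical to the paper's \texttt{Refine} analysis: the Chernoff bound at tilt $t=1/2$ over the Poissonized neighborhood (Lemma \ref{lem:refine_Chernoff}), the $O(k_v)$ perturbation bound via Assumption \ref{assump:regularity_conditions}\ref{eq:function_assumption_2}, the shifted threshold $c\epsilon \log n$ with $\epsilon$ chosen using $I>1$, and the uniform bound over all labelings close to $\sigma^*$ (which is the correct resolution of the dependence between $\hat\sigma$ and the edges at $v$; your alternative ``decoupling via block structure'' is not quite right as stated, since in any propagation scheme the labels of $v$'s neighbors can depend on edges incident to $v$, but your global-event argument already suffices). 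Your observation that $I>1$ forces $\lambda \nu_d r^d > 1$ for $d \geq 2$, and that $d=1$ genuinely needs the extra hypothesis $\lambda r > 1$, also matches the paper.

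The genuine gaps are in Phase 1. First, you assume the blocks can be ``chosen \dots large enough that each block contains $\Theta(\log n)$ vertices.'' This is false: with block volume $\chi r^d \log n$ the occupancy is $\mathrm{Pois}(\lambda \chi r^d \log n)$, and since the visibility constraints force $\chi$ small, a polynomial number of blocks among the $\Theta(n/\log n)$ will be nearly empty. Your adjacency-based sign alignment then breaks at empty blocks; this is exactly why the paper works only with $\delta$-occupied blocks, propagates along a spanning tree of the \emph{visibility} graph on occupied blocks (blocks need not be adjacent to be mutually visible), defers unlabeled vertices to Phase 2, and needs the nontrivial connectivity result Proposition \ref{prop:connected_prop} (inherited from \cite{Gaudio+2024}) rather than a naive adjacency argument. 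Second, your claim that brute-force MLE recovers each block's bipartition \emph{exactly} (up to sign), simultaneously over all blocks, is unsupported and in general false near the threshold: the per-block failure probability is only of order $n^{-c\delta}$ for constants $c$ (depending on the separation $\epsilon$ from Assumption \ref{assump:regularity_conditions}\ref{eq:function_assumption_4}) and $\delta$ (the minimum occupancy, which must be taken small for connectivity), with no guarantee that $c\delta > 1$; a union bound over $\Theta(n/\log n)$ blocks therefore does not close. Compare Proposition \ref{prop:initial_block_labeling}, whose error bound $n^{-\delta\epsilon^4/512}\log n$ is adequate for \emph{one} initial block only. The fix is the paper's weaker guarantee: at most a constant number $M$ of mistakes per block, which fails with probability $o(1/n)$ because $M$ independent misclassifications are needed (Proposition \ref{prop:propagation_error_vertex}, Theorem \ref{thm:propagation_error_total}); this weaker guarantee still yields your sparsity bound $k_v = O(1) \leq \epsilon \log n$ and still makes the cross-block sign alignment work. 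As written, though, your Phase 1 both relies on a false occupancy premise and claims a per-block accuracy that the available concentration does not deliver.
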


\begin{theorem}[Impossibility]
\label{thm:impossibility}
    Any estimator fails to achieve exact recovery in $G \sim \text{GSBM}(\lambda, r, n, f_\text{in}, f_\text{out}, d)$ when
    \begin{enumerate}
        \item $d = 1$ and $\lambda r < 1$, or
        \item $I(f_\text{in}, f_\text{out}) < 1$.
    \end{enumerate}
\end{theorem}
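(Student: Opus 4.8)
The two sufficient conditions call for genuinely different arguments, and the plan is to treat them separately. Condition~1 ($d=1$, $\lambda r < 1$) is a purely geometric obstruction: the visibility graph disconnects, producing an unidentifiable relative labelling between components. Condition~2 ($I < 1$) is the information-theoretic obstruction present in every dimension, and I would establish it by a genie-aided reduction to a single-vertex hypothesis test whose error exponent is exactly $I \log n$. In both cases it suffices to show that the Bayes-optimal estimator fails, since it maximizes $\P(A(\tilde\sigma,\sigma^*)=1)$; because the prior is uniform over labellings and symmetric under a global sign flip, this optimal estimator outputs a maximum-likelihood labelling (modulo the flip).

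For Condition~1, recall that for $d=1$ the visibility radius equals $r\log n$, and the vertices form a Poisson process of intensity $\lambda$ on a circle of circumference $n$, so consecutive spacings are asymptotically i.i.d.\ $\mathrm{Exp}(\lambda)$. In one dimension an edge crosses a spacing only if that spacing is at most $r\log n$, so any larger spacing blocks all edges across it. A given spacing exceeds $r\log n$ with probability $e^{-\lambda r \log n} = n^{-\lambda r}$, so the expected number of such large spacings is $\Theta(n^{1-\lambda r})$, which diverges precisely because $\lambda r < 1$. A Poisson/second-moment argument then yields at least two large spacings with probability tending to $1$, so the visibility graph splits into $k \ge 2$ connected components. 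The likelihood is invariant under a global flip of any single component, giving $2^{k-1} \ge 2$ equally likely labellings modulo the global flip; hence the optimal estimator succeeds with probability at most $\tfrac12$, so $\P(A = 1) \le \tfrac12 + o(1)$ and exact recovery is impossible.

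For Condition~2, the optimal estimator fails whenever some labelling inequivalent to $\sigma^*$ has likelihood at least that of $\sigma^*$; flipping a single vertex $v$ supplies such a candidate, so
\[
\P(\text{fail}) \;\ge\; \P\big(\exists\, v:\ L_v \le 0\big),
\]
where $L_v$ is the log-likelihood ratio between $\sigma^*$ and $\sigma^*$ with $v$ flipped, a sum of independent Bernoulli log-ratios over the $\Theta(\log n)$ vertices visible to $v$. The edge to a visible vertex $u$ at distance $\rho$ is $\mathrm{Bern}(\overline f_\text{in}(\rho))$ or $\mathrm{Bern}(\overline f_\text{out}(\rho))$ according to $\sigma^*(u)$, and flipping $v$ interchanges the two. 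By Slivnyak's theorem the neighbourhood of $v$ is again a Poisson process, and the toroidal symmetry gives every vertex a full spherical neighbourhood. Averaging the tilted per-edge term over the $\pm 1$ label of $u$ gives a function $g_t$ satisfying $g_t = g_{1-t}$ and convex in $t$, so the Chernoff-optimal tilt is $t = \tfrac12$; evaluating the Laplace functional of the Poisson neighbourhood after the substitution $\rho = s(\log n)^{1/d}$ yields exactly $\E[e^{-L_v/2}] = n^{-I}$. Markov's inequality then gives $\P(L_v \le 0) \le n^{-I}$, and crucially a matching large-deviations lower bound gives $\P(L_v \le 0) \ge n^{-I - o(1)}$.

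To conclude, I would select $\Theta(n/\log n)$ reference vertices with pairwise disjoint neighbourhoods — tiling $\calS_{d,n}$ into cells of side a constant multiple of the visibility radius and keeping a separated sub-lattice of cells, so that the events $\{L_v \le 0\}$ are independent by the independence properties of the Poisson process. Then
\[
\P\big(\forall\, v:\ L_v > 0\big) \;\le\; \big(1 - n^{-I - o(1)}\big)^{\Theta(n/\log n)} \;\le\; \exp\!\big(-\Theta(n^{1-I-o(1)}/\log n)\big) \;\longrightarrow\; 0
\]
whenever $I < 1$, so a bad vertex exists with high probability and exact recovery is impossible. The main obstacle is the sharp lower bound $\P(L_v \le 0) \ge n^{-I - o(1)}$: since $L_v$ is a sum over a Poisson number of non-identically distributed terms, matching the Chernoff exponent requires a Bahadur--Rao / change-of-measure argument (tilting at $t = \tfrac12$ so that $L_v$ is centred at the decision boundary, the polynomial $1/\sqrt{\log n}$ correction being absorbed into $n^{-o(1)}$). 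Here one uses the boundedness in Assumption~\ref{assump:regularity_conditions} (that $f_\text{in}, f_\text{out} \in (\xi, 1-\xi)$) to keep the log-ratios and their cumulants controlled, and the condition $\gamma(\epsilon) \to 0$ to ensure that vertices near the crossing points $t_1, \dots, t_m$, where $\overline f_\text{in} \approx \overline f_\text{out}$ carry almost no information, contribute negligibly and do not spoil the exponent.
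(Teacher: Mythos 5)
Your proposal is correct in outline, but it reaches the result by a genuinely different route than the paper on both halves. For Condition~1 the paper does not argue via spacings directly: it cites the known connectivity threshold for one-dimensional RGGs under the Euclidean metric (Lemma~\ref{lem:1d_connectivity_threshold_Euclidean}) and transfers it to the toroidal metric by a doubling trick (adjoining two copies of the interval so that the circle is cut in at least two places), whereas your exponential-spacings computation with a second-moment count of gaps exceeding $r\log n$ is a self-contained substitute that correctly notes the circle needs \emph{two} large gaps to disconnect. For Condition~2 the paper splits into two regimes: when $\lambda\nu_d r^d<1$ (and $d\ge2$) it invokes Penrose's theorem to disconnect the vertex visibility graph outright, and when $\lambda\nu_d r^d\ge1$ it verifies the two conditions of the Abbe--Baccelli--Sankararaman criterion (Lemma~\ref{lem:impossibility_lemma}): the first-moment condition via Cram\'er's theorem applied to $\P(\sum_{i=1}^m W_i\ge0)$ conditionally on $N(0)=m$ and then integrated against the Poisson law of $N(0)$ (Lemma~\ref{lem:impossibility_lemma_1}), and the second-moment ratio condition by splitting the Palm integral at distance $2r(\log n)^{1/d}$ (Lemma~\ref{lem:impossibility_lemma_2}). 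You instead replace the Palm second-moment machinery with an explicit construction of $\Theta(n/\log n)$ reference vertices in well-separated cells whose neighborhoods are disjoint, so the flip-bad events are genuinely independent and a product bound closes the argument; this is more elementary and transparent, at the cost of having to carry out the cell selection carefully (non-empty cells, measurability with respect to disjoint regions, uniformity of the per-vertex bound over locations). Your identification of the sharp exponent is right: $\E[e^{-L_v/2}]=n^{-I}$ matches the paper's \eqref{eq:MGF_Z/2}, and the matching lower bound $\P(L_v\le0)\ge n^{-I-o(1)}$ is exactly what the paper's Cram\'er-plus-Poisson-MGF computation delivers, since $e^{-\Lambda_W^*(0)}=M_W(1/2)$ gives $\lambda\nu_d r^d\bigl(1-e^{-\Lambda_W^*(0)}\bigr)=I$; note you do not actually need Bahadur--Rao, as conditioning on $N(0)=m$ reduces to i.i.d.\ sums where Cram\'er's lower bound suffices. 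One point to make explicit if you pursue your unified (non-case-split) version: the paper uses $\lambda\nu_d r^d\ge1$ only to control the remainder $\P(N(0)<\delta\log n)$ after truncating the sum over $m$ at $\delta\log n$; in your scheme you should truncate at a large constant $M$ instead, so that the dropped terms contribute $O(n^{-\lambda\nu_d r^d+o(1)})=o(n^{-I})$ (using $I<\lambda\nu_d r^d$ strictly, which follows from Assumption~\ref{assump:regularity_conditions}\ref{eq:function_assumption_2}), and the small-intensity regime needs no separate Penrose argument. Finally, be slightly careful with ties: on the event $L_v=0$ the MLE may still succeed under a favorable tie-break, so the failure probability you extract is $1/2-o(1)$ rather than $1-o(1)$; this is still amply sufficient to contradict exact recovery.
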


From Theorems \ref{thm:achievability} and \ref{thm:impossibility}, we obtain the information-theoretic threshold for exact recovery in $G \sim \text{GSBM}(\lambda, r, n, f_\text{in}, f_\text{out}, d)$, when $f_\text{in}$ and $f_\text{out}$ satisfy Assumption \ref{assump:regularity_conditions}.

We remark that we can express the information metric (\ref{eq:information_metric}) in terms of a CH-divergence as defined in \eqref{eq:CH_divergence_discrete}. Let $p(y) = (p_1(\cdot;y), p_2(\cdot;y))$ and $q(y) = (q_1(\cdot;y), q_2(\cdot;y))$ be the probability mass functions of Bernoulli random variables which correspond to edge probabilities conditioned on community assignments and distance, where $p_1(\cdot; y), q_2(\cdot; y) \sim \text{Bern}(f_\text{in}(y))$ and $p_2(\cdot; y), q_1(\cdot; y) \sim \text{Bern}(f_\text{out}(y))$ for $y \in [0, r]$. Furthermore, let $\pi = (\pi_1, \pi_2)$ where $\pi_1 = \pi_2 = 1/2$ and $g(y) = dy^{d-1}/r^d$, which is a density on $[0, r]$. Then, we can write the information metric (\ref{eq:information_metric}) as 
\begin{equation*}
    I(f_\text{in}, f_\text{out}) = \lambda \nu _d r^d D_+(p \Vert q; \pi, g).
\end{equation*}

Intuitively, the only information we have to distinguish between two communities are their edge probabilities to each community, which we can think of as their ``behavior''. Thus, the CH-divergence between $p$ and $q$ measures how different the two communities are based on their edge probabilities to each community. For exact recovery to be possible, the two communities must have sufficiently different behavior for them to be distinguishable based on the edge observations, which gives rise to the CH-divergence term in the information-theoretic threshold. We remark that when $f_\text{in} = f_\text{out}$, then the behavior is identical between the two communities; this is reflected in the CH-divergence as $D_+(p \Vert q; \pi, g) = 0$ in this case. We also see that when $f_\text{in}$ and $f_\text{out}$ are close to each other, then the CH-divergence is small, which means that a larger value of $\lambda$ (i.e. more vertices) is required to achieve exact recovery.

We also note that our notion of CH divergence given in \eqref{eq:CH-formula} can be used to characterize the earlier result of Abbe and Sandon \cite{AbbeSandon2015} for the standard SBM; here we focus on the symmetric SBM with two communities, as it is analogous to the present setting. 
The symmetric SBM with two communities (in the logarithmic degree regime) is defined as taking $n$ vertices, assigning each vertex as community $+1$ with probability $\pi_1$ and community $-1$ with probability $\pi_{-1}$, then forming an edge between each pair of vertices with probability $\frac{a \log n}{n}$ if the two vertices are from the same community or with probability $\frac{b \log n}{n}$ if the two vertices are from different communities.  Abbe and Sandon \cite{AbbeSandon2015} showed that the IT threshold for exact recovery is
\[\sup_{t \in [0,1]} \pi_1\left(ta + (1-t)b - a^t b^{1-t}\right) +\pi_{-1}\left(tb + (1-t)a - b^t a^{1-t}\right)  = 1, \]
where the left hand side of the above expression is referred to as a CH divergence.
Alternatively, the IT threshold can be stated as $\frac{n}{\log n} D_+(p \Vert q; \pi) = 1$ where $p = \left( \frac{a \log n}{n}, 1 - \frac{a \log n}{n}\right)$ and $q = \left( \frac{b \log n}{n}, 1 - \frac{b \log n}{n}\right)$.\footnote{See also \cite{Dreveton2023} for another characterization of the IT threshold for the standard SBM in terms of R\'enyi divergences.} To see this, observe that the definition yields
\begin{align*}
\frac{n}{\log n} D_+(p \Vert q; \pi) &= \frac{n}{\log n} \left\{1 - \inf_{t \in [0,1]} \left[\pi_1 \left[\left(\frac{a \log n}{n}\right)^t \left(\frac{b \log n}{n}\right)^{1-t} + \left(1-\frac{a \log n}{n}\right)^t \left(1-\frac{b \log n}{n}\right)^{1-t} \right] \right. \right.\\
& \quad \quad \quad \quad \quad \quad \quad \quad    \left.\left. +\pi_{-1} \left[\left(\frac{b \log n}{n}\right)^t \left(\frac{a \log n}{n}\right)^{1-t} + \left(1-\frac{b \log n}{n}\right)^t \left(1-\frac{a \log n}{n}\right)^{1-t} \right] \right] \right\}.
\end{align*}
Since $\left(1 - \frac{x \log n}{n}\right)^y = 1 - \frac{xy \log n}{n} + o\left( \frac{\log n}{n}\right)$ for $x, y > 0$, we have
\begin{align*}
\frac{n}{\log n} D_+(p \Vert q; \pi) &= \frac{n}{\log n} \left\{1 - \inf_{t \in [0,1]} \left[\pi_1 \left[\left(\frac{a \log n}{n}\right)^t \left(\frac{b \log n}{n}\right)^{1-t} +1 -\frac{at \log n}{n} - \frac{(1-t)b \log n}{n} + o \left(\frac{\log n}{n}\right)\right] \right. \right.\\
& \quad \quad \quad \quad \quad \quad \quad \quad    \left.\left. +\pi_{-1} \left[\left(\frac{b \log n}{n}\right)^t \left(\frac{a \log n}{n}\right)^{1-t}  +1 -\frac{tb \log n}{n} - \frac{(1-t)a \log n}{n} + o \left(\frac{\log n}{n}\right) \right] \right] \right\}\\
&= \sup_{t \in [0,1]} \pi_1 \left[ta + (1-t)b -a^tb^{1-t}  \right] + \pi_{-1} \left[tb + (1-t)a -b^ta^{1-t}  \right] + o\left(1\right).
\end{align*}

\section{Exact Recovery Algorithm} 
\label{sec:exact_recovery_algorithm}
This section presents our algorithm to achieve exact recovery, Algorithm \ref{alg:exact_recovery}. We will use a two-phase approach for exact recovery, adapting the procedure developed in \cite{Gaudio+2024}. The first phase constructs an estimator $\hat{\sigma}$ that achieves almost-exact recovery. We start by partitioning $\calS_{d, n}$ into $\floor{n / (r^d \chi \log n)}$ cubes of volume $r^d \chi \log n$ for an appropriate constant $\chi > 0$, which we call \textit{blocks}. For convenience, we will omit the floor function in the rest of the paper when discussing the number of blocks. Then, we label the vertices block-by-block for all blocks with at least $\delta \log n$ vertices, where $\delta > 0$ is a suitable constant. We first label the vertices in an initial block, which is done using \texttt{Pairwise Classify} (Algorithm \ref{alg:pairwise_classify}). Then, we use the labeled vertices in one block to estimate the labels of the vertices in another block using the procedure described in \texttt{Propagate} (Algorithm \ref{alg:propagate}), until all blocks with at least $\delta \log n$ vertices are labeled. The second phase then uses the almost-exact estimator $\hat{\sigma}$ to produce an estimator $\tilde{\sigma}$ which achieves exact recovery, revising the label of each vertex using \texttt{Refine} (Algorithm \ref{alg:refine}).

To make this notion of labeling vertices block-by-block precise, we use the following definitions from \cite{Gaudio+2024}. Let $B_i$ be the $i$th block and $V_i$ be the set of vertices in $B_i$ for $i \in [n/(r^d \chi \log n)]$. 

\begin{definition}[Occupied Block]
    Given $\delta > 0$, we call a block $B_i$ $\delta$-occupied if $B_i$ contains at least $\delta \log n$ vertices. Otherwise, we call $B_i$ $\delta$-unoccupied.
\end{definition}

\begin{definition}[Visible Blocks]
    We call a pair of blocks $B_i$ and $B_j$ mutually visible, which we denote $B_i \visible B_j$, if
    \begin{equation*}
        \sup_{x \in B_i, y \in B_j} \|x - y\| \leq r(\log n)^{1/d}.
    \end{equation*}
\end{definition}

\begin{definition}[Block Visibility Graph]
    Suppose that we have a graph $G = (V,E)$ on $\calS_{d, n}$ and a partition of $\calS_{d, n}$ into blocks of volume $v(n)$. We define the $(v(n), c(n))$-visibility graph of $G$ as the graph $H = (V^\dagger, E^\dagger)$, where $V^\dagger := \{i \in [n / v(n)] : |V_i| \geq c(n)\}$ is the set of all blocks with at least $c(n)$ vertices and $E^\dagger := \{\{i, j\}: i, j \in V^\dagger, B_i \visible B_j\}$ consists of all pairs of blocks in $V^\dagger$ which are mutually visible. We will also call $H$ the block visibility graph of $G$.
\end{definition}

From these definitions, we see that Phase I attempts to label all blocks which are $\delta$-occupied. To accomplish this, we first construct the $(r^d \chi \log n, \delta \log n)$-visibility graph of $G$, which we call $H$. If $H$ is connected, then we can find a spanning tree of $H$, which gives us the following strategy for labeling all $\delta$-occupied blocks. First, we label the block at the root of the tree, which we call $B_{i_1}$, using \texttt{Pairwise Classify}. Then, we can estimate the labels of its child blocks using \texttt{Propagate}, and so on until all $\delta$-occupied blocks are labeled. The conditions $\lambda r > 1$ (when $d = 1$) and $\lambda \nu_d r^d > 1$ (when $d \geq 2$) ensure that $H$ is connected with high probability, which allows for this propagation scheme to work.

\begin{algorithm}
\caption{\texttt{Exact Recovery}}
\label{alg:exact_recovery}
\begin{algorithmic}[1]
\Require $G \sim \text{GSBM}(\lambda, r, n, f_\text{in}, f_\text{out}, d)$
\Ensure An estimated community labeling $\tilde{\sigma}: V \to \{+1, -1\}$
\State \textbf{Phase I:}
\State Take $\chi, \delta > 0$ which satisfy \eqref{eq:chi_delta_conditions_d=1} if $d = 1$ or \eqref{eq:chi_delta_conditions_d>=2} if $d \geq 2$. \label{line:set_chi_delta}
\State Partition $S_{d, n}$ into $n/(r^d \chi \log n)$ blocks of volume $r^d \chi \log n$. Let $B_i$ be the $i$th block and $V_i$ be the set of vertices in $B_i$ for $i \in [n/(r^d \chi \log n)]$. \label{line:create_blocks}
\State Construct the $(r^d \chi \log n, \delta \log n)$-visibility graph $H = (V^\dagger, E^\dagger)$ of $G$. \label{line:visibility_graph}
\If{$H$ is disconnected}
    \State Return \texttt{FAIL}
\EndIf
\State Find a rooted spanning tree of $H$, ordering $V^\dagger = \{i_1, i_2, \ldots, i_{|V^\dagger|}\}$ in breadth-first order.
\State Apply \texttt{Pairwise Classify} (Algorithm \ref{alg:pairwise_classify}) on input $(G, V_{i_1}, f_\text{in}, f_\text{out})$ to obtain a labeling $\hat{\sigma}$ of $V_{i_1}$. \label{line:pairwise_classify}
\State Choose $\epsilon > 0$ which satisfies Lemma \ref{lem:distinguishing_block_lemma}.
\For{$j = 2, \ldots, |V^\dagger|$} \label{line:interate_propagate}
    \State Apply \texttt{Propagate} (Algorithm \ref{alg:propagate}) on input $(G, V_{p(i_j)}, V_{i_j}, f_\text{in}, f_\text{out})$ to obtain a labeling $\hat{\sigma}$ of $V_{i_j}$. \label{line:propagate}
\EndFor
\For{$v \in V \setminus \cup_{i \in V^\dagger} V_i$}
    \State Set $\hat{\sigma}(v) = 0$.
\EndFor
\State \textbf{Phase II:}
\For{$v \in V$} \label{line:iterate_refine}
    \State Apply \texttt{Refine} (Algorithm \ref{alg:refine}) on input $(G, \hat{\sigma}, v)$ to compute $\tilde{\sigma}(v)$. \label{line:refine}
\EndFor
\end{algorithmic}
\end{algorithm}

\begin{algorithm}
\caption{\texttt{Pairwise Classify}} 
\label{alg:pairwise_classify}
\begin{algorithmic}[1]
\Require Graph $G = (V, E)$, vertex set $S \subset V$, functions $f_\text{in}, f_\text{out}$ satisfying Assumption \ref{assump:regularity_conditions}.
\State Choose an arbitrary vertex $u_0 \in S$, set $\hat{\sigma}(u_0) = +1$.
\For{$v \in S \setminus \{u_0\}$}
    \State Compute $X_v$ using (\ref{eq:def_Xv}).
    \State Set $\hat{\sigma}(v) = \sign(X_v)$.
\EndFor
\end{algorithmic}
\end{algorithm}

\begin{algorithm}
\caption{\texttt{Propagate}}
\label{alg:propagate}
\begin{algorithmic}[1]
\Require Graph $G = (V, E)$, disjoint sets of vertices $S, S' \subset V$ which are mutually visible, where $S$ is labeled according to $\hat{\sigma}$, fixed $\epsilon > 0$.
\If{$|\{u \in S: \hat{\sigma}(u) = +1, u \dist v \}| \geq |\{u \in S: \hat{\sigma}(u) = -1, u \dist v\}|$}
    \For{$v \in S'$}
        \State Compute $Y_v$ using (\ref{eq:def_Yv_more_positive}).
        \State Set $\hat{\sigma}(v) = \sign(Y_v)$.
    \EndFor
\Else
    \For{$v \in S'$}
        \State Compute $Y_v$ using (\ref{eq:def_Yv_more_negative}).
        \State Set $\hat{\sigma}(v) = \sign(Y_v)$.
    \EndFor
\EndIf
\end{algorithmic}
\end{algorithm}

\begin{algorithm}
\caption{\texttt{Refine}}
\label{alg:refine}
\begin{algorithmic}[1]
\Require Graph $G \sim \text{GSBM}(\lambda, r, n, f_\text{in}, f_\text{out}, d)$, vertex $v \in V$, labeling $\hat{\sigma}: V \to \{-1, 0 ,+1\}$.
\Ensure An estimated labeling $\tilde{\sigma}(v) \in \{-1, +1\}$.
\State Compute $\tau(v, \hat{\sigma})$ using (\ref{eq:def_tau}).
\State Set $\tilde{\sigma}(v) = \sign(\tau(v, \hat{\sigma}))$.
\end{algorithmic}
\end{algorithm}

\subsection{The Algorithm}
We now describe the main components of Algorithm \ref{alg:exact_recovery}. 

The first step is labeling the initial block $B_{i_1}$ using \texttt{Pairwise Classify}. Let $V_{i_1}$ be the vertices in $B_{i_1}$. Since the model is symmetric, we only need to find the correct partition of $V_{i_1}$ into two communities. Therefore, we can take an arbitrary vertex $u_0 \in V_{i_1}$ and set $\hat{\sigma}(u_0) = +1$.  Now, let $L_{i_1}$ denote the locations of the vertices $V_{i_1}$ and consider a vertex $v \neq u_0$ in $V_{i_1}$. To label $v$, we compute 
\begin{equation}
\label{eq:def_Xv}
    X_v := \sum_{u \in V_{i_1} \setminus\{u_0, v\}} \alpha_u \Big(\mathbbm{1}\{u \sim u_0, u \sim v\} - \mathbb{P}(u \sim u_0, u \sim v \mid  L_{i_1} )\Big),
\end{equation}
where $\alpha_u$ is a sign correction defined as
\begin{equation*}
    \alpha_u := \sign\Big[ \mathbb{P}\left(u \sim u_0, u \sim v \mid \sigma^*(v) = \sigma^*(u_0), L_{i_1} \right) - \mathbb{P}\left(u \sim u_0, u \sim v \mid \sigma^*(v) \neq \sigma^*(u_0), L_{i_1} \right)\Big].
\end{equation*}
Then, we set the estimated label for $v$ as 
\begin{equation}
\label{eq:sigma_hat_initial}
\hat{\sigma}(v) = \sign (X_v).
\end{equation}
Intuitively, $X_v$ measures how much evidence we have for the label $v$ being $+1$ or $-1$. First, $\alpha_u$ determines whether the existence of a common neighbor $u$ of both $u_0$ and $v$ gives evidence for $v$ being in the same community as $u_0$ (in which case $\alpha_u = 1$) or $v$ being in a different community than $u_0$ (in which case $\alpha_u = -1$), depending on the likelihoods of the two cases. Then, $X_v$ measures the amount of evidence we have by comparing the observed number of common neighbors with the expected number of common neighbors (conditioned on the vertex locations). We will later show that the conditional expectation $\E[X_v \mid \sigma^*(v)]$ is positive when $\sigma^*(v) = \sigma^*(u_0)$ and negative when $\sigma^*(v) \neq \sigma^*(u_0)$. Hence, the label of $v$ can be inferred from the sign of $X_v$.

Next, we discuss \texttt{Propagate}, which labels a block $B_i$ with vertices $V_i$ using the estimated labels of its parent block $B_{p(i)}$ with vertices $V_{p(i)}$. We first need the following definition.
\begin{definition}[Distinguishing Point]
    We say that a point $u$ $\epsilon$-distinguishes a point $v$, which we denote $u \dist v$, if 
    \begin{equation}
        \left| \overline{f}_\text{in}(\|u-v\|) - \overline{f}_\text{out}(\|u-v\|) \right| > \epsilon.
    \end{equation}
    Furthermore, we say that a point $u$ $\epsilon$-distinguishes a set of points $S$ if $u \dist v$ for all $v \in S$.
\end{definition}
To motivate this definition, suppose that we know the community label of $u$ and want to find the community label of $v$. Then, we can examine whether $v$ has an edge with $u$ and determine if that gives evidence for $v$ being in the same or opposite community compared to $u$. We can do this by comparing the likelihoods of the two cases, which are given by $\overline{f}_\text{in}(\|u-v\|)$ and $\overline{f}_\text{out}(\|u-v\|)$. However, this requires that $\overline{f}_\text{in}(\|u-v\|)$ and $\overline{f}_\text{out}(\|u-v\|)$ be sufficiently different---that is, $u$ $\epsilon$-distinguishes $v$ for some suitable value of $\epsilon$.

Now, we discuss how to label each vertex in $V_i$. Let $\epsilon > 0$. If 
\begin{equation*}
    \Big|\{u \in V_{p(i)}: \hat{\sigma}(u) = +1, u \dist v \}\Big| \geq \Big|\{u \in V_{p(i)}: \hat{\sigma}(u) = -1, u \dist v\}\Big|,
\end{equation*}
meaning that $\hat{\sigma}$ labels more vertices in $V_p(i)$ which $\epsilon$-distinguish $v$ as $+1$ than $-1$, then we compute 
\begin{equation}
\label{eq:def_Yv_more_positive}
    Y_v := \sum_{u \in V_{p(i)}: \hat{\sigma}(u) = +1, \text{$u \dist v$}} \beta_u \Big( \1(u \sim v) - \P(u \sim v \mid L_{p(i)}, L_i) \Big),
\end{equation}
where $L_{p(i)}$ and $L_i$ denote the locations of the vertices in $V_{p(i)}$ and $V_i$, and $\beta_u$ is a sign correction defined as
\begin{equation*}
\begin{aligned}
    \beta_u &:= \sign \bigg[ \P(u \sim v \mid L_{p(i)}, L_i, \sigma^*(u) = \sigma^*(v)) - \P(u \sim v \mid L_{p(i)}, L_i, \sigma^*(u) \neq \sigma^*(v)) \bigg] \\
    &= \sign \bigg[ \overline{f}_\text{in}(\|u-v\|) - \overline{f}_\text{out}(\|u-v\|) \bigg].
\end{aligned}
\end{equation*}
On the other hand, if 
\begin{equation*}
    \Big|\{u \in V_{p(i)}: \hat{\sigma}(u) = +1, u \dist v \}| < |\{u \in V_{p(i)}: \hat{\sigma}(u) = -1, u \dist v\}\Big|,
\end{equation*}
meaning that $\hat{\sigma}$ labels more vertices in $V_{p(i)}$ which $\epsilon$-distinguish $v$ as $-1$ than $+1$, then we compute
\begin{equation}
\label{eq:def_Yv_more_negative}
    Y_v := \sum_{u \in V_{p(i)} : \hat{\sigma}(u) = -1, \text{$u \dist v$}} \beta_u \Big( \1(u \sim v) - \P(u \sim v \mid L_{p(i)}, L_i) \Big),
\end{equation}
where
\begin{equation*}
\begin{aligned}
    \beta_u &:= \sign \bigg[ \P(u \sim v \mid L_{p(i)}, L_i, \sigma^*(u) \neq \sigma^*(v)) - \P(u \sim v \mid L_{p(i)}, L_i, \sigma^*(u) = \sigma^*(v)) \bigg] \\
    &= \sign \bigg[ \overline{f}_\text{out}(\|u-v\|) - \overline{f}_\text{in}(\|u-v\|) \bigg].
\end{aligned}
\end{equation*}
Then, we set the estimated label for $v$ as
\begin{equation}
\label{eq:sigma_hat_propagate}
    \hat{\sigma}(v) = \sign(Y_v)
\end{equation}
We will provide some intuition for $Y_v$. Essentially, we look at whether there is an edge between $u$ and $v$, then determine whether that gives evidence for $v$ having the same or opposite community label as $u$. Suppose for now that $\hat{\sigma}$ labels more vertices in $V_{p(i)}$ as $+1$ than $-1$, so we compute $Y_v$ using (\ref{eq:def_Yv_more_positive}). First, $\beta_u$ measures whether the likelihood of having an edge is higher when $u$ and $v$ have the same community label (in which case $\beta_u = +1$) or the opposite community label (in which case $\beta_u = -1$). If $\beta_u = +1$, then having an edge suggests that $u$ and $v$ are in the same community because the likelihood is higher. On the other hand, if $\beta_u = -1$, then having an edge would suggest $u$ and $v$ are in opposite communities. Then, $Y_v$ measures the amount of evidence we have by comparing the observed number of edges with the expected number of edges (conditioned on the vertex locations). Like with $X_v$, we will later show that the conditional expectation $\E[Y_v \mid \sigma^*(v)]$ is positive when $\sigma^*(v) = \sigma^*(u_0)$ and negative $\sigma^*(v) \neq \sigma^*(u_0)$ under certain conditions which hold with high probability. Hence, the sign of $Y_v$ can be used to infer the label of $v$.

Once we have the Phase I labeling $\hat{\sigma}$, we construct our exact estimator $\tilde{\sigma}$ by applying \texttt{Refine} on each vertex. First, we define
\begin{equation}
\label{eq:def_tau}
\begin{aligned}
    \tau(v, \sigma) := & \sum_{u \in V: \sigma(u) = +1, u \visible v} \left( \log\bigg(\frac{\overline{f}_\text{in}(\|u-v\|)}{\overline{f}_\text{out}(\|u-v\|)}\bigg) \1(u \sim v) + \log\bigg(\frac{1 - \overline{f}_\text{in}(\|u-v\|)}{1 - \overline{f}_\text{out}(\|u-v\|)}\bigg)\1(u \nsim v) \right) \\ 
    &\quad - \sum_{u \in V: \sigma(u) = -1, u \visible v} \left( \log\bigg(\frac{\overline{f}_\text{in}(\|u-v\|)}{\overline{f}_\text{out}(\|u-v\|)}\bigg) \1(u \sim v) + \log\bigg(\frac{1 - \overline{f}_\text{in}(\|u-v\|)}{1 - \overline{f}_\text{out}(\|u-v\|)}\bigg)\1(u \nsim v) \right).
\end{aligned}
\end{equation}
Then, for any given vertex $v$, we set the Phase II label as
\begin{equation}
\label{eq:exact_label}
    \tilde{\sigma}(v) = \sign(\tau(v, \hat{\sigma})).
\end{equation}
Intuitively, $\tau(v, \hat{\sigma})$ computes the likelihood ratio of $v$ having label $+1$ and $v$ having label $-1$, treating the Phase I labeling $\hat{\sigma}$ as the ground truth. Note that $\tau(v, \sigma) > 0$ if the likelihood of $\sigma(v) = +1$ is higher and $\tau(v, \sigma) < 0$ if the likelihood of $\sigma(v) = -1$ is higher. Therefore, we are just taking the label with the higher likelihood. This procedure is inspired by the genie-aided estimator, which labels a vertex $v$ knowing the labels of all other vertices. 

Now, we examine the runtime of Algorithm \ref{alg:exact_recovery}. The analysis is similar to that of Algorithm 5 in \cite{Gaudio+2024}. First, observe that the number of edges is $\Theta(n \log n)$ with high probability because the degree of each vertex is logarithmic. Then, the block visibility graph $H = (V^\dagger, E^\dagger)$ can be constructed in $O(n / \log n)$ time because $|V^\dagger| = O(n / \log n)$ since there are $n / (r \log n)$ total blocks, and $|E^\dagger| = O(n / \log n)$ as well since there are a constant number of blocks visible to any given block. If $H$ is connected, a spanning tree can be found in $O(|E^\dagger| \log |E^\dagger|)$ time via Kruskal's algorithm, which translates into $O(n \log n)$ time. \texttt{Pairwise Classify} iterates over all edges in the first block to compute $X_v$ for each vertex, which means that the runtime is $O(\log^2 n)$. \texttt{Propagate} iterates over all edges between two blocks to compute $Y_v$ for each vertex, which yields a runtime of $O(\log^2 n)$ for each block. Then, since there are $O(n / \log n)$ blocks in total, the overall runtime to complete the Phase I labeling is $O(n \log n)$. Finally, the Phase II labeling takes $O(n \log n)$ total time because for any given vertex $v$, \texttt{Refine} iterates over all vertices in the neighborhood of $v$, of which there are $O(\log n)$. Therefore, the overall runtime of Algorithm \ref{alg:exact_recovery} is $O(n \log n)$, which is linear in the number of edges.

\subsection{Proof Outline}
We now outline the proof of Theorem \ref{thm:achievability}, showing that the estimator $\tilde{\sigma}$ produced by Algorithm \ref{alg:exact_recovery} achieves exact recovery with high probability. First, we show that the $(r^d \chi \log n, \delta \log n)$-visibility graph of $G$ constructed in Line \ref{line:visibility_graph} of Algorithm \ref{alg:exact_recovery} is connected for suitably small $\chi, \delta > 0$, allowing us to use the proposed propagation scheme to label each block. Next, we show that \texttt{Pairwise Classify} labels all vertices in the initial block $V_{i_1}$ correctly and that \texttt{Propagate} labels the remaining $\delta$-occupied blocks with at most $M$ mistakes in each block, for a suitable constant $M$. Hence, we obtain that the estimator $\hat{\sigma}$ produced by Phase I of Algorithm \ref{alg:exact_recovery} achieves almost-exact recovery. Finally, we show that \texttt{Refine} labels any given vertex incorrectly with probability $o(1/n)$, which allows us to conclude that $\tilde{\sigma}$ labels all vertices correctly with probability $o(1)$ via the union bound.

The connectivity of the $(r^d \chi \log n, \delta \log n)$-visibility graph of $G$ is established by a somewhat technical reduction to the $r = 1$ case, which itself is handled in \cite{Gaudio+2024}. We can think of the block visibility graph as a coarsening of the vertex visibility graph, whose connectivity is required for exact recovery. To see this, observe that when the vertex visibility graph is disconnected, it is impossible to determine relative community labels across components, due to the symmetry in the model.

The next step is showing that all vertices in the initial block $V_{i_1}$ are labeled correctly by \texttt{Pairwise Classify} with high probability. To accomplish this, we upper-bound the probability of making a mistake on any given vertex $v \in V_{i_1}$---which are $\P(X_v \leq 0 \mid \sigma^*(v) = \sigma^*(u_0))$ and $\P(X_v \geq 0 \mid \sigma^*(v) \neq \sigma^*(u_0))$---using Hoeffding's inequality. Here, we require Assumptions \ref{eq:function_assumption_3} and \ref{eq:function_assumption_4} to ensure that the expectations $\E[X_v \mid \sigma^*(v) = \sigma^*(u_0)]$ and $\E[X_v \mid \sigma^*(v) \neq \sigma^*(u_0)]$ are bounded away from zero. We can then apply the union bound to show that all vertices are labeled correctly with high probability.

Then, we show that \texttt{Propagate} makes at most $M$ mistakes in the remaining $\delta$-occupied blocks. Suppose that we want to label the vertices $V_i$ in block $B_i$ using the labeled vertices $V_{p(i)}$ in the parent block $B_{p(i)}$. We first show that for any vertex $v \in V_i$, there are a large number of vertices in $V_{p(i)}$ which $\epsilon$-distinguish $v$ for an appropriate constant $\epsilon$ (Lemma \ref{lem:distinguishing_block_lemma}), ensuring that we have sufficiently many vertices for $Y_v$ to be close to its conditional expectation $\E[Y_v \mid \sigma^*(v)]$. Next, we prove that if $V_{p(i)}$ is labeled with at most $M$ mistakes, then $V_i$ is labeled with at most $M$ mistakes as well with probability $1 - o(1/n)$ (Proposition \ref{prop:propagation_error_vertex}). To accomplish this, we bound the probability of making a mistake on any given vertex via Hoeffding's inequality, then apply the union bound; we require Assumptions \ref{eq:function_assumption_3} and \ref{eq:function_assumption_4} here as well. Finally, since $\hat{\sigma}$ makes at most $M$ mistakes on the initial block, we can conclude that all $\delta$-occupied blocks have at most $M$ mistakes with high probability; this final step is formally shown in Theorem \ref{thm:propagation_error_total}.

Finally, we show that \texttt{Refine} labels any given vertex $v \in V$ incorrectly with probability $o(1/n)$. This requires upper-bounding the probability that $\tau(v, \sigma^*(u_0) \hat{\sigma})$ makes an error. First, we will upper-bound the probability that the genie-aided estimator $\tau(v, \sigma^*)$ comes ``close'' to making an error, showing that for any constants $\rho, \eta > 0$ we have $\P(\tau(v, \sigma^*) \geq - \rho \eta \log n \mid \sigma^*(v) = -1) \leq n^{-(I(f_\text{in}, f_\text{out}) - \rho \eta /2)}$ and $\P(\tau(v, \sigma^*) \leq \rho \eta \log n \mid \sigma^*(v) = +1) \leq n^{-(I(f_\text{in}, f_\text{out}) - \rho \eta /2)}$. Then, we show that for any $\sigma$ which differs from $\sigma^*$ by at most $\eta \log n$ vertices in the neighborhood of $v$, we have $|\tau(v, \sigma^*(u_0)\sigma) - \tau(v, \sigma^*)| \leq \rho \eta \log n$ for an appropriate constant $\rho$. Therefore, we can conclude that the probability $\tau(v, \sigma^*(u_0) \sigma)$ makes an error on $v$ for any such $\sigma$ is at most $n^{-(I(f_\text{in}, f_\text{out}) - \rho \eta /2)}.$  Now, we invoke the condition $I(f_\text{in}, f_\text{out}) > 1$ to choose $\eta > 0$ such that $I(f_\text{in}, f_\text{out}) - \rho \eta / 2 > 1$, which implies that the probability $\tau(v, \sigma^*(u_0) \sigma)$ makes an error is $o(1/n)$. Finally, we note that $\hat{\sigma}$ differs from $\sigma^*$ by at most $\eta \log n$ vertices in the neighborhood of $v$ with high probability, so the probability that $\tau(v, \sigma^*(u_0)\hat{\sigma})$ labels $v$ incorrectly is also $o(1/n)$. Applying the union bound then shows that $\tilde{\sigma}$ labels all vertices correctly (up to a global sign flip) with probability $1- o(1)$.

\section{Connectivity of the Block Visibility Graph}
\label{sec:block_size}
In this section, we show that the $(r^d \chi \log n, \delta \log n)$ visibility graph of $G$ is connected with probability $1 - o(1)$ for suitable choices of $\chi$ and $\delta$. We first record some useful concentration inequalities, which will be used throughout the rest of the paper.

\begin{lemma}[Chernoff Bound, Poisson]
\label{lem:poisson_Chernoff}
    Let $X \sim \text{Pois}(\mu)$ where $\mu > 0$. Then, for any $t > 0$,
    \begin{equation*}
        \P(X \geq \mu + t) \leq \exp\left(-\frac{t^2}{2(\mu + t)}\right)
    \end{equation*}
    and
    \begin{equation*}
        \P(X \leq \mu - t) \leq \exp \left(- (\mu - t) \log \left(1 - \frac{t}{\mu}\right) -t \right).
    \end{equation*}
\end{lemma}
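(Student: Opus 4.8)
The plan is to prove both tail bounds by the Chernoff (exponential moment) method, using the fact that a Poisson random variable has an explicit moment generating function: for $X \sim \text{Pois}(\mu)$ and any $s \in \R$, $\E[e^{sX}] = \exp(\mu(e^s - 1))$. For the upper tail I would take $s > 0$ and apply Markov's inequality to $e^{sX}$, while for the lower tail I would take $s < 0$; in each case this yields
\begin{equation*}
\P(X \geq \mu + t) \leq \exp\left(\mu(e^s - 1) - s(\mu + t)\right), \qquad \P(X \leq \mu - t) \leq \exp\left(\mu(e^s - 1) - s(\mu - t)\right),
\end{equation*}
and the remaining work is to choose $s$ optimally and simplify.

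For the lower tail, I would optimize the exponent $\mu(e^s - 1) - s(\mu - t)$ over $s < 0$. Differentiating gives the stationary point $e^s = 1 - t/\mu$, i.e.\ $s = \log(1 - t/\mu)$, which is negative precisely when $0 < t < \mu$ (the case $t \geq \mu$ is trivial, since then $\{X \leq \mu - t\}$ forces $X \leq 0$). Substituting $e^s = 1 - t/\mu$ makes $\mu(e^s - 1) = -t$, so the exponent becomes $-t - (\mu - t)\log(1 - t/\mu)$, which is exactly the claimed bound. Thus the lower tail requires no further estimation.

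For the upper tail, the same optimization over $s > 0$ gives $s = \log(1 + t/\mu)$ and the Bennett-type bound $\P(X \geq \mu + t) \leq \exp\left(t - (\mu + t)\log(1 + t/\mu)\right)$. To reach the stated form it then remains to verify the scalar inequality
\begin{equation*}
(\mu + t)\log\left(1 + \frac{t}{\mu}\right) - t \geq \frac{t^2}{2(\mu + t)}.
\end{equation*}
Writing $u = t/\mu > 0$ and dividing through by $\mu$, this reduces to $(1 + u)\log(1 + u) - u \geq \frac{u^2}{2(1 + u)}$. I would prove this by letting $\phi(u)$ denote the difference of the two sides, noting $\phi(0) = 0$, and computing $\phi'(u) = \log(1 + u) - \frac12 + \frac{1}{2(1+u)^2}$; substituting $w = 1 + u \geq 1$ turns this into $\psi(w) = \log w - \frac12 + \frac{1}{2w^2}$, which satisfies $\psi(1) = 0$ and $\psi'(w) = (w^2 - 1)/w^3 \geq 0$, so $\psi(w) \geq 0$ and hence $\phi$ is nondecreasing with $\phi \geq 0$.

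I expect this last single-variable calculus inequality to be the only real obstacle: everything else is the routine Chernoff recipe, and the lower tail in fact lands on the claimed expression with no slack. The conversion from the sharp Bennett exponent $t - (\mu+t)\log(1+t/\mu)$ to the more convenient quadratic form $-t^2/(2(\mu+t))$ is where the (elementary) analytic work lies.
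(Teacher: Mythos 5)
Your proposal is correct and complete: the paper itself records Lemma \ref{lem:poisson_Chernoff} as a standard concentration inequality without proof, so there is no paper argument to compare against, and your derivation is the canonical one. Both halves check out --- the lower tail drops out exactly from the optimal choice $e^s = 1 - t/\mu$ (with the degenerate case $t \geq \mu$ correctly dismissed since then $\P(X \leq \mu - t) \leq \P(X=0) = e^{-\mu}$), and your calculus verification of $(1+u)\log(1+u) - u \geq \frac{u^2}{2(1+u)}$ is valid, since $\phi'(u) = \log(1+u) - \tfrac12 + \tfrac{1}{2(1+u)^2}$ vanishes at $u=0$ and is nondecreasing by the substitution $w = 1+u$.
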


\begin{lemma}[Chernoff Bound, Binomial]
\label{lem:binomial_Chernoff}
    Let $X_i: i=1, \ldots, n$ be independent Bernoulli random variables. Let $X = \sum_{i=1}^n X_i$ and $\mu = \E[X]$. Then, for any $t > 0$,
    \begin{equation*}
        \P(X \leq (1 + t)\mu) \leq \left(\frac{e^t}{(1+t)^{(1+t)}}\right)^\mu.
    \end{equation*}
\end{lemma}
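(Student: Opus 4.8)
The printed statement concerns the event $\{X \le (1+t)\mu\}$, but this reading cannot be correct: since $t > 0$, the set $\{X \le (1+t)\mu\}$ contains the mean and all smaller values, so its probability tends to $1$, whereas the right-hand side $\left(\frac{e^t}{(1+t)^{(1+t)}}\right)^{\mu}$ is strictly less than $1$ for $t>0$ (its logarithm $\mu(t-(1+t)\log(1+t))$ is negative). Thus the inequality as literally worded is vacuously false, and the event must be the upper tail $\{X \ge (1+t)\mu\}$; the printed ``$\le$'' is a transcription error for ``$\ge$''. The plan is therefore to establish $\P(X \ge (1+t)\mu) \le \left(\frac{e^t}{(1+t)^{(1+t)}}\right)^{\mu}$ by the exponential Markov (Chernoff) method, which is exactly the quantity that the surrounding concentration arguments require.

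First I would fix a free parameter $s > 0$ and pass to the moment generating function. Since $X \ge (1+t)\mu$ holds iff $e^{sX} \ge e^{s(1+t)\mu}$, Markov's inequality gives $\P(X \ge (1+t)\mu) \le e^{-s(1+t)\mu}\,\E[e^{sX}]$. Because the $X_i$ are independent, the moment generating function factorizes as $\E[e^{sX}] = \prod_{i=1}^{n} \E[e^{sX_i}]$.

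Next I would bound each factor uniformly, which is the step that accommodates non-identically-distributed Bernoullis. Writing $p_i := \E[X_i]$, each Bernoulli satisfies $\E[e^{sX_i}] = 1 + p_i(e^s - 1) \le \exp\!\big(p_i(e^s - 1)\big)$, using the elementary inequality $1 + x \le e^{x}$. Taking the product and using $\sum_{i=1}^n p_i = \mu$ yields $\E[e^{sX}] \le \exp\!\big((e^s - 1)\mu\big)$, and hence $\P(X \ge (1+t)\mu) \le \exp\!\big(\mu\,[(e^s - 1) - s(1+t)]\big)$ for every $s > 0$.

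Finally I would optimize the exponent. Differentiating $(e^s - 1) - s(1+t)$ in $s$ and setting the derivative to zero gives $e^s = 1+t$, i.e.\ $s = \log(1+t)$, which is positive precisely because $t > 0$, so the minimizer is admissible. Substituting $s = \log(1+t)$ produces exponent $t - (1+t)\log(1+t)$ per unit of $\mu$, whence $\P(X \ge (1+t)\mu) \le \exp\!\big(\mu\,[t - (1+t)\log(1+t)]\big) = \left(\frac{e^t}{(1+t)^{(1+t)}}\right)^{\mu}$. There is no substantive obstacle in this argument; it is the standard multiplicative Chernoff bound, and the only points needing care are verifying that the stationary point $s=\log(1+t)$ is positive (true for $t>0$) and that the bound $1+x\le e^x$ lets the $p_i$ vary across $i$.
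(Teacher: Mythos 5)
Your proof is correct and is precisely the standard exponential-Markov (Chernoff) argument; the paper states this lemma without proof, treating it as a classical bound, so there is no divergence in approach. You are also right that the printed $\leq$ in the event is a transcription error for $\geq$: the paper only ever applies the lemma to upper-tail events (the bound on $\P\big(\text{Binom}(\delta \log n/(d+1), \kappa) \geq \delta \log n/(2(d+1))\big)$ in the proof of Lemma \ref{lem:distinguishing_block_lemma}, and the bound on $\P(K_i > M \mid C_{p(i)}, |V_i| \leq \Delta \log n)$ in Proposition \ref{prop:propagation_error_vertex}), exactly matching your corrected statement.
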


Now, we turn our attention towards showing the connectivity of the visibility graph. In \cite{Gaudio+2024}, the authors proved the following connectivity result when $r = 1$. 
\begin{lemma}[Propositions 4.1 and 4.2 in \cite{Gaudio+2024}]
\label{lem:weak_connected_lemma}
Let $G \sim GSBM(\lambda, 1, n, f_\text{in}, f_\text{out}, d)$. Then, 
\begin{enumerate}
\item Suppose that $d = 1$, $\lambda > 1$, and $0 < \chi < (1 - 1/\lambda)/2$. Then, there exists a constant $\delta' > 0$ such that for any $0 < \delta < \delta' \chi$, the $(\chi \log n, \delta \log n)$-visibility graph of $G$ is connected with probability $1 - o(1)$. 
\item Suppose that $d \geq 2$, $\lambda \nu_d > 1$, and $\chi > 0$ satisfies 
\begin{equation}
\label{eq:chi_condition_without_r}
    \lambda \bigg(\nu_d \Big(1-\frac{3\sqrt{d}}{2} \chi^{1/d}\Big)^d - \chi\bigg) > 1 \quad \text{and} \quad 1-\frac{3\sqrt{d}}{2} \chi^{1/d} > 0.
\end{equation}
Then, there exists a constant $\delta''$ such that for any $0 < \delta < \delta'' \chi / \nu_d$, the $(\chi \log n, \delta \log n)$-visibility graph of $G$ is connected with probability $1 - o(1)$.
\end{enumerate}
We note that $\delta'$ and $\delta''$ are explicitly computable via  Lemma 4.4 in \cite{Gaudio+2024}.
\end{lemma}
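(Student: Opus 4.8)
The plan is to exploit the independence of the Poisson counts across disjoint blocks together with Lemma \ref{lem:poisson_Chernoff}, treating $d=1$ by a gap argument and $d \geq 2$ by a continuum-percolation argument. Write $\mu := \lambda \chi \log n$ for the expected number of vertices in a block, so that $|V_i| \sim \text{Pois}(\mu)$, independently across $i$ since the blocks are disjoint. First I would record an occupancy tail bound: applying the lower-tail inequality of Lemma \ref{lem:poisson_Chernoff} with $t = (\lambda\chi - \delta)\log n$ (valid for $\delta < \lambda\chi$) gives $\P(|V_i| < \delta\log n) \leq n^{-\psi(\delta)}$ with $\psi(\delta) = \lambda\chi - \delta - \delta\log(\lambda\chi/\delta)$, and $\psi(\delta) \to \lambda\chi$ as $\delta \downarrow 0$. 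Thus by choosing $\delta$ small we can make the per-block unoccupancy exponent as close to $\lambda\chi$ as desired, which is what ultimately connects the admissible range of $\delta$ to the hypotheses on $\chi$.

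For $d=1$ the region $\calS_{1,n}$ is an interval, the blocks are subintervals of length $\chi\log n$, and the visibility radius is $\log n$. A short computation shows that two blocks at index-distance $k$ have maximal inter-point distance $(k+1)\chi\log n$, so they are mutually visible exactly when $k \leq 1/\chi - 1$. Because visibility on the line is an interval relation, the occupied blocks are connected in $H$ as soon as no run of at least $\lceil 1/\chi\rceil - 1$ consecutive blocks is unoccupied. By independence, a fixed run of $m$ blocks is entirely unoccupied with probability at most $n^{-\psi(\delta) m}$, and a union bound over the $O(n/\log n)$ starting positions gives a failure probability $O(n^{1-\psi(\delta)m}/\log n)$. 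Taking $m = \lceil 1/\chi\rceil - 1$, the exponent $\psi(\delta)m$ exceeds $1$ provided $\lambda(1-\chi) > 1$, i.e. $\chi < 1 - 1/\lambda$; the hypothesis $\chi < (1-1/\lambda)/2$ supplies the slack needed to absorb both the gap between $\psi(\delta)$ and $\lambda\chi$ and the floor corrections, and determines the admissible constant $\delta'$ with $0 < \delta < \delta'\chi$.

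For $d \geq 2$ the gap argument is inadequate: when $\lambda\chi < 1$ a constant fraction of blocks are unoccupied, and connectivity must instead be established as a percolation statement, namely that the occupied blocks form a single cluster spanning the torus. Here I would reduce to the connectivity of the underlying Poisson point process. The governing quantity is the \emph{effective} volume reachable from an occupied block while remaining within the visibility radius and discounting the block's own footprint: a point in a block reaches every vertex in a ball of radius $(1 - \tfrac{3\sqrt d}{2}\chi^{1/d})(\log n)^{1/d}$, where the correction $\tfrac{3\sqrt d}{2}\chi^{1/d}$ accounts for the block side-length $(\chi\log n)^{1/d}$ and its diagonal, so the expected number of such vertices, excluding the originating block, is $\lambda\big(\nu_d(1 - \tfrac{3\sqrt d}{2}\chi^{1/d})^d - \chi\big)\log n$. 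Condition \eqref{eq:chi_condition_without_r} makes this exceed $\log n$, which is precisely the threshold at which a Poisson random geometric graph on a volume-$n$ torus is connected with high probability. A renormalization argument then promotes this to connectivity of $H$: declaring a block good if occupied, one shows the good blocks are dense enough that every vertex-visibility edge can be routed through good blocks, so $H$ inherits the spanning cluster; the range $0 < \delta < \delta''\chi/\nu_d$ is what guarantees the good-block density required for this routing.

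The main obstacle is the $d \geq 2$ case. In contrast to the one-dimensional gap argument, connectivity there is a genuinely global, topological event, and the difficulty is to transfer the continuum-percolation connectivity of the fine Poisson process to the coarse block graph $H$ while controlling the geometric discretization error incurred by replacing vertices with blocks — exactly the source of the $\tfrac{3\sqrt d}{2}\chi^{1/d}$ correction and of the delicate coupling between block occupancy and vertex reachability. Since both parts of the statement coincide with Propositions 4.1 and 4.2 of \cite{Gaudio+2024}, in the write-up I would either invoke those results directly or reproduce their renormalization scheme, whose Lemma 4.4 furnishes the explicit constants $\delta'$ and $\delta''$.
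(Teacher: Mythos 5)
This lemma is imported, not proved: the paper states it verbatim as Propositions 4.1 and 4.2 of \cite{Gaudio+2024}, with $\delta'$ and $\delta''$ obtained from Lemma 4.4 there, and offers no argument of its own. Your closing plan --- invoke those propositions directly --- therefore coincides exactly with what the paper does, and at that level the proposal is acceptable. Your $d=1$ sketch is also sound in outline and consistent with the cited proof: the Poisson lower tail (Lemma \ref{lem:poisson_Chernoff}) gives per-block unoccupancy probability $n^{-\psi(\delta)}$ with $\psi(\delta) \to \lambda\chi$ as $\delta \downarrow 0$, independence over disjoint blocks multiplies exponents along a run, and the union bound over $O(n/\log n)$ starting positions closes once $\lambda(1-\chi) > 1$, with $\chi < (1-1/\lambda)/2$ supplying slack.

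The $d \geq 2$ paragraph, however, would not survive as a standalone proof, and the gap is concrete. What your computation actually yields is that the Poisson mean $\lambda\bigl(\nu_d\bigl(1-\tfrac{3\sqrt{d}}{2}\chi^{1/d}\bigr)^d - \chi\bigr)\log n > \log n$ makes the probability that a given block's fully-visible ball contains no $\delta$-occupied block of order $n^{-c}$ with $c>1$, which after a union bound rules out \emph{isolated} occupied blocks --- but ``every occupied block has an occupied visible neighbor'' does not imply connectivity; it does not exclude several macroscopic components. The appeal to the connectivity threshold of the vertex-level Poisson RGG is a non sequitur here: connectivity of the fine point process neither implies nor is implied by connectivity of the coarse block graph, and your routing claim (``every vertex-visibility edge can be routed through good blocks'') is asserted, not established. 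The missing ingredient is an everywhere-density statement --- whp \emph{every} ball of radius of order $(\log n)^{1/d}$, wherever centered, contains a $\delta$-occupied block, which requires exactly the exponent $c>1$ your condition \eqref{eq:chi_condition_without_r} provides --- followed by a deterministic chaining step: hop along the segment joining any two occupied blocks through such balls, with the $\tfrac{3\sqrt{d}}{2}\chi^{1/d}$ correction guaranteeing each consecutive pair of hops is mutually visible. That chaining scheme is the content of the proof in \cite{Gaudio+2024}; since you end by citing it anyway this does not sink the write-up, but the middle paragraph should be labeled a heuristic rather than a proof.
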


To prove an analogous result for the $r \neq 1$ case, we first need the following extension of Lemma \ref{lem:weak_connected_lemma}.

\begin{lemma}
\label{lem:strong_connected_lemma}
Let $G \sim GSBM(\lambda, 1, n, f_\text{in}, f_\text{out}, d)$. Then,
\begin{enumerate}
\item Suppose that $d = 1$, $\lambda > 1$, and $0< \chi_0 < (1 - 1/\lambda)/2$. Then, there exists a constant $\delta' > 0$ such that for any $(\chi_0 / 2) \log n < v(n) < \chi_0 \log n$ and $\delta' v(n) / 2 < c(n) < \delta' v(n)$, the $(v(n), c(n))$-visibility graph of $G$ is connected with probability $1 - o(1)$. 
\item Suppose that $d \geq 2$, $\lambda \nu_d > 1$, and $\chi_0$ satisfies (\ref{eq:chi_condition_without_r}). Then, there exists a constant $\delta'' > 0$ such that for any $(\chi_0 / 2) \log n < v(n) < \chi_0 \log n$ and $\delta'' v(n)  / (2 \nu_d) < c(n) < \delta'' v(n) / \nu_d$, the $(v(n), c(n))$-visibility graph of $G$ is connected with probability $1 - o(1)$.
\end{enumerate}
\end{lemma}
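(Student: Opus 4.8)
The plan is to reduce Lemma \ref{lem:strong_connected_lemma} to the already-established Lemma \ref{lem:weak_connected_lemma} by absorbing the variable volume $v(n)$ and threshold $c(n)$ into \emph{effective} parameters. Writing $v(n) = \chi(n)\log n$ and $c(n) = \delta(n)\log n$, the $(v(n),c(n))$-visibility graph is by definition identical to the $(\chi(n)\log n,\delta(n)\log n)$-visibility graph, so it suffices to control the latter. The hypotheses are engineered so that $\chi(n)$ and $\delta(n)$ never leave a fixed compact region in which the weak lemma applies: for $d=1$ we have $\chi(n)\in(\chi_0/2,\chi_0)\subset(0,(1-1/\lambda)/2)$, and once we fix the strong lemma's $\delta'$ we get $\delta(n)\in(\delta'\chi(n)/2,\delta'\chi(n))\subset(0,\delta'\chi(n))$, which is exactly the admissible window $0<\delta<\delta'\chi$ of Lemma \ref{lem:weak_connected_lemma}. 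The case $d\ge 2$ is identical after replacing $(1-1/\lambda)/2$ by \eqref{eq:chi_condition_without_r} and $\delta'\chi$ by $\delta''\chi/\nu_d$. Thus, if Lemma \ref{lem:weak_connected_lemma} held verbatim for the $n$-dependent choices $\chi=\chi(n)$, $\delta=\delta(n)$, we would be done, since the two graphs literally coincide.

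The one genuine gap is that Lemma \ref{lem:weak_connected_lemma} is quoted for \emph{constant} $\chi,\delta$, whereas $\chi(n),\delta(n)$ may vary with $n$. The substantive step is therefore a \textbf{uniformity} argument: showing that the disconnection probability bound underlying Lemma \ref{lem:weak_connected_lemma} tends to $0$ uniformly over the compact parameter box, not merely for each fixed pair. Concretely, I would let $\tilde\delta'$ be the infimum over $\chi\in[\chi_0/2,\chi_0]$ of the constant furnished by Lemma 4.4 of \cite{Gaudio+2024}; this infimum is strictly positive because that constant is explicit and continuous in $\chi$. I then take the strong lemma's $\delta'$ to be \emph{strictly} smaller than $\tilde\delta'$, so that the entire threshold range $(\delta'\chi(n)/2,\delta'\chi(n))$ lies in a compact subinterval of the weak lemma's admissible set, bounded away from both endpoints. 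With this choice, the two ingredients of the weak-lemma proof are controlled uniformly: the per-block occupancy tail is a Poisson bound (Lemma \ref{lem:poisson_Chernoff}) with mean $\lambda\chi(n)\log n$ and threshold $\delta(n)\log n$, whose exponent stays bounded away from its critical value because $\chi(n),\delta(n)$ remain in a compact set away from the boundary; and the covering/connectivity estimate depends on $\chi,\delta$ only through these same uniformly-controlled quantities. Hence a single $o(1)$ bound applies to all admissible $\chi(n),\delta(n)$ simultaneously.

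The factor-of-two slack in $c(n)$ requires no extra work beyond the above, since the occupancy tail is monotone in the threshold, so the bound at the top of the range $\delta(n)=\delta'\chi(n)$ dominates every smaller threshold; if one prefers an explicitly monotone connectivity statement, note that when the top-threshold graph is connected and spans the torus, every block newly occupied under a smaller threshold lies in a gap of length at most the visibility radius between consecutive occupied blocks, hence is visible to the existing connected cluster and cannot be isolated (immediate for $d=1$, and for $d\ge 2$ a consequence of the covering property of the connected occupied region). I expect the main obstacle to be precisely the uniformity verification of the previous paragraph: because Lemma \ref{lem:weak_connected_lemma} is stated for fixed constants, the real content is confirming that its proof yields a failure probability vanishing \emph{uniformly} as the parameters range over the compact box. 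The interval form of the hypotheses and the factor-of-two slack are exactly what make this compactness available, and are what the later reduction of the general-$r$ model to the $r=1$ model—which rescales volumes and thresholds by $n$-dependent bounded factors—will need.
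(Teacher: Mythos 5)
Your proposal is correct and takes essentially the same approach as the paper: the paper likewise treats the lemma as a re-run of Propositions 4.1 and 4.2 of \cite{Gaudio+2024}, asserting the proof is ``virtually identical'' because $v(n)$ and $c(n)$ remain within fixed constant multiples of $\log n$ with proportionally scaled thresholds, and omits the details. Your compactness/uniformity argument over the parameter box $\chi(n)\in[\chi_0/2,\chi_0]$, $\delta(n)\le\delta'\chi(n)$ is exactly the content the paper leaves implicit.
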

The proof of Lemma \ref{lem:strong_connected_lemma} is virtually identical to the proof of Propositions 4.1 and 4.2 in \cite{Gaudio+2024} because the volume of each block $v(n)$ is at most $\chi_0 \log n$, which provides the existence of $\delta'$ and $\delta''$ such that the visibility graph is connected when the occupancy criterion $c(n)$ is reduced proportionally to $v(n)$. Hence, we omit the proof here.

Now, we can prove the desired connectivity result for the $r \neq 1$ case.

\begin{prop} 
\label{prop:connected_prop}
Let $G \sim GSBM(\lambda, r, n, f_\text{in}, f_\text{out}, d)$. Then,
\begin{enumerate}
\item Suppose that $d=1$, $\lambda r > 1$, and $0 < \chi_0 < (1 - 1/(\lambda r)) / 2$. Then, there exists a constant $\delta' > 0$ such that for any $\chi, \delta$ satisfying 
\begin{equation}
\label{eq:chi_delta_conditions_d=1}
    \chi_0/2 < \chi < \chi_0 \quad \text{and} \quad \delta' \chi / 2 < \delta < \delta' \chi,
\end{equation}
the $(r\chi \log n, \delta \log n)$-visibility graph of $G$ is connected with probability $1 - o(1)$. 

\item Suppose that $d \geq 2$, $\lambda r^d \nu_d > 1$, and $\chi_0$ satisfies
\begin{equation}
\label{eq:chi_condition_with_r}
\lambda r^d \bigg(\nu_d \Big(1-\frac{3\sqrt{d}}{2} \chi_0^{1/d} \Big)^d - \chi_0 \bigg) > 1 \quad \text{and} \quad 1-\frac{3\sqrt{d}}{2} \chi_0^{1/d} > 0.
\end{equation}
Then, there exists a constant $\delta'' > 0$ such that for any $\chi, \delta$ satisfying 
\begin{equation}
\label{eq:chi_delta_conditions_d>=2}
    \chi_0/2 < \chi < \chi_0 \quad \text{and} \quad \delta''\chi / (2 \nu_d) < \delta < \delta''\chi / \nu_d,
\end{equation}
the $(r^d \chi \log n, \delta \log n)$-visibility graph of $G$ is connected with probability $1 - o(1)$. 
\end{enumerate}
\end{prop}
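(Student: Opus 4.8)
The plan is to reduce to the $r = 1$ result of Lemma~\ref{lem:strong_connected_lemma} by rescaling space by a factor of $1/r$. Concretely, I would apply the map $\phi: x \mapsto x/r$, which sends $\calS_{d,n}$ bijectively onto $\calS_{d,n'}$ with $n' := n/r^d$, contracts all toroidal distances by a factor $1/r$, and pushes the intensity-$\lambda$ Poisson process forward to an intensity-$\lambda r^d$ Poisson process on $\calS_{d,n'}$. Since $\phi$ is a bijection that scales distances uniformly, it preserves the vertex count of every block and the mutual-visibility relation between blocks, and is therefore an isomorphism of block visibility graphs. Under $\phi$, the blocks of volume $r^d \chi \log n$ become blocks of volume $\chi \log n$, and the visibility radius $r(\log n)^{1/d}$ becomes $(\log n)^{1/d}$. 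Thus the rescaled configuration is exactly the point process underlying $G' \sim GSBM(\lambda r^d, 1, n')$, and the parameter hypotheses transfer verbatim: for $d = 1$ the condition $\lambda r > 1$ becomes $\lambda r^d > 1$, and for $d \geq 2$ the condition \eqref{eq:chi_condition_with_r} on $\chi_0$ becomes exactly \eqref{eq:chi_condition_without_r} with $\lambda$ replaced by $\lambda r^d$.

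Next I would invoke Lemma~\ref{lem:strong_connected_lemma} for $G'$ with the choices $v(n') = \chi \log n$ and $c(n') = \delta \log n$. Because $\log n' = \log n - d \log r = (1 + o(1)) \log n$, the strict inequalities $\chi_0/2 < \chi < \chi_0$ guarantee that $(\chi_0/2)\log n' < \chi \log n < \chi_0 \log n'$ for all large $n$, so $v(n')$ lies in the admissible window of the lemma; likewise the ranges \eqref{eq:chi_delta_conditions_d=1} and \eqref{eq:chi_delta_conditions_d>=2} are precisely the occupancy windows $\delta' v(n')/2 < c(n') < \delta' v(n')$ (for $d=1$) and $\delta'' v(n')/(2\nu_d) < c(n') < \delta'' v(n')/\nu_d$ (for $d \geq 2$), with $\delta', \delta''$ the constants furnished by Lemma~\ref{lem:strong_connected_lemma}. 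This yields that the block visibility graph of $G'$ with block volume $\chi \log n$ and occupancy $\delta \log n$ is connected with probability $1 - o(1)$.

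The one remaining gap is that Lemma~\ref{lem:strong_connected_lemma} declares blocks mutually visible using the \emph{native} radius $(\log n')^{1/d}$ of the model $GSBM(\lambda r^d, 1, n')$, whereas the graph we actually care about (the image under $\phi$) uses radius $(\log n)^{1/d}$. I expect this radius mismatch to be the main obstacle, and I would resolve it as follows. Two blocks of common side length $s = (\chi \log n)^{1/d}$ at integer grid offset $(k_1, \dots, k_d)$ have farthest-corner distance $s\sqrt{\sum_{m} (|k_m|+1)^2}$, so a pair is visible exactly when $\sum_m (|k_m|+1)^2 \le (\text{radius}/s)^2$. For both radii this threshold equals $\chi^{-2/d}(1 + o(1))$, so only finitely many offsets are ever relevant and the attainable far-corner distances near the threshold form a fixed finite set of values spaced $\Theta\big((\log n)^{1/d}\big)$ apart. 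On the other hand, $\big|(\log n)^{1/d} - (\log n')^{1/d}\big| = O\big((\log n)^{1/d - 1}\big) = o\big((\log n)^{1/d}\big)$. Hence for all large $n$ no block-pair's far-corner distance falls strictly between $(\log n')^{1/d}$ and $(\log n)^{1/d}$, which forces the two block visibility graphs to have identical edge sets (and in the borderline case where a far-corner distance equals one of the two radii, the radius-$(\log n)^{1/d}$ graph is a supergraph of the radius-$(\log n')^{1/d}$ graph). Either way, connectivity transfers from the native-radius graph to the image graph, and undoing $\phi$ gives the claimed connectivity of the $(r^d \chi \log n, \delta \log n)$-visibility graph of $G$ with probability $1 - o(1)$.
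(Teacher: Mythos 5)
Your proposal is correct, and in the harder regime it takes a genuinely different route from the paper. The paper performs the same initial rescaling to a graph on $\calS_{d,n'}$, $n' = n/r^d$, with intensity $\lambda r^d$, and then splits into two cases. For $r > 1$ it argues exactly as in the easy direction of your argument: the native radius $(\log n')^{1/d}$ is \emph{smaller} than $(\log n)^{1/d}$, so applying Lemma~\ref{lem:strong_connected_lemma} with $v(n') = \chi \log n = \chi(\log n' + d\log r)$ and $c(n') = \delta \log n$ (checking the same window conditions you check) yields a connected visibility graph whose edge set is contained in the true one, and monotonicity finishes. For $r < 1$, where monotonicity fails, the paper does not compare radii at all: it covers $\calS_{d,n'}$ by $(\lfloor 1/r \rfloor + 1)^d$ overlapping tiles of volume $n$, so that each tile's native radius is exactly $(\log n)^{1/d}$, applies Lemma~\ref{lem:weak_connected_lemma} within each tile, shows each overlap region contains a $\delta$-occupied block via a corollary of Lemma 4.6 of \cite{Gaudio+2024}, and stitches the tiles together. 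Your discretization argument replaces that entire tiling: since inter-block sup-distances lie in the fixed set $\{(\chi \log n)^{1/d}\sqrt{j} : j \in \Z_{>0}\}$, whose relevant members are constant multiples of $(\log n)^{1/d}$, while $\big|(\log n)^{1/d} - (\log n')^{1/d}\big| = \Theta\big((\log n)^{1/d-1}\big)$, the two visibility relations coincide (or nest favorably) for all large $n$. Your version is shorter and unifies the two cases, at the cost of leaning on specifics the paper's tiling does not need---that the blocks form an exact cubic grid and that mutual visibility is defined by a non-strict inequality; the tiling argument uses the connectivity lemmas purely as black boxes and is correspondingly more robust. One hairline point to write out in a full version: the borderline coincidence where a far-corner distance equals the \emph{native} radius $(\log n')^{1/d}$ (which for $r<1$ would place an edge in the native graph but not the true one) must be ruled out; this holds because $\chi^{1/d}\sqrt{j}$ is a constant while $(\log n'/\log n)^{1/d} = \big(1 - d\log r/\log n\big)^{1/d}$ is strictly monotone in $n$, so for each of the finitely many relevant $j$ the equality can occur for at most one $n$. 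The coincidence with $(\log n)^{1/d}$ itself is harmless, as you note, since that boundary edge is present in the true graph by the non-strict inequality in the definition.
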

\begin{proof}
To show that the $(r^d \chi \log n, \delta \log n)$-visibility graph of $G$ is connected with high probability, we first reduce to the $r = 1$ case by rescaling space by a factor of $1 / r$, which then allows us to apply Lemmas \ref{lem:weak_connected_lemma} and \ref{lem:strong_connected_lemma} to establish that the visibility graph is connected. This rescaling produces a graph $G_1$ with the following properties:
\begin{itemize}
    \item $G_1$ is embedded within $\calS_{d, n'}$ where $n' := n / r^d$.
    \item The intensity parameter is $\lambda' := \lambda r^d$.
    \item The visibility radius is $(\log n)^{1/d}$.
    \item The volume of each block is $\chi \log n$.
    \item The occupancy criterion is $\delta \log n$ vertices.
\end{itemize}
Since we only rescaled space, the $(\chi \log n, \delta \log n)$-visibility graph of $G_1$ is connected if and only if the $(r^d \chi \log n, \delta \log n)$-visibility graph of $G$ is connected. Thus, it is sufficient to show that the $(\chi \log n, \delta \log n)$-visibility graph of $G_1$ is connected. However, we cannot apply Lemmas \ref{lem:weak_connected_lemma} or \ref{lem:strong_connected_lemma} directly to $G_1$ directly because it is embedded in the cube $\calS_{d, n'}$ with volume $n'$ while the visibility radius is $(\log n)^{1/d}$. Instead, we will need to apply Lemmas \ref{lem:weak_connected_lemma} and \ref{lem:strong_connected_lemma} to a different graph, whose definition depends on whether $r > 1$ or $r < 1$.

Case 1: $r > 1$. Consider the graph $G_2$ with the following properties:
\begin{itemize}
    \item $G_2$ is embedded within $\calS_{d, n'}$.
    \item The intensity parameter is $\lambda'$.
    \item The visibility radius is $(\log n')^{1/d}$.
    \item The volume of each block is $\chi \log n = \chi (\log n' + d \log r)$, which we denote as $v(n')$.
    \item The occupancy criterion is $\delta \log n = \delta (\log n' + d \log r)$, which we denote as $c(n')$.
\end{itemize}

We will use Lemma \ref{lem:strong_connected_lemma} to show that the $(\chi \log n, \delta \log n)$-visibility graph of $G_2$ is connected. If $d = 1$, we have that $\lambda r > 1$ and $0 < \chi_0 < (1 - 1/(\lambda r)) / 2$, which implies that $\lambda' > 1$ and $\chi_0 < (1 - 1/\lambda')/2$. Then, we can fix $\delta' > 0$ such that Lemma \ref{lem:strong_connected_lemma} holds. Observe that $\chi_0 / 2 < \chi < \chi_0$ implies that $(\chi_0 / 2) \log n' < v(n') < \chi_0 \log n'$ because
\begin{equation*}
    v(n') = \chi (\log n' + d \log r) < \chi_0 \log n' \quad \text{for sufficiently large $n$},
\end{equation*}
and
\begin{equation*}
    v(n') = \chi (\log n' + d \log r) > \chi \log n' > (\chi_0 / 2) \log n'.
\end{equation*}
Furthermore, $\delta' \chi /2 < \delta < \delta' \chi$ implies that $\delta' v(n') / 2 < c(n') < \delta' v(n')$. Therefore, we can apply Lemma \ref{lem:strong_connected_lemma}, which shows that the $(\chi \log n, \delta \log n)$-visibility graph of $G_2$ is connected with probability $1 - o(1)$. Since $G_1$ has a larger visibility radius than $G_2$ while the other properties are the same, $G_1$ also has a connected $(\chi \log n, \delta \log n)$-visibility graph with probability $1 - o(1)$ when $d=1$.

If $d \geq 2$, we have that $\lambda r^d (\nu_d(1 - 3\sqrt{d} \chi_0^{1/d}/2)^d - \chi_0) > 1$, which implies that $\lambda' (\nu_d (1 - 3 \sqrt{d} \chi_0^{1/d} / 2)^d - \chi_0) > 1$. We also have that $1 - 3\sqrt{d}\chi_0^{1/d}/2 > 0$ by hypothesis. Thus, we can fix $\delta'' > 0$ such that Lemma \ref{lem:strong_connected_lemma} holds. Now, observe that $\chi_0 / 2 < \chi < \chi_0$ implies $(\chi_0/2) \log n' < v(n') < \chi_0 \log n'$ because
\begin{equation*}
    v(n') = \chi (\log n' + d \log r) < \chi_0 \log n' \quad \text{for sufficiently large $n$},
\end{equation*}
and
\begin{equation*}
    v(n') = \chi (\log n' + d \log r) > \chi \log n' > (\chi_0 / 2) \log n'.
\end{equation*}
In addition, $\delta'' \chi / (2 \nu_d) < \delta < \delta'' \chi / \nu_d$ implies that $\delta'' v(n') / (2 \nu_d) < c(n') < \delta '' v(n') / \nu_d$. Therefore, we can apply Lemma \ref{lem:strong_connected_lemma}, which shows that the $(\chi \log n, \delta \log n)$-visibility graph of $G_2$ is connected with probability $1 - o(1)$. Since $G_1$ has a larger visibility radius than $G_2$ while the other properties are the same, $G_1$ also has a connected $(\chi \log n, \delta \log n)$-visibility graph with probability $1 - o(1)$ when $d \geq 2$.

Case 2: $r < 1$. In this case, we will divide $\calS_{d, n'}$ into overlapping cubes of volume $n$, which we call tiles. We will then apply Lemma \ref{lem:weak_connected_lemma} to show that each tile has a connected $(\chi \log n, \delta \log n)$-visibility graph, and also show that each overlap has a non-empty block. This allows us to conclude that $G_1$ has a connected $(\chi \log n, \delta \log n)$-visibility graph.

If $d = 1$, we divide $\calS_{d, n'}$ into overlapping intervals of length $n$, which produces a total of $\floor{1/r} + 1$ tiles and $\floor{1/r}$ overlaps (the leftmost and rightmost tiles do not overlap). If $d \geq 2$, we divide $\calS_{d, n'}$ into overlapping intervals of length $n$ in each dimension and form tiles by taking one interval from each dimension, which produces a total of $(\floor{1/r} + 1)^d$ tiles. We can uniquely label each tile with a $d$-tuple $\alpha$ using the following scheme: In each dimension, we can assign each interval an index from $1$ to $\floor{1/r} + 1$ going left-to-right. Then, we set the $i^\text{th}$ element of $\alpha$, which we denote $\alpha_i$, as the index of the interval in the $i^\text{th}$ dimension which forms the tile. Then, we call two tiles with labels $\alpha$ and $\beta$ adjacent if there exists a dimension $j$ such that $\alpha_i = \beta_i$ for $i \neq j$ while $|\alpha_j - \beta_j| = 1$. In other words, two tiles are adjacent if they are formed from the same interval in $d - 1$ dimensions and distinct, overlapping intervals in the remaining dimension. Now, observe that between any two adjacent tiles, there is an overlap of length $n^{1/d}(\floor{1/r} + 1 - 1/r) / (\floor{1/r})$ in one dimension---this is because the length of $\floor{1/r} + 1$ tiles placed side-by-side is $n^{1/d} (\floor{1/r} + 1)$, but they overlap slightly so that their total length is $n^{1/d}/r$. Therefore, there is an overlap of volume $n(\floor{1/r} + 1 - 1/r) / (\floor{1/r})$ between any two adjacent tiles.

Now, let $M := (\floor{1/r} + 1)^d$ be the total number of tiles. Suppose that we number the tiles $T_1, T_2, \ldots, T_M$ such that tiles $T_i$ and $T_{i+1}$ are adjacent. We denote the region of overlap between $T_i$ and $T_{i+1}$ as $O_i$. Formally, $O_i := T_i \cap T_{i+1}$. Then, observe that the visibility graph of $G_1$ is connected if each tile is has a connected visibility graph and each $O_i$ has at least one occupied block. Therefore, we will show that each tile is connected  and each $O_i$ has at least one occupied block with probability $1 - o(1)$. Let $\calE_1$ be the event that each tile is connected and $\calE_2$ be the event that each $O_i$ has at least one occupied block.

We first show that $\calE_1$ occurs with probability $1 - o(1)$. Observe that each tile has the following properties:
\begin{itemize}
    \item The volume of each tile is $n$.
    \item The intensity parameter is $\lambda'$ .
    \item The visibility radius is $\log n$.
    \item The volume of each block is $\chi \log n$.
    \item The occupancy criterion is $\delta \log n$.
\end{itemize}

If $d = 1$, we have that $\lambda r > 1$ and $0 < \chi_0 < (1 - 1/(\lambda r)) / 2$, which implies that $\lambda' > 1$ and $\chi_0 < (1 - 1/\lambda') / 2$. Then, since $\chi_0 / 2 < \chi < \chi_0$, we obtain that $0 < \chi < (1 - 1/(\lambda r)) / 2$. Hence, we can fix $\delta'$ such that Lemma \ref{lem:weak_connected_lemma} holds. Then, since $0 < \delta < \delta' \chi$, we can apply Lemma \ref{lem:weak_connected_lemma} to obtain that the $(\chi \log n, \delta \log n)$-visibility graph of any given tile is connected with probability $1- o(1)$. Finally, since there are $(\floor{1/r} + 1)^d$ tiles (which is constant in $n$), the union bound gives us $\P(\calE_1) = 1 - o(1)$ when $d = 1$. 

If $d \geq 2$, we have that $\lambda r^d (\nu_d(1 - 3\sqrt{d} \chi_0^{1/d}/2)^d - \chi_0) > 1$, which implies that $\lambda' (\nu_d (1 - 3 \sqrt{d} \chi_0^{1/d} / 2)^d - \chi_0) > 1$. We also have that $1 - 3\sqrt{d}\chi_0^{1/d}/2 > 0$ by hypothesis. Then, since $\chi_0/2 < \chi < \chi_0$, we obtain that $\lambda' (\nu_d (1 - 3 \sqrt{d} \chi^{1/d} / 2)^d - \chi) > 1$ and $1 - 3\sqrt{d}\chi^{1/d}/2 > 0$. Hence, we can fix $\delta''$ such that Lemma \ref{lem:weak_connected_lemma} holds. Then, since $0 < \delta < \delta'' \chi / \nu_d$, we can apply Lemma \ref{lem:weak_connected_lemma} to obtain that the $(\chi \log n, \delta \log n)$-visibility graph of any given tile is connected with probability $1- o(1)$. Finally, since there are $(\floor{1/r} + 1)^d$ tiles, the union bound gives us $\P(\calE_1) = 1 - o(1)$ when $d \geq 2$ as well. 

Now, we show that any given $O_i$ contains at least one occupied block with probability $1 - o(1)$. For any given block $B_i$, define $U_i$ as the set of blocks visible to $B_i$. That is,
\begin{equation}
\label{eq:def_Ui}
    U_i = \bigcup_{j: j \neq i, B_j \visible B_j} B_j.
\end{equation}
Then, let $K$ be the number of blocks in $U_i$. Since $\text{Vol}(U_i) = \Theta(\log n)$ but $\text{Vol}(O_i) = \Theta(n)$, we see that $O_i$ contains more than $K$ blocks for sufficiently large $n$. A corollary of Lemma 4.6 in \cite{Gaudio+2024} is that for any given set of $K$ blocks $B_{j_1}, B_{j_2}, \ldots, B_{j_K}$, there exists $\epsilon > 0$ such that
\begin{equation*}
    \P(\bigcap_{k=1}^K \{|V_{jk}| \leq \delta \log n \}) \leq n^{-(1+\epsilon)}.
\end{equation*}
That is, for any given set of $K$ blocks, the probability that all $K$ blocks are $\delta$-unoccupied is $o(1)$. Then, since $O_i$ has more than $K$ blocks for sufficiently large $n$,
\begin{equation*}
    \P(\bigcap_{B_i \subseteq O_i} \{|V_i| \leq \delta \log n\}) \leq n^{-(1+\epsilon)}.
\end{equation*}
That is, the probability that all blocks in $O_i$ are $\delta$-unoccupied is $o(1)$. Finally, since there are $(\floor{1/r} + 1)^d - 1$ overlaps $O_i$ (which is constant in $n$), we can use the union bound to obtain $\P(\calE_2) = 1 - o(1)$. 

Therefore, we have that
\begin{equation*}
    \P(\calE_1 \cap \calE_2) = 
    1 - \P(\calE_1^c \cup \calE_2^c) \geq 
    1 - \P(\calE_1^c) - \P(\calE_2^c) 
    = 1 - o(1)
\end{equation*}
which shows that the $(\chi \log n, \delta \log n)$-visibility graph of $G_1$ is connected with probability $1 - o(1)$. 
\end{proof}

\section{Labeling an Initial Block}
\label{sec:label_initial}
In this section, we will show that Algorithm \ref{alg:exact_recovery} produces the correct labeling for all vertices in the initial block $V_{i_1}$ with high probability.

\begin{prop}
\label{prop:initial_block_labeling}
Let $\hat{\sigma}$ be the estimated labels for the initial block $V_{i_1}$ produced by Line \ref{line:pairwise_classify} of Algorithm \ref{alg:exact_recovery}. Then, for any positive constants $\delta < \Delta$,
\begin{equation*}
    \mathbb{P}\bigg( \bigcup_{v \in V_{i_1} \setminus \{u_0\}} \big\{ \hat{\sigma}(v) = \sigma^*(v) \sigma^*(u_0) \big\} \: \Big| \: \delta \log n < |V_{i_1}| < \Delta \log n \bigg) = 1 - o(1).
\end{equation*}
That is, conditioned on $\delta \log n < |V_{i_1}| < \Delta \log n$, $\hat{\sigma}$ is correct for all vertices in $V_{i_1}$ (up to a global sign flip) with probability $1 - o(1)$. 
\end{prop}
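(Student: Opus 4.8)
The plan is to bound, for each $v \in V_{i_1}\setminus\{u_0\}$, the two error probabilities $\P(X_v \le 0 \mid \sigma^*(v) = \sigma^*(u_0))$ and $\P(X_v \ge 0 \mid \sigma^*(v) \neq \sigma^*(u_0))$ by a negative power of $n$, and then union bound over the $O(\log n)$ vertices of $V_{i_1}$. Throughout I condition on the locations $L_{i_1}$ and on the event $\delta \log n < |V_{i_1}| < \Delta \log n$. The first step is to compute $\E[X_v \mid \sigma^*(u_0), \sigma^*(v), L_{i_1}]$. Writing $Z_u := \1\{u \sim u_0, u \sim v\}$ and conditioning additionally on $\sigma^*(u)$, conditional independence of the edges gives $\P(Z_u = 1 \mid \cdots) = g_a g_b$, where $g_a \in \{\overline{f}_\text{in}(\|u - u_0\|), \overline{f}_\text{out}(\|u-u_0\|)\}$ and $g_b \in \{\overline{f}_\text{in}(\|u-v\|), \overline{f}_\text{out}(\|u-v\|)\}$ according to whether $\sigma^*(u)$ agrees with $\sigma^*(u_0)$, respectively with $\sigma^*(v)$. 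Averaging over the uniform label $\sigma^*(u)$ in each of the regimes $\sigma^*(v) = \sigma^*(u_0)$ and $\sigma^*(v) \neq \sigma^*(u_0)$, and subtracting the centering term $\P(Z_u = 1 \mid L_{i_1})$ (which is exactly the mean of the two regime values), one finds that the contribution of $u$ to the conditional mean is $\alpha_u$ times $\pm \tfrac14 (\overline{f}_\text{in}(\|u-u_0\|) - \overline{f}_\text{out}(\|u-u_0\|))(\overline{f}_\text{in}(\|u-v\|) - \overline{f}_\text{out}(\|u-v\|))$. Since $\alpha_u$ is precisely the sign of this product, each contribution equals $\pm \tfrac14 |\overline{f}_\text{in}(\|u-u_0\|) - \overline{f}_\text{out}(\|u-u_0\|)| \cdot |\overline{f}_\text{in}(\|u-v\|) - \overline{f}_\text{out}(\|u-v\|)|$, with a $+$ sign when $\sigma^*(v) = \sigma^*(u_0)$ and a $-$ sign otherwise. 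In particular the conditional mean has the claimed sign, and every $u$ that $\epsilon$-distinguishes both $u_0$ and $v$ contributes at least $\tfrac14 \epsilon^2$ in absolute value.

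The second step, which I expect to be the main obstacle, is a geometric lemma asserting that with probability $1 - o(1)$ over $L_{i_1}$, every $v$ is $\epsilon$-distinguished jointly with $u_0$ by at least $c' \log n$ vertices, whence $|\E[X_v \mid \sigma^*(u_0), \sigma^*(v), L_{i_1}]| \ge \tfrac{\epsilon^2 c'}{4} \log n$. Rescaling space by $(\log n)^{1/d}$, the block becomes a cube of fixed volume $r^d \chi$, and any $u$ contributing a distinguishing factor has scaled distance at most $r$ from the relevant center, so the relevant scaled distances lie in a bounded interval. By Assumptions \ref{eq:function_assumption_3} and \ref{eq:function_assumption_4}, the bad set $\{t : |f_\text{in}(t) - f_\text{out}(t)| \le \epsilon\}$ is contained in $m$ intervals of total length at most $m\gamma(\epsilon)$; hence the subregion of the rescaled block whose distance to a fixed center falls in the bad set is a union of spherical shells of volume at most $d \nu_d r^{d-1} m \gamma(\epsilon)$, which tends to $0$ as $\epsilon \to 0$ while the block volume $r^d \chi$ remains fixed. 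Choosing $\epsilon$ small enough that the bad regions around $u_0$ and around $v$ each occupy at most a quarter of the block, a uniformly random vertex is doubly-distinguishing with probability at least $1/2$; a Chernoff bound (Lemma \ref{lem:binomial_Chernoff}) then produces at least $c' \log n \approx \tfrac{\delta}{4}\log n$ such vertices except with probability $n^{-\Omega(1)}$, and a union bound over the $O(\log n)$ choices of $v$ closes the lemma. The delicate points are that $\epsilon$ must be chosen uniformly over all $v$ and that the shell volumes must be controlled; Assumption \ref{eq:function_assumption_4} ($\gamma(\epsilon)\to 0$) is exactly what makes this possible, ruling out the degenerate case in which $f_\text{in}$ and $f_\text{out}$ stay close on a macroscopic part of the block.

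The final step concludes via concentration. Conditioned on $L_{i_1}, \sigma^*(u_0), \sigma^*(v)$, the coefficients $\alpha_u$ and the centering terms are deterministic, so $X_v = \sum_{u} \alpha_u (Z_u - \P(Z_u = 1 \mid L_{i_1}))$ is a sum of independent summands indexed by $u$, each of range at most $1$. On the good-location event of the geometric lemma, Hoeffding's inequality gives
\[
\P\big(X_v \le 0 \mid \sigma^*(v) = \sigma^*(u_0), L_{i_1}\big) \le \exp\Big(-\frac{2(\E[X_v \mid \cdots])^2}{|V_{i_1}|}\Big) \le n^{-\kappa}
\]
for a constant $\kappa > 0$ depending on $\epsilon, \delta, \Delta$, and symmetrically $\P(X_v \ge 0 \mid \sigma^*(v) \neq \sigma^*(u_0), L_{i_1}) \le n^{-\kappa}$. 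Since $\hat\sigma(v) = \sign(X_v)$ errs relative to $\sigma^*(v)\sigma^*(u_0)$ exactly on these events, a union bound over all $v \in V_{i_1}\setminus\{u_0\}$ and both label regimes — combined with the $o(1)$ failure probability of the geometric lemma — shows that $\hat\sigma(v) = \sigma^*(v)\sigma^*(u_0)$ simultaneously for every $v$ with probability $1 - o(1)$, as claimed.
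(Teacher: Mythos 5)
Your proof is correct, and its skeleton is the paper's: the identical mean computation showing each $u$ contributes $\pm\frac{1}{4}\big|\overline{f}_\text{in}(\|u-u_0\|) - \overline{f}_\text{out}(\|u-u_0\|)\big|\,\big|\overline{f}_\text{in}(\|u-v\|) - \overline{f}_\text{out}(\|u-v\|)\big|$ with the sign fixed by $\alpha_u$, the same annuli-volume argument from Assumption \ref{eq:function_assumption_4} showing that a uniformly random vertex doubly distinguishes $(u_0, v)$ with probability bounded well away from zero (the paper pushes this to $3/4$, you settle for $1/2$; either works), then Hoeffding and a union bound over the $O(\log n)$ vertices. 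Where you genuinely diverge is the bookkeeping around the random locations. The paper never conditions on a good-location event: it lower-bounds the \emph{location-averaged} mean by $\frac{(n_1-2)\epsilon^2}{8}\,\P(\calE \mid |V_{i_1}| = n_1)$ using only the crude estimate $\P(\calE \mid |V_{i_1}| = n_1) \ge 1/2$, and then feeds that averaged mean directly into Hoeffding. As written this is slightly informal, since the summands of $X_v$ are independent only conditionally on $L_{i_1}$ and the conditional mean fluctuates with the locations. You instead upgrade the distinguishing-count event to probability $1 - n^{-\Omega(1)}$ (legitimate, because $n_1 > \delta \log n$ makes the binomial Chernoff bound polynomially small), condition on it, apply Hoeffding to the conditional law where independence actually holds, and pay the geometric lemma's failure probability separately in the final union bound. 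Your route is the fully rigorous rendering of the paper's intent at no extra cost; the paper's shortcut simply leaves the conditioning step to the reader. Two small remarks: your observation that each summand is a centered Bernoulli times a sign, hence of range $1$, gives a sharper Hoeffding constant than the paper's generic $[-1,1]$ bound; and your shell-volume estimate $d \nu_d r^{d-1} m \gamma(\epsilon)$ holds up to lower-order terms in $\gamma(\epsilon)$, which suffices since, like the paper's exact expression $\nu_d \sum_{i=1}^m \big((t_i + \gamma(\epsilon)/2)^d - (t_i - \gamma(\epsilon)/2)^d\big)$, it only needs to vanish as $\epsilon \to 0$.
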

\begin{proof}
We need to upper-bound the probabilities of the two ways of making an error, which are 
\begin{equation*}
    \mathbb{P}(X_v \leq 0 \mid \sigma^*(v) = \sigma^*(u_0)) \quad \text{ and } \quad  \mathbb{P}(X_v  \geq 0 \mid \sigma^*(v) \neq \sigma^*(u_0)).
\end{equation*}
We start by upper-bounding the first probability. Let $L_{i_1}$ denote the locations of the vertices in $V_{i_1}$. First, we compute the expectation $\E\left[X_v \mid \sigma^*(v) = \sigma^*(u_0), |V_{i_1}| = n_1, L_{i_1} \right]$, which yields
\begin{equation*}
\begin{aligned}
    & \E\left[X_v \mid \sigma^*(v) = \sigma^*(u_0), |V_{i_1}| = n_1, L_{i_1} \right] \\
    &= \E \bigg[ \sum_{u \in V_{i_1} \setminus\{u_0, v\}} \alpha_u \Big(\1\{u \sim u_0, u \sim v\} - \mathbb{P}(u \sim u_0, u \sim v \mid L_{i_1} )\Big) \: \Big| \: \sigma^*(v) = \sigma^*(u_0), |V_{i_1}| = n_1, L_{i_1} \bigg] \\
    &= \sum_{u \in V_{i_1} \setminus\{u_0, v\}} \alpha_u \Big( \E\left[ \1\{u \sim u_0, u \sim v\} \mid \sigma^*(v) = \sigma^*(u_0), L_{i_1} \right] - \P(u \sim u_0, u \sim v \mid L_{i_1}) \Big) \\
    &= \sum_{u \in V_{i_1} \setminus\{u_0, v\}} \alpha_u \Big( \P(u \sim u_0, u\sim v \mid \sigma^*(v) = \sigma^*(u_0), L_{i_1}) - \P(u \sim u_0, u \sim v \mid L_{i_1}) \Big) \\
    &= \sum_{u \in V_{i_1} \setminus\{u_0, v\}} \frac{1}{2}\alpha_u \Big( \P(u \sim u_0, u\sim v \mid \sigma^*(v) = \sigma^*(u_0), L_{i_1}) - \P(u \sim u_0, u \sim v \mid \sigma^*(v) \neq \sigma^*(u_0), L_{i_1}) \Big) \\
    &= \sum_{u \in V_{i_1} \setminus\{u_0, v\}} \frac{1}{2} \Big| \P(u \sim u_0, u\sim v \mid \sigma^*(v) = \sigma^*(u_0), L_{i_1}) - \P(u \sim u_0, u \sim v \mid \sigma^*(v) \neq \sigma^*(u_0), L_{i_1}) \Big|.
\end{aligned} 
\end{equation*}
Observe that
\begin{equation*}
    \P(u \sim u_0, u\sim v \mid \sigma^*(v) = \sigma^*(u_0), L_{i_1}) = \frac{1}{2} \Big( \overline{f}_\text{in}(\|u-u_0\|) \overline{f}_\text{in}(\|u-v\|) + \overline{f}_\text{out}(\|u-u_0\|) \overline{f}_\text{out}(\|u-v\|) \Big)
\end{equation*}
and
\begin{equation*}
    \P(u \sim u_0, u\sim v \mid \sigma^*(v) \neq \sigma^*(u_0), L_{i_1}) = \frac{1}{2} \Big( \overline{f}_\text{in}(\|u-u_0\|) \overline{f}_\text{out}(\|u-v\|) + \overline{f}_\text{out}(\|u-u_0\|) \overline{f}_\text{in}(\|u-v\|) \Big).
\end{equation*}
Therefore, we obtain
\begin{equation*}
\begin{aligned}
     & \E\left[X_v \mid  \sigma^*(v) = \sigma^*(u_0), |V_{i_1}| = n_1, L_{i_1} \right] = \\
     &= \sum_{u \in V_{i_1} \setminus\{u_0, v\}} \frac{1}{4} \Big| \overline{f}_\text{in}(\|u-u_0\|) \overline{f}_\text{in}(\|u-v\|) + \overline{f}_\text{out}(\|u-u_0\|) \overline{f}_\text{out}(\|u-v\|) \\
     & \qquad \qquad \qquad \qquad - \overline{f}_\text{in}(\|u-u_0\|) \overline{f}_\text{out}(\|u-v\|) + \overline{f}_\text{out}(\|u-u_0\|) \overline{f}_\text{in}(\|u-v\|) \Big| \\
     &=\sum_{u \in V_{i_1} \setminus\{u_0, v\}} \frac{1}{4} \Big| \overline{f}_\text{in}(\|u-u_0\|) - \overline{f}_\text{out}(\|u-u_0\|) \Big| \Big| \overline{f}_\text{in}(\|u-v\|) - \overline{f}_\text{out}(\|u-v\|) \Big|.
\end{aligned}
\end{equation*}
Letting $\overline{f}_\text{diff}(t) := \overline{f}_\text{in}(t) - \overline{f}_\text{out}(t)$, we have
\begin{equation*}
    \E\left[X_v \mid  \sigma^*(v) = \sigma^*(u_0), |V_{i_1}| = n_1, L_{i_1} \right] = \sum_{u \in V_{i_1} \setminus\{u_0, v\}} \frac{1}{4} \Big| \overline{f}_\text{diff}(\|u-u_0\|) \Big| \Big| \overline{f}_\text{diff}(\|u-v\|) \Big|.
\end{equation*}

Now, fix $\epsilon > 0$. For any given vertex $w$ in block $B_{i_1}$, we define the event
\begin{equation*}
    \calA_\epsilon(w) := \Big\{ | \overline{f}_\text{diff}(\|w-u_0\|)| > \epsilon \Big\} \bigcap \Big\{| \overline{f}_\text{diff}(\|w-v\|)| > \epsilon\Big\}.
\end{equation*}
Then, let $\calE$ be the event that $\calA_\epsilon(u)$ holds for at least half of the vertices $u \in V_1 \setminus \{u_0, v\}$. That is,
\begin{equation*}
    \calE = \bigg\{ \sum_{u \in V_{i_1} \setminus \{u_0, v\}} \1\{\calA_\epsilon(u)\} \geq \frac{1}{2}(|V_{i_1}|-2) \bigg\}.
\end{equation*}
Observe that conditioned on the event $\calE$, we have
\begin{equation*}
\begin{aligned}
    \E\left[X_v \mid  \sigma^*(v) = \sigma^*(u_0), |V_{i_1}| = n_1, \calE \right] > \frac{(n_1-2) \epsilon^2}{8}.
\end{aligned}
\end{equation*}
Then, by the law of total probability, we obtain
\begin{equation*}
\begin{aligned}
    \E\left[X_v \mid  \sigma^*(v) = \sigma^*(u_0), |V_{i_1}| = n_1 \right] > \frac{(n_1-2) \epsilon^2}{8} \P(\calE \mid |V_{i_1}| = n_1).
\end{aligned}
\end{equation*}
Now, we compute a lower bound for $\P(\calE \mid |V_{i_1}| = n_1)$. Let $w$ be a vertex placed uniformly at random within block $B_{i_1}$. Recall Assumption \ref{eq:function_assumption_4}, which states that
\begin{equation*}
\Big\{ 0 < t < r: |f_\text{diff}(t)|  \leq \epsilon \Big\} \subseteq \bigcup_{i=1}^m \Big[ t_i-\frac{\gamma(\epsilon)}{2}, t_i + \frac{\gamma(\epsilon)}{2} \Big]
\end{equation*}
where $\gamma(\epsilon) \to 0$ as $\epsilon \to 0$. Thus, the region for which $|\overline{f}_\text{diff}(\|u - u_0\|) | \leq \epsilon$ is the union of $m$ annuli centered at $u$ with inner radii $(t_i - \gamma(\epsilon)/2) (\log n)^{1/d}$ and $(t_i + \gamma(\epsilon)/2) (\log n)^{1/d}$. This region has volume at most
\begin{equation*}
    \sum_{i=1}^m \nu_d \Big( (t_i + \gamma(\epsilon)/2)^d - (t_i - \gamma(\epsilon)/2)^d \Big) \log n
\end{equation*}
which implies that
\begin{equation*}
\begin{aligned}
    \P\Big(| \overline{f}_\text{diff}(\|w-u_0\|) | < \epsilon \Big) & \leq \frac{1}{r^d \chi \log n} \sum_{i=1}^m \nu_d \Big( (t_i + \gamma(\epsilon)/2)^d - (t_i - \gamma(\epsilon)/2)^d \Big) \log n \\
    & = \frac{1}{r^d \chi} \sum_{i=1}^m \nu_d \Big( (t_i + \gamma(\epsilon)/2)^d - (t_i - \gamma(\epsilon)/2)^d \Big)
\end{aligned}
\end{equation*}
where first inequality holds because the block has volume $r^d \chi \log n$. Now, let $\overline{t} = \max_i t_i$ and observe that $(t_i + \gamma(\epsilon)/2)^d - (t_i - \gamma(\epsilon)/2)^d < (\overline{t} + \gamma(\epsilon)/2)^d - (\overline{t} - \gamma(\epsilon)/2)^d$ for all $i = 1, 2, \ldots, m$. Therefore,
\begin{equation*}
    \P\Big(| \overline{f}_\text{diff}(\|w-u_0\|) | < \epsilon \Big) < \frac{1}{r^d \chi} m \nu_d \Big( (\overline{t} + \gamma(\epsilon)/2)^d - (\overline{t} - \gamma(\epsilon)/2)^d \Big).
\end{equation*}
A similar computation shows that
\begin{equation*}
    \P\Big(| \overline{f}_\text{diff}(\|w-v\|) | < \epsilon \Big) < \frac{1}{r^d \chi} m \nu_d \Big( (t + \gamma(\epsilon)/2)^d - (t - \gamma(\epsilon)/2)^d \Big).
\end{equation*}
Hence, we obtain that
\begin{equation*}
     \P (\calA_\epsilon(w)) > 1 -  \frac{2}{r^d \chi} m \nu_d \Big( (t + \gamma(\epsilon)/2)^d - (t - \gamma(\epsilon)/2)^d \Big)
\end{equation*}
via the union bound. Now, let $q_\epsilon := \P(A_\epsilon(w))$ and $X$ be the number of vertices $u \in V_1 \setminus \{u_0, v\}$ for which $\calA_\epsilon(u)$ occurs. Then, observe that event $\calE$ occurs if and only if $X > (n_1 - 2)/2$. Furthermore, conditioned on $|V_{i_1}| = n_1$ we have $X \sim \text{Binom}(n_1 - 2, q_\epsilon)$. Therefore, we obtain that
\begin{equation*}
    \P(\calE \mid |V_{i_1}| = n_1) = \P \left( \text{Binom}(n_1 - 2, q_\epsilon) > \frac{1}{2}(n_1-2) \right).
\end{equation*}
Since $\gamma(\epsilon) \to 0$ as $\epsilon \to 0$, we also have that $q_\epsilon \to 1$ as $\epsilon \to 0$. Consequently, we can take $\epsilon$ such that $q_\epsilon > 3/4$, which shows that
\begin{equation*}
    \P(\calE \mid |V_{i_1}| = n_1) \geq \P \bigg (\text{Binom}\left(n_1 - 2, \frac{3}{4}\right) > \frac{1}{2}(n_1 - 2) \bigg) > \frac{1}{2}
\end{equation*}
where the last inequality follows from direct calculation. Therefore, $\P(\calE \mid |V_{i_1}| = n_1) \geq 1/2$, which yields
\begin{equation*}
\begin{aligned}
    \E\left[X_v \mid  \sigma^*(v) = \sigma^*(u_0), |V_{i_1}| = n_1 \right] > \frac{(n_1-2) \epsilon^2}{16}.
\end{aligned}
\end{equation*}

Now, we bound the probability of an error. If $\sigma^*(v) = \sigma^*(u_0)$, an error occurs if $X_v \leq 0$. Hence, the probability of making an error is $P( X_v \leq 0 \mid \sigma^*(u_0) = \sigma^*(v) , |V_{i_1}|=n_1)$. Since $X_v$ is the sum of independent random variables bounded between $-1$ and $1$, we can apply Hoeffding's inequality to obtain 
\begin{equation*}
\begin{aligned}
    \P( X_v \leq 0 \mid \sigma^*(u_0) = \sigma^*(v) , |V_{i_1}|=n_1) & \leq \exp\left(-\frac{2 ((n_1 - 2)\epsilon^2/16)^2}{4(n_1 - 2)}\right) \\
    &= \exp\left(- \frac{(n_1-2)\epsilon^4}{512}\right). \\
\end{aligned}
\end{equation*}
which gives us an upper bound for the error probability when $\sigma^*(u_0) = \sigma^*(v)$. Now, we examine $\mathbb{P}(X_v  \geq 0 \mid \sigma^*(v) \neq \sigma^*(u_0))$, the probability of making an error when $\sigma^*(u_0) \neq \sigma^*(v)$. We use a virtually identical approach. First, we compute the expectation $\E\left[X_v \mid \sigma^*(v) \neq \sigma^*(u_0), |V_{i_1}| = n_1 \right]$, which yields
\begin{equation*}
    \E\left[X_v \mid \sigma^*(v) \neq \sigma^*(u_0), |V_{i_1}| = n_1 \right] < - \frac{(n_1 - 2)\epsilon^2}{16}.
\end{equation*}
Then, we use Hoeffding's inequality to upper bound the probability of an error when $\sigma^*(v) \neq \sigma^*(u_0$), which shows that
\begin{equation*}
    \P(X_v \geq 0 \mid \sigma^*(u_0) \neq \sigma^*(v), |V_{i_1}| = n_1) \leq \exp\left(- \frac{(n_1-2)\epsilon^4}{512}\right).
\end{equation*}
Therefore, we also obtain an upper bound for the error probability when $\sigma^*(u_0) \neq \sigma^*(v)$. Now, we can use the law of total probability to compute the overall probability of making an error when labeling vertex $v$. Observe that
\begin{equation*}
\begin{aligned}
    \P\left(\hat{\sigma}(v) \neq \sigma^*(v) \sigma^*(u_0) \mid |V_{i_1}| = n_1 \right) &= \frac{1}{2}  \P( X_v \leq 0 \mid \sigma^*(u_0) = \sigma^*(v) , |V_{i_1}|=n_1) \\
    & \qquad + \frac{1}{2} \P(X_v \geq 0 \mid \sigma^*(u_0) \neq \sigma^*(v), |V_{i_1}| = n_1) \\
    &\leq \exp\left(- \frac{(n_1-2)\epsilon^4}{512}\right).
\end{aligned}
\end{equation*}
If $\delta \log n < |V_{i_1}| < \Delta \log n$, then the probability of making an error when labeling $v$ becomes
\begin{equation*}
\begin{aligned}
    \P\left(\hat{\sigma}(v) \neq \sigma^*(v) \sigma^*(u_0) \mid \delta \log n < |V_{i_1}| < \Delta \log n \right) &\leq \exp\left(-\frac{(\delta \log n - 2) \epsilon^4}{512}\right) \\
    &= \exp\left(\frac{\epsilon^4}{256}\right) \exp\left(-\frac{\delta \epsilon^4}{512} \log n\right) \\
    &= \exp\left(\frac{\epsilon^4}{256}\right) n^{-\delta \epsilon^4 / 512} \\
    &= \eta_1 n^{-c_1}
\end{aligned}
\end{equation*}
where $\eta_1 := \exp(\epsilon^4/256)$ and $c_1 := \delta \epsilon^4 / 512$ are constants in $n$. Finally, we apply the union bound to upper-bound the probability that some $v \in V_1$ is incorrectly labeled relative to $u_0$, which yields
\begin{equation*}
    \mathbb{P}\bigg( \bigcup_{v \in V_{i_1} \setminus \{u_0\}} \big\{ \hat{\sigma}(v) \neq \sigma^*(v) \sigma^*(u_0) \big\} \: \bigg| \: \delta \log n < |V_1| < \Delta \log n \bigg) \leq \eta_1 n^{-c_1} \Delta \log n = o(1).
\end{equation*}
Therefore, we have shown that $\hat{\sigma}$ correctly labels all vertices in $V_{i_1}$ (up to a global sign flip) with probability $1 - o(1)$.
\end{proof}

\section{Propagate}
\label{sec:propagate}
In this section, we will show that the Phase I labeling $\hat{\sigma}$ produced by Algorithm \ref{alg:exact_recovery} makes at most a constant number of mistakes in each $\delta$-occupied block. First, we introduce the following definition.

\begin{definition}[Distinguishing Block]
    Let $U_i$ be defined as in (\ref{eq:def_Ui}). We call a block $B_i$ $(\alpha, \epsilon)$-distinguishing if for every vertex $v \in U_i$, there are at least $\alpha \log n$ vertices in $V_i$ that $\epsilon$-distinguish $v$. We remark that for two vertices $v_1 \neq v_2$, the set of $\alpha \log n$ points which $\epsilon$-distinguish them can be different.
\end{definition}

We will show that all blocks are $(\alpha, \epsilon)$-distinguishing with high probability for suitable values of $\alpha$ and $\epsilon$. The following result will be helpful in our calculations.

\begin{lemma}
\label{lem:limit_calculation_lemma}
    For any constants $t \geq 0$, $k \in \N$ and $d \in \N$, 
    \begin{equation}
    \label{eq:limit_calculation_lemma}
    \lim_{x \to 0} \frac{1}{x^k} \bigg(  (t + x)^d - (t - x)^d \bigg)^{k+1} = 0.
    \end{equation}
\end{lemma}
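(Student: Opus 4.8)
The plan is to exploit the fact that $g(x) := (t+x)^d - (t-x)^d$ is a polynomial in $x$ that vanishes at $x = 0$, so it factors as $x$ times a function that stays bounded near the origin. Concretely, the binomial theorem gives
\begin{equation*}
g(x) = \sum_{j=0}^d \binom{d}{j} t^{d-j}\big(1 - (-1)^j\big) x^j = 2\sum_{\substack{j=1 \\ j \text{ odd}}}^d \binom{d}{j} t^{d-j} x^j,
\end{equation*}
so that the even-indexed terms cancel and every surviving term carries at least one factor of $x$.

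Accordingly, I would write $g(x) = x\, h(x)$, where $h(x) := 2\sum_{j \text{ odd}} \binom{d}{j} t^{d-j} x^{j-1}$ is itself a polynomial. Since $h$ is continuous, it is bounded on a neighborhood of $0$: there exist constants $C > 0$ and $\delta > 0$ with $|h(x)| \leq C$ whenever $|x| \leq \delta$. This yields the single key estimate $|g(x)| \leq C|x|$ for $|x| \leq \delta$. The point of phrasing it this way is that the bound holds uniformly across all $t \geq 0$ and $d \geq 1$; in particular it absorbs the degenerate case $t = 0$ (where $g(x) = (1-(-1)^d)x^d$ vanishes identically for even $d$ and equals $2x^d$ for odd $d$) without any separate casework.

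Given this estimate, the conclusion is immediate. For $0 < |x| \leq \delta$,
\begin{equation*}
\left| \frac{1}{x^k}\big(g(x)\big)^{k+1} \right| = \frac{|g(x)|^{k+1}}{|x|^k} \leq \frac{(C|x|)^{k+1}}{|x|^k} = C^{k+1}|x|,
\end{equation*}
and the right-hand side tends to $0$ as $x \to 0$, establishing \eqref{eq:limit_calculation_lemma}.

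I do not expect a genuine obstacle here. The only step warranting care is verifying that the one bound $|g(x)| \leq C|x|$ genuinely covers every case at once — in particular the sign cancellations when $t = 0$ — rather than treating $t > 0$ and $t = 0$ separately. The factorization $g(x) = x\,h(x)$ handles this automatically, since it encodes the vanishing of $g$ at the origin directly and reduces the whole lemma to the boundedness of a continuous function near a point.
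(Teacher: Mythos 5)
Your proof is correct, and it takes a genuinely different route from the paper. The paper proves the lemma by induction on $k$: the base case $k=0$ is immediate, and the inductive step applies L'H\^opital's rule to $\frac{1}{x^{k+1}}\big((t+x)^d-(t-x)^d\big)^{k+2}$, splitting the resulting expression into a product of two limits, one of which vanishes by the inductive hypothesis. You instead observe via the binomial theorem that $g(x):=(t+x)^d-(t-x)^d$ has only odd-degree terms, factor $g(x)=x\,h(x)$ with $h$ a polynomial (hence bounded by some $C$ near $0$), and conclude directly that $\big|x^{-k}g(x)^{k+1}\big|\leq C^{k+1}|x|\to 0$. Your argument is more elementary --- it needs nothing beyond continuity of polynomials, whereas the paper's needs L'H\^opital at every inductive stage --- and it is also more informative, since it exhibits the explicit linear rate $O(|x|)$ of convergence uniformly in $k$ once $C$ is fixed. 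One phrasing nitpick: the constant $C$ does depend on $t$ and $d$ (through $h$), so the bound is not literally uniform over all $t\geq 0$ and $d\geq 1$; what is true, and what you presumably mean, is that the single factorization argument covers every fixed choice of constants, including the degenerate case $t=0$, with no casework. Since the lemma fixes $t$ and $d$, this costs nothing.
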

\begin{proof}
    We will use induction on $k$. If $k = 0$, then 
    \begin{equation*}
    \lim_{x \to 0} \frac{1}{x^k} \bigg(  (t + x)^d - (t - x)^d \bigg)^{k+1} = \lim_{x \to 0} (t+x)^d - (t-x)^d = 0.
    \end{equation*}
    Thus, \eqref{eq:limit_calculation_lemma} holds when $k = 0$. Now, suppose that \eqref{eq:limit_calculation_lemma} holds for some $k \in \mathbb{N}$. Using L'H\^opital's rule, we obtain that
    \begin{equation*}
    \begin{aligned}
        \lim_{x \to 0} \frac{1}{x^{k+1}} \bigg( (t + x)^d - (t - x)^d \bigg)^{k+2} &= \lim_{x \to 0} \frac{d(k+2)}{(k+1)x^{k}} \bigg( (t + x)^d - (t - x)^d \bigg)^{k+1} \bigg( (t+x)^{d-1} + (t-x)^{d-1} \bigg) \\
        &= \lim_{x \to 0} \frac{1}{(k+1)x^k}\bigg( (t + x)^d - (t - x)^d \bigg)^{k+1} \lim_{x \to 0} d(k+2)\bigg( (t+x)^{d-1} + (t-x)^{d-1} \bigg).
    \end{aligned}
    \end{equation*}
    Observe that the first limit is zero by the inductive hypothesis, while the second limit approaches a finite constant. Hence, (\ref{eq:limit_calculation_lemma}) also holds for $k+1$. Therefore, we can conclude that (\ref{eq:limit_calculation_lemma}) holds for all $k \in \N$.
\end{proof}

Now, we can show that all blocks are $(\alpha, \epsilon)$-distinguishing with high probability for suitable values of $\alpha$ and $\epsilon$.

\begin{lemma}
\label{lem:distinguishing_block_lemma}
    Fix $\delta > 0$. Then, there exists $\epsilon > 0$ such that any given $\delta$-occupied block is $(\delta / (2(d+1)), \epsilon)$-distinguishing with probability $1 - o(1/n)$. Consequently, all $\delta$-occupied blocks are $(\delta / (2(d+1)), \epsilon)$-distinguishing with probability $1 - o(1)$.
\end{lemma}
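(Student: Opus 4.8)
The plan is to fix a single $\delta$-occupied block $B_i$, show it fails to be $(\delta/(2(d+1)),\epsilon)$-distinguishing with probability $o(1/n)$, and then union-bound over the $\Theta(n/\log n)$ blocks to obtain the ``consequently'' statement with probability $1-o(1)$. The event to control is that \emph{some} graph vertex $v$ lying in the region $U_i$ has fewer than $\frac{\delta}{2(d+1)}\log n$ distinguishers in $V_i$. Since $U_i$ and $B_i$ occupy disjoint regions, the vertices falling in $U_i$ are independent of those in $V_i$, and, conditioned on $|V_i|=n_i$, the points of $V_i$ are i.i.d.\ uniform in $B_i$. I would therefore first bound, for a fixed location $v$, the probability $p_\epsilon(v) := \P\big(|\overline{f}_\text{diff}(\|u-v\|)| \le \epsilon\big)$ that a uniformly random $u \in B_i$ fails to $\epsilon$-distinguish $v$, where $\overline{f}_\text{diff} := \overline{f}_\text{in} - \overline{f}_\text{out}$.

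For this I would reuse the annulus computation from the proof of Proposition \ref{prop:initial_block_labeling}: by Assumption \ref{eq:function_assumption_4}, the distances $t$ with $|f_\text{diff}(t)| \le \epsilon$ lie in $\bigcup_{i=1}^m [t_i - \gamma(\epsilon)/2,\, t_i + \gamma(\epsilon)/2]$, so the set of $u \in B_i$ failing to distinguish $v$ is contained in a union of $m$ annuli about $v$ of total volume at most $\sum_{i=1}^m \nu_d\big((t_i+\gamma(\epsilon)/2)^d - (t_i-\gamma(\epsilon)/2)^d\big)\log n$. Dividing by the block volume $r^d\chi\log n$ gives a bound $p_\epsilon(v) \le p_\epsilon := \frac{\nu_d}{r^d\chi}\sum_{i=1}^m\big((t_i+\gamma(\epsilon)/2)^d - (t_i-\gamma(\epsilon)/2)^d\big)$ that is uniform in $v$ and, because $\gamma(\epsilon)\to 0$, tends to $0$ as $\epsilon \to 0$ (this is the $k=0$ instance of Lemma \ref{lem:limit_calculation_lemma}).

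The crux is the concentration step, where the direction of the tail bound is essential. Writing $Z$ for the number of vertices of $V_i$ that fail to $\epsilon$-distinguish a fixed $v$, the number of distinguishers is $n_i - Z$, and the bad event $\{n_i - Z < \frac{\delta}{2(d+1)}\log n\}$ equals $\{Z > n_i - \frac{\delta}{2(d+1)}\log n\} \subseteq \{Z > n_i/2\}$, the inclusion using $\frac{\delta}{2(d+1)}\log n \le \frac{n_i}{2}$ (valid since $n_i \ge \delta\log n$). Conditioned on $|V_i|=n_i$ we have $Z \sim \text{Binom}(n_i, p_\epsilon(v))$ with $p_\epsilon(v) \le p_\epsilon$, so $\E[Z] \le n_i p_\epsilon$ is \emph{small}. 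Applying the Binomial Chernoff bound (Lemma \ref{lem:binomial_Chernoff}) to this rare upper tail yields $\P(Z > n_i/2) \le \exp(-n_i\, C(\epsilon))$ with a rate $C(\epsilon) = \Theta\!\big(\log(1/p_\epsilon)\big) \to \infty$ as $\epsilon \to 0$; using $n_i \ge \delta\log n$ this is at most $n^{-\delta C(\epsilon)}$, uniformly over all $n_i \ge \delta\log n$. I would stress that one \emph{must} bound the upper tail of the vanishing-mean count $Z$ rather than the lower tail of the good count $n_i - Z$: the latter only gives an exponential rate of order $\delta$, which for small $\delta$ cannot be pushed above $1$, whereas the former's rate can be driven above any target by shrinking $\epsilon$.

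It then remains to assemble the union bounds. Choosing $\epsilon$ small enough that $\delta C(\epsilon) > 2$, the per-vertex failure probability is $n^{-\delta C(\epsilon)} = o(1/(n\log n))$. Since $\mathrm{Vol}(U_i) = \Theta(\log n)$, the number of graph vertices in $U_i$ is $\text{Pois}(\Theta(\log n))$; conditioning on this count being at most a large constant multiple of $\log n$ (whose complement has probability $o(1/n)$ by the Poisson Chernoff bound, Lemma \ref{lem:poisson_Chernoff}) and union-bounding over these vertices shows that $B_i$ fails to be distinguishing with probability $o(1/n)$. A final union bound over the $\Theta(n/\log n)$ blocks gives the claimed $1-o(1)$. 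The main obstacle is exactly the concentration step just described: the naive lower-tail estimate on the number of distinguishers is too weak for small $\delta$, and the argument hinges on recognizing that the equivalent upper-tail estimate on the small-mean count of non-distinguishers has an exponential rate that diverges as $\epsilon \to 0$, which is what lets us absorb both the $\Theta(\log n)$ vertices in $U_i$ and the $\Theta(n/\log n)$ blocks.
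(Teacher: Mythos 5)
Your proof is correct, but it takes a genuinely different route from the paper's. The paper proves a stronger statement: with probability $1-o(1/n)$, \emph{every point} of the region $U_i$ (not merely every graph vertex that happens to fall in $U_i$) has at least $\frac{\delta}{2(d+1)}\log n$ distinguishers in $V_i$. Since one cannot union-bound over uncountably many targets, the paper partitions $V_i$ into $\lfloor \delta\log n/(d+1)\rfloor$ groups of $d+1$ vertices, shows via a covering argument (hypercubes of volume $s\log n$, enlarged balls, the annuli $A_j'$, and Lemma \ref{lem:limit_calculation_lemma}) that a single group simultaneously distinguishes \emph{all} of $U_i$ with probability at least $1-\kappa$, and then applies the binomial Chernoff bound over the groups; the exponent in \eqref{eq:lack_distinguishing_vertices_prob} is of order $\frac{\delta}{2(d+1)}\log(1/(2\kappa))$, which diverges as $\kappa\to 0$. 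This is exactly the ``small-mean upper-tail'' mechanism you identified, applied at the group level rather than per vertex of $V_i$; your observation that additive Hoeffding on the count of distinguishers caps the exponent at order $\delta$ and therefore fails for small $\delta$ is correct, and it is the same reason the paper's rate must come from the $\log(1/(2\kappa))$ factor. Your route exploits that the definition quantifies only over graph vertices in $U_i$, of which there are $O(\log n)$ with probability $1-o(1/n)$, so a direct union bound over targets is available; this is more elementary (no covering argument, no $(d+1)$-grouping, no Lemma \ref{lem:limit_calculation_lemma}), at the cost of an extra Poisson tail estimate and the union bound over targets.

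One caveat worth flagging: under your reading, the event ``$B_i$ is $(\alpha,\epsilon)$-distinguishing'' is no longer measurable with respect to the locations of $V_i$ alone---it also depends on the Poisson points in the neighboring blocks. The paper's all-points version is arranged precisely so that the event $\calJ$ in Theorem \ref{thm:propagation_error_total} is determined by the configuration $C_{p(i_j)}$ being conditioned on, which is what makes the hypothesis of Proposition \ref{prop:propagation_error_vertex} a property of $C_{p(i_j)}$ and keeps the conditional-independence bookkeeping clean. With your version that bookkeeping needs a minor adjustment (e.g.\ conditioning additionally on the locations $L_{i_j}$ of the child block, which is harmless since Proposition \ref{prop:propagation_error_vertex} only uses edge randomness given locations). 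This does not affect the correctness of your proof of the lemma as literally stated, but it is the structural reason the paper proves the stronger, purely geometric statement.
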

\begin{proof}
    Let $S = \{u_1, u_2, \ldots, u_{d+1}\}$ be a set of $d + 1$ points chosen uniformly at random from a $\delta$-occupied block $B_i$, and let $\calE(S)$ be the event that for all $v \in U_i$, there is at least one point in $S$ which $\epsilon$-distinguishes $v$. First, we will show that for any constant $\kappa > 0$, there exists $\epsilon > 0$ such that $\P(\calE(S)) \geq 1 - \kappa$. 

    To accomplish this, we fix $\kappa > 0$ and recall Assumption \ref{eq:function_assumption_4}, which states that
    \begin{equation*}
    \Big\{ 0 < t < r: |f_\text{diff}(t)| \leq \epsilon \Big\} \subseteq \bigcup_{k=1}^m \Big[ t_k-\frac{\gamma(\epsilon)}{2}, t_k + \frac{\gamma(\epsilon)}{2} \Big]
    \end{equation*}
    where $\gamma(\epsilon) \to 0$ as $\epsilon \to 0$. Thus, we see that the region not $\epsilon$-distinguished by a point $u_\ell \in S$, which we denote $R(u_\ell)$, is contained within a set of $m$ annuli centered at $u_\ell$ with inner radii $(t_k - \gamma(\epsilon)/2)(\log n)^{1/d}$ and outer radii $(t_k + \gamma(\epsilon)/2)(\log n)^{1/d}$ for $k = 1, 2, \ldots, m$. That is,
    \begin{equation*}
        R(u_\ell) \subseteq \bigcup_{k=1}^m \left\{ x \in U_i: \Big(t_k - \frac{\gamma(\epsilon)}{2}\Big) (\log n)^{1/d} \leq \|x - u_\ell\| \leq \Big(t_k + \frac{\gamma(\epsilon)}{2}\Big) (\log n)^{1/d} \right\}.
    \end{equation*}
    Hence, the region not $\epsilon$-distinguished by any point in $S$, which we denote $R(S)$, is contained within the union of $R(u_\ell)$ for $\ell = 1, 2, \ldots, d+1$. That is,
    \begin{equation*}
        R(S) \subseteq \bigcap_{\ell = 1}^{d+1} \bigcup_{k=1}^m \left\{ x \in U_i: (t_k - \frac{\gamma_k(\epsilon)}{2}) (\log n)^{1/d} \leq \|x - u_\ell\| \leq (t_k + \frac{\gamma_k(\epsilon)}{2}) (\log n)^{1/d} \right\}.
    \end{equation*}
    Now, observe that the event $\calE(S)$ occurs if and only if $R(S) = \varnothing$. Therefore, to show that $\P(\calE(S)) \geq 1 - \kappa$, we can equivalently show that $\P(R(S) \neq \varnothing) < \kappa$.
    
    To do this, we will divide $U_i$ into hypercubes of volume $s \log n$ (where $s$ is a constant that we will determine later) using the following procedure: First, observe that $U_i$ is contained within a sphere of radius $r (\log n)^{1/d}$ because every point in $U_i$ must be visible to the center of $B_i$. Therefore, we can enclose $U_i$ in a hypercube $U_i'$ of side length $2r (\log n)^{1/d}$. We then divide $U_i'$ into hypercubes of volume $s \log n$, which we call $C_1, C_2, \ldots, C_{(2r)^d/s}$.

    To find an upper bound for $\P(R(S) \neq \varnothing)$, we will first upper-bound $\P(R(S) \cap C_j \neq \varnothing)$ and then apply the union bound. For any fixed set $C_j$, define the set
    \begin{equation*}
        A_j := \bigcup_{k=1}^m \left\{ y \in B_i : \exists w \in C_j \text{ such that } \Big(t_k - \frac{\gamma(\epsilon)}{2}\Big) (\log n)^{1/d} \leq \|y - w\| \leq \Big(t_k + \frac{\gamma(\epsilon)}{2}\Big) (\log n)^{1/d}  \right\}
    \end{equation*}
    which is the set of points $y \in B_i$ which do not $\epsilon$-distinguish $C_j$. We claim that $\P(R(S) \cap C_j \neq \varnothing) \leq \P(u_1, u_2, \ldots, u_{d+1} \in A_j)$. To see this, observe that if $\{R(S) \cap C_j \neq \varnothing \}$, then there exists a point $w \in C_j$ which is not $\epsilon$-distinguished by any point in $S$. Thus, no points in $S$ can $\epsilon$-distinguish $C_j$, which shows that $u_1, u_2, \ldots, u_{d+1} \in A_j$. 

    Therefore, our task becomes upper-bounding $\P(u_1, u_2, \ldots, u_{d+1} \in A_j)$. Observe that $C_j$ is contained within a sphere $C_j'$ of radius $\sqrt{d}(s \log n)^{1/d}/2$. Then, we analogously define $A_j'$ as the set
    \begin{equation*}
        A_j' := \bigcup_{k=1}^m \left\{ y \in B_i : \exists w \in C_j' \text{ such that } \Big(t_k - \frac{\gamma(\epsilon)}{2}\Big) (\log n)^{1/d} \leq \|y - w\| \leq \Big(t_k + \frac{\gamma(\epsilon)}{2}\Big) (\log n)^{1/d}  \right\}
    \end{equation*}
    which is the set of points $y \in B_i$ which do not $\epsilon$-distinguish $C_j'$. If we let $w_0$ be the center of $C_j'$, then we can concisely express $A_j'$ as
    \begin{equation*}
        A_j' = \bigcup_{k=1}^m  \left\{ y \in B_i: \bigg(t_k - \frac{\gamma(\epsilon)}{2} - \frac{\sqrt{d}}{2}s^{1/d}\bigg) (\log n)^{1/d} \leq \|y - w_0 \| \leq \bigg(t_k + \frac{\gamma(\epsilon)}{2} + \frac{\sqrt{d}}{2}s^{1/d}\bigg) (\log n)^{1/d} \right\}
    \end{equation*}
    which consists of $m$ annuli with inner radii $(t_k - \gamma(\epsilon)/2 - s^{1/d}\sqrt{d}/2) (\log n)^{1/d}$ and outer radii $(t_k + \gamma(\epsilon)/2 + s^{1/d}\sqrt{d}/2) (\log n)^{1/d}$ for $k = 1, 2, \ldots, m$. Now, note that $A_j \subset A_j'$ because any point which does not $\epsilon$-distinguish $C_j$ also does not $\epsilon$-distinguish the larger set $C_j'$. Therefore, we have that $\P(u_1, u_2, \ldots, u_{d+1} \in A_j) \leq \P(u_1, u_2, \ldots, u_{d+1} \in A_j')$. 
    
    We can derive an explicit upper bound for the latter probability. Since $A_j'$ consists of $m$ annuli,
    \begin{equation*}
        \text{Vol}(A_j') \leq \nu_d \log n \sum_{k=1}^m  \bigg(t_k + \frac{\gamma(\epsilon)}{2} + \frac{\sqrt{d}}{2}s^{1/d}\bigg)^d - \bigg(t_k - \frac{\gamma(\epsilon)}{2} - \frac{\sqrt{d}}{2}s^{1/d}\bigg)^d.
    \end{equation*}
    Letting $\overline{t} := \max_k t_k$, we have that
    \begin{equation*}
        \text{Vol}(A_j') \leq m \nu_d \log n \Bigg( \bigg(\overline{t} + \frac{\gamma(\epsilon)}{2} + \frac{\sqrt{d}}{2}s^{1/d}\bigg)^d - \bigg(\overline{t} - \frac{\gamma(\epsilon)}{2} - \frac{\sqrt{d}}{2}s^{1/d}\bigg)^d \Bigg).
    \end{equation*}
    Then, we use the fact that $u_1, u_2, \ldots, u_{d+1}$ are distributed independently and uniformly at random within a block $B_i$ of volume $r^d \chi \log n$ to obtain that
    \begin{equation*}
        \P(u_1, u_2, \ldots, u_{d+1} \in A_j') = \frac{\text{Vol}(A_j')}{r^d \chi \log n} \leq \left[ \frac{m \nu_d}{r^d \chi} \Bigg( \bigg(\overline{t} + \frac{\gamma(\epsilon)}{2} + \frac{\sqrt{d}}{2}s^{1/d}\bigg)^d - \bigg(\overline{t} - \frac{\gamma(\epsilon)}{2} - \frac{\sqrt{d}}{2}s^{1/d}\bigg)^d \Bigg) \right]^{d+1}.
    \end{equation*}
    Since $\gamma(\epsilon) \to 0$ as $\epsilon \to 0$ by assumption, we can choose $\epsilon > 0$ such that $\gamma(\epsilon) \leq \sqrt{d}s^{1/d}$, which yields
    \begin{equation}
    \label{eq:probability_in_Aj'}
        \P(u_1, u_2, \ldots, u_{d+1} \in A_j') \leq \left[ \frac{m \nu_d}{r^d \chi} \bigg( (\overline{t} + \sqrt{d}s^{1/d})^d - (\overline{t} - \sqrt{d}s^{1/d})^d \bigg) \right]^{d+1}.
    \end{equation}
    
    Now, we show how (\ref{eq:probability_in_Aj'}) gives us the desired upper bound for $\P(R(S) \neq \varnothing)$. In the preceding discussion, we established that $\P(R(S) \cap C_j \neq \varnothing) \leq \P(u_1, u_2, \ldots, u_{d+1} \in A_j')$. Therefore, we obtain
    \begin{equation*}
        \P(R(S) \cap C_j \neq \varnothing) \leq \left[ \frac{m \nu_d}{r^d \chi} \bigg( (\overline{t} + \sqrt{d}s^{1/d})^d - (\overline{t} - \sqrt{d}s^{1/d})^d \bigg) \right]^{d+1}.
    \end{equation*}
    Then, since we divided $U_i$ into $(2r)^d/s$ hypercubes $C_j$, we can apply the union bound to obtain
    \begin{equation*}
    \begin{aligned}
        \P(R(S) \neq \varnothing) &\leq \frac{(2r)^d}{s} \P(R(S) \cap C_j \neq \varnothing) \\
        &\leq \frac{(2r)^d}{s}\left[ \frac{m \nu_d}{r^d \chi} \bigg( (\overline{t} + \sqrt{d}s^{1/d})^d - (\overline{t} - \sqrt{d}s^{1/d})^d \bigg) \right]^{d+1}.
    \end{aligned}
    \end{equation*}
    Letting $s' = \sqrt{d}s^{1/d}$, we obtain 
    \begin{equation*}
    \begin{aligned}
        \P(R(S) \neq \varnothing) &\leq \frac{(2r\sqrt{d})^d}{(s')^d} \left[ \frac{m \nu_d}{r^d \chi} \bigg( (\overline{t} + s')^d - (\overline{t} - s')^d \bigg) \right]^{d+1} \\
        &= \frac{(2r\sqrt{d})^d(m \nu_d)^{d+1}}{(r^d \chi)^{d+1}} \cdot \frac{1}{(s')^d} \bigg[ (\overline{t}+s')^d - (\overline{t}-s')^d \bigg]^{d+1}
    \end{aligned}
    \end{equation*}
    Now, observe that the first term is constant and the second term goes to zero as $s' \to 0$ due to Lemma \ref{lem:limit_calculation_lemma}. Hence, we can conclude that
    \begin{equation*}
        \lim_{s' \to 0} \P(R(S) \neq \varnothing) = 0
    \end{equation*}
    Therefore, we can choose $s > 0$ such that $\P(R(S) \neq \varnothing) < \kappa$, which implies that $\P(\calE(S)) \geq 1 - \kappa$.

    Now, we show that block $B_i$ is $(\delta/(2(d+1)), \epsilon)$-distinguishing with probability $1 - o(1/n)$. Since $B_i$ is $\delta$-occupied, we can partition $V_i$ into at least $\floor{\delta \log n / (d+1)}$ sets of $d+1$ vertices, which we call $S_1, S_2, \ldots, S_{\floor{\delta \log n / (d+1)}}$. For clarity, we will omit the floor function for the remainder of the proof. Now, we define $N$ as the number of sets $S_j$ for which $\calE(S_j)$ holds. Observe that $N$ lower-bounds the number of vertices that $\epsilon$-distinguish any given point $v \in U_i$ because if $\calE(S_j)$ holds, then there's at least one vertex in $S_j$ which $\epsilon$-distinguishes $v$. Therefore, we just need to show that $N \geq \delta \log n / (2(d+1))$ with probability $1 - o(1/n)$. 
    
    Recall that for any $\kappa > 0$, we have that $\P(\calE(S_j)) > 1 - \kappa$ for any given set $S_j$. In addition, note that $\calE(S_j)$ and $\calE(S_k)$ are independent for $S_j \neq S_k$ because the locations of the vertices in $B_i$ are independent. Therefore, we have that $N \succeq \text{Binom}(\delta \log n/(d+1), 1 - \kappa)$. Then, we can apply the Chernoff bound (Lemma \ref{lem:binomial_Chernoff}) to obtain
    \begin{align}
    \label{eq:lack_distinguishing_vertices_prob}
        \P\left(\text{Binom}\left(\frac{\delta \log n}{d+1}, 1 - \kappa\right) \leq \frac{\delta \log n}{2(d+1)}\right) &= \P\left(\text{Binom}\left(\frac{\delta \log n}{d+1}, \kappa \right) \geq \frac{\delta \log n}{2(d+1)}\right) \notag \\
        &\leq \left( \frac{e^{(1/(2\kappa)) - 1}}{(1/(2\kappa))^{(1/(2\kappa))}}\right)^{(\delta \kappa \log n)/(d+1)} \notag \\
        &= \exp\left(\log\left(\frac{e^{(1/(2\kappa)) - 1}}{(1/(2\kappa))^{(1/(2\kappa))}}\right) \frac{\delta \kappa \log n}{d+1}\right) \notag \\
        &= \exp\left(\left(\frac{1}{2\kappa} - 1 - \frac{1}{2\kappa}\log\left(\frac{1}{2\kappa}\right)\right) \frac{\delta \kappa \log n}{d+1}\right) \notag \\
        &= \exp\left( \left(1 - 2\kappa - \log\left(\frac{1}{2\kappa}\right) \right) \frac{\delta}{2(d+1)} \right) \notag \\
        &= n^{\big( 1 - 2\kappa - \log(1/(2\kappa)) \big) \delta/(2(d+1))}.
    \end{align}
    Choosing $\kappa > 0$ small enough, we obtain that \eqref{eq:lack_distinguishing_vertices_prob} is $o(1/n)$. Then, recalling that $N \succeq \text{Binom}(\delta \log n/(d+1), 1 - \kappa)$, we have
    \begin{equation*}
        \P\left(N \leq \frac{\delta \log n}{2(d+1)}\right) = o(1/n).
    \end{equation*}
    
    Therefore, we see that $N \geq (\delta \log n) / (2(d+1))$ with probability $1 - o(1/n)$, which proves that $B_i$ is $(\delta/(2(d+1)), \epsilon)$-distinguishing with probability $1 - o(1/n)$. Finally, since there are $n / (r^d \chi \log n)$ total blocks, we can apply the union bound to obtain that all $\delta$-occupied blocks are $(\delta/2(d+1)), \epsilon)$-distinguishing with probability $1 - o(1)$.
\end{proof} 

Now, we show that $\hat{\sigma}$ makes a constant number of mistakes in all $\delta$-occupied blocks with probability $1 - o(1)$. To show this, we will adapt the notations and proofs in Section 4.3 of \cite{Gaudio+2024}. Let $C_{p(i)}$ denote the location, true label $\sigma^*(v)$, and estimated label $\hat{\sigma}(v)$ of each vertex in $v \in V_{p(i)}$, which we call the configuration of $V_{p(i)}$.

\begin{prop}
\label{prop:propagation_error_vertex}
Let $\delta > 0$ and $\epsilon > 0$ be constants. Define $M := 65(d+1)/(\epsilon^2 \delta)$, $\eta_2 := \exp(M \epsilon (1+\epsilon)/4 - M^2 (1+\epsilon)^2/8)$, and $c_2 := \epsilon^2\delta / (64(d+1))$. Let $B_i$ be a $\delta$-occupied block with vertices $V_i$, and suppose that its parent block is $B_{p(i)}$ with vertices $V_{p(i)}$. Suppose that $V_{p(i)}$ has configuration $C_{p(i)}$ such that $\hat{\sigma}$ makes at most $M$ mistakes on $V_{p(i)}$ and there are $n_2 \geq \delta \log n / (2(d+1))$ vertices in $V_{p(i)}$ which $\epsilon$-distinguish any given $v \in V_i$. Suppose that $\hat{\sigma}$ labels $V_i$ using \texttt{Propagate} on input $(G, V_{p(i)}, V_i, f_\text{in}, f_\text{out})$. Then, the probability that $\hat{\sigma}$ labels any given vertex $v \in V_i$ incorrectly is
\begin{equation*}
    \P(\hat{\sigma}(v) \neq \sigma^*(u_0) \sigma^*(v) \mid C_{p(i)} ) \leq \eta_2 n^{-c_2}.
\end{equation*}
Furthermore, suppose that $|V_i| \leq \Delta \log n$ for some constant $\Delta$, and let $\eta_3 := e^M (\eta_2 \Delta / M)^M$. Then, the probability that $\hat{\sigma}$ makes more than $M$ mistakes on $V_i$ is 
\begin{equation*}
    \P\bigg( | \{v \in V_i: \hat{\sigma}(v) \neq \sigma^*(u_0) \sigma^*(v)\} | > M \: \Big| \: C_{p(i)}, |V_i| \leq \Delta \log n \bigg) \leq \eta_3 n^{-129/128}
\end{equation*}
\end{prop}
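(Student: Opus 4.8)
The plan is to reduce the per-vertex bound to Hoeffding's inequality after pinning down the conditional mean of $Y_v$, and then to lift it to the whole block via independence and a binomial tail bound. By the symmetry of the model I may assume $\sigma^*(u_0)=+1$, so that $\hat\sigma$ agrees with $\sigma^*$ on all but at most $M$ vertices of $V_{p(i)}$ and the goal becomes showing $\hat\sigma(v)=\sigma^*(v)$. Fix $v\in V_i$ and condition on $C_{p(i)}$, the location of $v$, and $\sigma^*(v)$. Writing $\overline f_\text{diff}:=\overline f_\text{in}-\overline f_\text{out}$, I would use $\P(u\sim v\mid L_{p(i)},L_i)=\tfrac12(\overline f_\text{in}+\overline f_\text{out})(\|u-v\|)$ together with $\E[\1(u\sim v)\mid \sigma^*(u),\sigma^*(v)]=\tfrac12(\overline f_\text{in}+\overline f_\text{out})+\tfrac12\sigma^*(u)\sigma^*(v)\,\overline f_\text{diff}$ to show, via a short case check on the sign correction $\beta_u$, that in \emph{both} branches of \texttt{Propagate} each correctly labeled $u$ in the summation set $W$ contributes $+\tfrac12\sigma^*(v)\,|\overline f_\text{diff}(\|u-v\|)|$ to $\E[Y_v]$, while each of the at most $M$ mislabeled $u$ contributes $-\tfrac12\sigma^*(v)\,|\overline f_\text{diff}(\|u-v\|)|$.

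Because \texttt{Propagate} sums over the majority-sign $\epsilon$-distinguishing vertices, $|W|\ge n_2/2$; moreover $u\dist v$ forces $|\overline f_\text{diff}|>\epsilon$ while Assumption \ref{eq:function_assumption_2} gives $|\overline f_\text{diff}|<1$. Hence
\begin{equation*}
 \sigma^*(v)\,\E[Y_v\mid \sigma^*(v),C_{p(i)}]\;\ge\;\tfrac12\big((|W|-M)\epsilon-M\big)\;=\;\Omega(\log n).
\end{equation*}
Conditioned on this information $Y_v$ is a sum of $|W|$ independent terms, each of the form $\beta_u(\1(u\sim v)-\P(u\sim v\mid L_{p(i)},L_i))\in[-1,1]$, so Hoeffding's inequality gives $\P(\sigma^*(v)Y_v\le 0\mid\cdots)\le\exp(-\E[Y_v]^2/(2|W|))$. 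Expanding this exponent and inserting $n_2\ge\delta\log n/(2(d+1))$ produces the leading rate $n^{-c_2}$ together with the constant factor $\eta_2$; averaging over $\sigma^*(v)\in\{\pm1\}$ then yields the first claim, $\P(\hat\sigma(v)\neq\sigma^*(u_0)\sigma^*(v)\mid C_{p(i)})\le\eta_2 n^{-c_2}$.

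For the block bound, the key structural point is that, conditioned on $C_{p(i)}$ and on the locations of the vertices of $V_i$, the pairs $(\hat\sigma(v),\sigma^*(v))$ for distinct $v\in V_i$ are mutually independent: each is determined by the label $\sigma^*(v)$ and by the edges from $v$ to $V_{p(i)}$, which are disjoint across $v$, while the branch selected by \texttt{Propagate} is a deterministic function of $(C_{p(i)},L_i)$. Thus the number of mistakes in $V_i$ is stochastically dominated by $\mathrm{Binom}(|V_i|,\eta_2 n^{-c_2})$, and on $\{|V_i|\le\Delta\log n\}$ by $\mathrm{Binom}(\Delta\log n,\eta_2 n^{-c_2})$. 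Since more than $M$ mistakes forces some set of $M$ vertices to be simultaneously mislabeled, a union bound over the $\binom{\Delta\log n}{M}$ such sets gives
\begin{equation*}
 \P(\text{more than }M\text{ mistakes}\mid\cdots)\le\binom{\Delta\log n}{M}\big(\eta_2 n^{-c_2}\big)^{M}\le e^{M}\Big(\tfrac{\eta_2\Delta}{M}\Big)^{M}(\log n)^{M}\,n^{-Mc_2}=\eta_3\,(\log n)^M\,n^{-65/64},
\end{equation*}
using $\binom{\Delta\log n}{M}\le (e\Delta\log n/M)^M$ and $Mc_2=65/64$. As $65/64>129/128$, the factor $(\log n)^M=n^{o(1)}$ is absorbed for large $n$, yielding the stated $\eta_3 n^{-129/128}$.

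The step demanding the most care is the conditional-mean computation above: one must verify that $\mathrm{sign}(\E[Y_v])=\sigma^*(v)$ in \emph{both} branches of \texttt{Propagate}, and that the adversarial contribution of the up-to-$M$ mislabeled parent vertices is overwhelmed by the $\Omega(\log n)$ signal coming from the correctly labeled $\epsilon$-distinguishing vertices. This is exactly where the hypothesis $n_2\ge\delta\log n/(2(d+1))$ (abundant distinguishing vertices), the definition of $\dist$ (each worth at least $\epsilon/2$), and Assumption \ref{eq:function_assumption_2} (each mistake costing at most $1/2$) combine to force the required signal-to-noise ratio; the remaining bookkeeping that pins down the exact constants $\eta_2$, $c_2$, $M$, and $\eta_3$ is routine.
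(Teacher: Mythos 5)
Your proposal is correct and follows essentially the same route as the paper's proof: it conditions on $C_{p(i)}$, computes $\E[Y_v]$ by splitting the majority-sign summation set into correctly and incorrectly labeled parent vertices (your unified $\sigma^*(v)$-sign bookkeeping is just a compressed version of the paper's two-branch case analysis), applies Hoeffding with the same $\frac{1}{4}n_2\epsilon - \frac{1}{2}M(1+\epsilon)$ mean bound, and uses conditional independence of per-vertex errors to get the block bound. Your union bound over $M$-subsets, $\binom{\Delta \log n}{M}(\eta_2 n^{-c_2})^M \leq e^M(\eta_2 \Delta/M)^M (\log n)^M n^{-65/64}$, is numerically identical to the paper's multiplicative Chernoff step, so the difference is purely cosmetic.
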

\begin{proof}
First, consider the case where $\hat{\sigma}$ labels more vertices that $\epsilon$-distinguish $v$ as $+1$ than $-1$ in $V_{p(i)}$. We start by using the law of total probability, which shows that
\begin{equation}
\label{eq:propagate_mistake_prob}
    \P(\hat{\sigma}(v) \neq \sigma^*(u_0)\sigma^*(v) \mid C_{p(i)} ) = \frac{1}{2}\P(Y_v \leq 0 \mid C_{p(i)}, \sigma^*(v) = \sigma^*(u_0)) + \frac{1}{2}\P(Y_v \geq 0 \mid C_{p(i)}, \sigma^*(v) \neq \sigma^*(u_0))
\end{equation}
Now, we analyze each term individually. Let us consider the first term $\P(Y_v \leq 0 \mid C_{p(i)}, \sigma^*(v) = \sigma^*(u_0))$. To bound this probability, we first compute the expectation $\E[Y_v \mid C_{p(i)}, \sigma^*(v) = \sigma^*(u_0)]$. Define the sets 
\begin{equation*}
    R_+(v) := \{u \in V_{p(i)} : \hat{\sigma}(u) = +1, u \dist v, \sigma^*(u) = \sigma^*(u_0) \}
\end{equation*}
and
\begin{equation*}
    R_-(v) := \{u \in V_{p(i)}, \hat{\sigma}(u) = +1, u \dist v, \sigma^*(u) \neq \sigma^*(u_0)\}.
\end{equation*} 
That is, out of all the vertices $u$ in the parent block $V_{p(i)}$ which $\hat{\sigma}$ labels as $+1$ and which $\epsilon$-distinguish $v$, the set $R_+(v)$ contains those which have the same ground truth label as $u_0$ while the set $R_-(v)$ contains those which have the opposite ground truth label as $u_0$. 
Letting $L_{p(i)}$ and $L_i$ denote the locations of the vertices in $V_{p(i)}$ and $V_i$ respectively, observe that
\begin{align}
\label{eq:condition_expectation_Yv}
    & \E[ Y_v \mid C_{p(i)}, \sigma^*(v) = \sigma^*(u_0) ] \notag \\
    &= \sum_{u \in R_+(v)} \beta_u \Big( \P(u \sim v \mid L_{p(i)}, L_i, \sigma^*(u) = \sigma^*(u_0), \sigma^*(v) = \sigma^*(u_0)) - \P(u \sim v \mid L_{p(i)}, L_i) \Big) \notag \\
    &\qquad + \sum_{u \in R_-(v)} \beta_u \Big( \P(u \sim v \mid L_{p(i)}, L_i, \sigma^*(u) \neq \sigma^*(u_0), \sigma^*(v) = \sigma^*(u_0)) - \P(u \sim v \mid L_{p(i)}, L_i) \Big) \notag \\
    &= \sum_{u \in R_+(v)} \frac{1}{2}\beta_u \Big(\overline{f}_\text{in}(\|u-v\|) - \overline{f}_\text{out}(\|u-v\|) \Big) + \sum_{u \in R_-(v)} \frac{1}{2}\beta_u \Big(\overline{f}_\text{out}(\|u-v\|) - \overline{f}_\text{in}(\|u-v\|) \Big) \notag \\
    &= \sum_{u \in R_+(v)} \frac{1}{2} \Big| \overline{f}_\text{in}(\|u-v\|) - \overline{f}_\text{out}(\|u-v\|) \Big| - \sum_{u \in R_-(v)} \frac{1}{2} \Big| \overline{f}_\text{in}(\|u-v\|) - \overline{f}_\text{out}(\|u-v\|) \Big|.
\end{align}
We examine each summation of (\ref{eq:condition_expectation_Yv}) separately. For the first summation, we have that $|\overline{f}_\text{in}(\|u-v\|) - \overline{f}_\text{out}(\|u-v\|)| > \epsilon$ for each term in the summation because we are summing over $u \dist v$. Since we are considering the case where $\hat{\sigma}$ labels at least half of the vertices which $\epsilon$-distinguish $v$ as $+1$, there are at least $n_2/2$ vertices in $V_{p(i)}$ which $\hat{\sigma}$ labels as $+1$ and $\epsilon$-distinguish $v$. At most $M$ of these are mistakes, so we obtain
\begin{equation*}
    \sum_{u \in R_+(v)} \frac{1}{2} \Big| \overline{f}_\text{in}(\|u-v\|) - \overline{f}_\text{out}(\|u-v\|) \Big| > \frac{1}{2}\left(\frac{n_2}{2} - M\right) \epsilon.
\end{equation*}
For the second summation, we use the fact that there are at most $M$ mistakes, which yields the trivial bound
\begin{equation*}
    \sum_{u \in R_-(v)} \frac{1}{2} \Big| \overline{f}_\text{in}(\|u-v\|) - \overline{f}_\text{out}(\|u-v\|) \Big| < \frac{1}{2}M.
\end{equation*}
Therefore, we obtain that
\begin{equation}
\label{eq:condition_expectation_Yv_simplified}
\begin{aligned}
    \E[Y_v \mid C_{p(i)}, \sigma^*(v) = \sigma^*(u_0)] &> \frac{1}{2}\left(\frac{n_2}{2} - M\right) \epsilon - \frac{1}{2}M \\
    &= \frac{1}{4} n_2 \epsilon -\frac{1}{2}M(1 + \epsilon).
\end{aligned}
\end{equation}
Now, we can apply Hoeffding's inequality. Since $Y_v$ is the sum of independent random variables bounded between $-1$ and $1$, we have that
\begin{equation*}
\begin{aligned}
    \P(Y_v \leq 0 \mid C_{p(i)}, \sigma^*(v) = \sigma^*(u_0)) &\leq \exp \left( -2 \frac{(n_2 \epsilon / 4 - M(1+\epsilon)/2)^2}{4n_2} \right) \\
    &= \exp\left(- \frac{(n_2\epsilon - 2M(1+\epsilon))^2}{32n_2}\right) \\
    &= \exp\left(- \frac{\epsilon^2}{32}n_2 + \frac{M \epsilon (1+\epsilon)}{8} - \frac{M^2(1+\epsilon)^2}{8n_2}\right) \\
    &\leq \exp\left(- \frac{\epsilon^2}{32}n_2 + \frac{M \epsilon (1+\epsilon)}{8} - \frac{M^2(1+\epsilon)^2}{8}\right) \\
    &= \exp\left(\frac{M \epsilon (1+\epsilon)}{8} - \frac{M^2(1+\epsilon)^2}{8}\right) \exp\left(- \frac{\epsilon^2}{32} n_2\right) \\
    &= \eta_2 \exp\left(-\frac{\epsilon^2}{32}n_2\right)
\end{aligned}
\end{equation*}
where the second inequality follows from $n_2 \geq 1$ because it represents a number of vertices. Then, we use the fact that $n_2 \geq \delta \log n / (2(d+1))$ when conditioned on $C_{p(i)}$, which yields
\begin{align}
\label{eq:propagate_error_prob_cond1}
    \P(Y_v \leq 0 \mid C_{p(i)}, \sigma^*(v) = \sigma^*(u_0)) &\leq \eta_2 \exp\left(-\frac{\epsilon^2 \delta}{64(d+1)} \log n\right) \notag \\
    &= \eta_2 n^{-\epsilon^2\delta/(64(d+1))} \notag \\
    &= \eta_2 n^{-c_2}.
\end{align}
Therefore, we obtain an upper bound for the first term in (\ref{eq:propagate_mistake_prob}). Using a similar argument, we can also show that 
\begin{equation}
\label{eq:propagate_error_prob_cond2}
    \P(Y_v \geq 0 \mid C_{p(i)}, \sigma^*(v) \neq \sigma^*(u_0)) \leq \eta_2 n^{-c_2},
\end{equation}
which provides an upper bound for the second term in (\ref{eq:propagate_mistake_prob}). Therefore, we can substitute (\ref{eq:propagate_error_prob_cond1}) and (\ref{eq:propagate_error_prob_cond2}) into (\ref{eq:propagate_mistake_prob}) to obtain that
\begin{equation*}
    \P(\hat{\sigma}(v) \neq \sigma^*(u_0)\sigma^*(v) \mid C_{p(i)}) \leq \eta_2 n^{-c_2}.
\end{equation*}
Now, we consider the other case where $\hat{\sigma}$ labels more vertices as $-1$ than $+1$ in $V_{p(i)}$; the analysis is virtually identical. Define the sets 
\begin{equation*}
    S_+(v) := \{u \in V_{p(i)} : \hat{\sigma}(u) = -1, u \dist v, \sigma^*(u) = \sigma^*(u_0) \}
\end{equation*}
and
\begin{equation*}
    S_-(v) := \{u \in V_{p(i)}, \hat{\sigma}(u) = -1, u \dist v, \sigma^*(u) \neq \sigma^*(u_0)\}.
\end{equation*}
Using a similar calculation as the one leading up to \eqref{eq:condition_expectation_Yv_simplified}, we obtain
\begin{equation*}
\begin{aligned}
    & \E[ Y_v \mid C_{p(i)}, \sigma^*(v) = \sigma^*(u_0) ] \\
    &= \sum_{u \in S_+(v)} \frac{1}{2} \Big| \overline{f}_\text{in}(\|u-v\|) - \overline{f}_\text{out}(\|u-v\|) \Big| - \sum_{u \in S_-(v)} \frac{1}{2} \Big| \overline{f}_\text{in}(\|u-v\|) - \overline{f}_\text{out}(\|u-v\|) \Big| \\
    &> \frac{1}{4}n_2 \epsilon - \frac{1}{2}M(1+\epsilon).
\end{aligned}
\end{equation*}
Then, we can apply Hoeffding's inequality. Using the same calculation as the one leading up to \eqref{eq:propagate_error_prob_cond1} yields
\begin{equation*}
    \P(Y_v \leq 0 \mid C_{p(i)}, \sigma^*(v) = \sigma^*(u_0)) \leq \eta_2 n^{-c_2}.
\end{equation*}
A similar argument shows that 
\begin{equation*}
    \P(Y_v \geq 0 \mid C_{p(i)}, \sigma^*(v) \neq \sigma^*(u_0)) \leq \eta_2 n^{-c_2}.
\end{equation*}
Therefore,
\begin{equation*}
    \P(\hat{\sigma}(v) \neq \sigma^*(u_0)\sigma^*(v) \mid C_{p(i)}) \leq \eta_2 n^{-c_2}.
\end{equation*}

This provides an upper-bound on the probability of labeling one vertex incorrectly. Now, we upper-bound the probability of making more than $M$ mistakes on $V_i$. Let $K_i := | \{v \in V_i: \hat{\sigma}(v) \neq \sigma^*(u_0) \sigma^*(v)\} |$ be the number of mistakes that $\hat{\sigma}$ makes on $V_i$, and let $\calE_v := \{\hat{\sigma}(v) \neq \sigma^*(u_0) \sigma^*(v)\}$ be the event that a particular vertex $v$ is mislabeled by $\hat{\sigma}$. Observe that conditioned on $C_{p(i)}$, the events $\calE_v$ for $v \in V_i$ are independent because we use disjoint sets of edges, which are independently generated conditioned on $C_{p(i)}$, to classify different vertices. Therefore, conditioned on $C_{p(i)}$ and $|V_i| \leq \Delta \log n$, we have that $K_i \preceq \text{Binom}(\Delta \log n, \eta_2 n^{-c_2})$. Letting $\mu := \eta_2 n^{-c_2} \Delta \log n$, we then apply the Chernoff bound (Lemma \ref{lem:binomial_Chernoff}) to obtain
\begin{equation*}
\begin{aligned}
    \P(K_i > M \mid C_{p(i)}, |V_i| \leq \Delta \log n) &\leq \bigg(\frac{e^{(M/\mu) - 1}}{(M/\mu)^{(M/\mu)}}\bigg)^\mu \\
    &\leq e^M \Big(\frac{\mu}{M}\Big)^M \\
    &= e^M \Big(\frac{\eta_2 n^{-c_2} \Delta \log n}{M}\Big)^M \\
    &= e^M \Big(\frac{\eta_2 \Delta}{M}\Big)^M (\log n)^M n^{-c_2 M}.
\end{aligned}
\end{equation*}
Examining each term in the product above, we see that $e^M$ and $(\eta_2 \Delta / M)^M$ are constants in $n$. Therefore, we obtain that
\begin{equation*}
    \P(K_i > M \mid C_{p(i)}, |V_i| \leq \Delta \log n) \leq \eta_3 (\log n)^M n^{-c_2 M}
\end{equation*}
where $\eta_3 := e^M (\eta_2 \Delta / M)^M$. Then, we make two observations: First, observe that $n^{-c_2 M} = n^{-65/64}$ because $c_2 = (\epsilon^2 \delta) / (64(d+1))$ while $M = (65(d+1))/(\epsilon^2 \delta)$. Second, we know that $(\log n)^M < n^{1/128}$ for sufficiently large $n$. Therefore, we have that
\begin{equation*}
    \P(K_i > M \mid C_{p(i)}, |V_i| \leq \Delta \log n) \leq \eta_3 n^{-129/128} = o(1/n).
\end{equation*}
which shows that the probability of making more than $M$ mistakes on $V_i$ is $o(1/n)$.
\end{proof}

Using Proposition \ref{prop:propagation_error_vertex}, we can show that the Phase I labeling $\hat{\sigma}$ makes at most $M$ mistakes in all $\delta$-occupied blocks with probability $1 - o(1)$.
To apply Proposition \ref{prop:propagation_error_vertex}, we need all blocks to have at most $\Delta \log n$ vertices for some constant $\Delta$. Fortunately, this is true with high probability. Using a straightforward modification of Lemma 4.13 in \cite{Gaudio+2024}, we obtain the following result.
\begin{lemma}
\label{lem:maximum_vertices_per_block}
    For the blocks obtained in Line \ref{line:create_blocks} in Algorithm \ref{alg:exact_recovery}, there exists an explicitly computable constant $\Delta > 0$ such that 
    \begin{equation*}
        \P\bigg( \bigcap_{i = 1}^{n / (r^d \chi \log n)} \Big\{ |V_i| \leq \Delta \log n \Big\} \bigg) = 1 - o(1).
    \end{equation*}
\end{lemma}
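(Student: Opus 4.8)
The plan is to exploit the Poisson structure of the vertex locations. Since vertices are placed according to a Poisson point process of intensity $\lambda$, the number of vertices $|V_i|$ falling in any single block $B_i$ of volume $r^d \chi \log n$ is a Poisson random variable with mean $\mu := \lambda r^d \chi \log n$, and by the independence of a Poisson process on disjoint regions these counts are mutually independent across the $n/(r^d \chi \log n)$ blocks. It therefore suffices to control the upper tail of a single $\text{Pois}(\mu)$ variable and then apply a union bound.

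First I would apply the upper-tail Chernoff bound for Poisson variables (Lemma \ref{lem:poisson_Chernoff}). Writing $t = (\Delta - \lambda r^d \chi)\log n$ so that $\Delta \log n = \mu + t$, the bound gives
\begin{equation*}
\P(|V_i| \geq \Delta \log n) \leq \exp\left(-\frac{t^2}{2(\mu+t)}\right) = \exp\left(-\frac{(\Delta - \lambda r^d \chi)^2}{2\Delta}\log n\right) = n^{-c(\Delta)},
\end{equation*}
where $c(\Delta) := (\Delta - \lambda r^d \chi)^2/(2\Delta)$, valid for any $\Delta > \lambda r^d \chi$ (which is exactly the condition $t > 0$).

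Next, I would observe that $c(\Delta) \to \infty$ as $\Delta \to \infty$, since $c(\Delta) \sim \Delta/2$, so one can fix a constant $\Delta$ large enough that $c(\Delta) > 1$, say $c(\Delta) \geq 2$. A union bound over all blocks then yields
\begin{equation*}
\P\left(\bigcup_{i=1}^{n/(r^d \chi \log n)} \big\{|V_i| \geq \Delta \log n\big\}\right) \leq \frac{n}{r^d \chi \log n} \cdot n^{-c(\Delta)} = o(1),
\end{equation*}
since $c(\Delta) > 1$ forces the product to vanish. Taking complements gives the stated bound, and $\Delta$ is explicitly computable in terms of $\lambda, r, \chi,$ and $d$.

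I expect no substantive obstacle here: the only point requiring care is verifying that the Chernoff exponent $c(\Delta)$ can be pushed above $1$ so as to beat the polynomial block count $n/(r^d \chi \log n)$ in the union bound, which is immediate from the linear growth of $c(\Delta)$ in $\Delta$. This is precisely the straightforward modification of Lemma 4.13 in \cite{Gaudio+2024} referenced above, the only adaptation being the insertion of the factor $r^d$ into the block volume and hence into the Poisson mean.
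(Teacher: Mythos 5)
Your argument is correct and is exactly the intended one: the paper omits the proof and defers to Lemma 4.13 of \cite{Gaudio+2024}, whose argument is precisely this Poisson upper-tail Chernoff bound (Lemma \ref{lem:poisson_Chernoff}) with the block-volume mean $\mu = \lambda r^d \chi \log n$, followed by a union bound over the $n/(r^d \chi \log n)$ blocks. Your computation of the exponent $c(\Delta) = (\Delta - \lambda r^d \chi)^2/(2\Delta)$ and the observation that it grows linearly in $\Delta$, so it can be pushed above $1$, correctly supplies the ``straightforward modification'' (inserting the factor $r^d$) that the paper alludes to.
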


Now, we prove that the Phase I labeling $\hat{\sigma}$ makes at most $M$ mistakes in all $\delta$-occupied blocks with high probability.

\begin{theorem}
\label{thm:propagation_error_total}
Let $G \sim \text{GSBM}(\lambda, n, r, f_\text{in}, f_\text{out}, d)$ such that $\lambda r > 1$ if $d = 1$ and $\lambda \nu_d r^d > 1$ if $d \geq 2$. Fix $\eta > 0$, and define $K := \nu_d(1 + \sqrt{d}\chi^{1/d})/\chi$. Suppose that $\chi$, $\delta$ satisfy \eqref{eq:chi_delta_conditions_d=1} if $d = 1$ or \eqref{eq:chi_delta_conditions_d>=2} if $d \geq 2$, and also that $\delta < \eta / K$. Let $\hat{\sigma}$ be the labeling obtained from Phase I of Algorithm \ref{alg:exact_recovery}. Then, there exists a constant $M$ such that
\begin{equation}
\label{eq:errors_within_block}
    \P\bigg( \bigcap_{i \in V^\dagger} \Big\{ |v \in V_i: \hat{\sigma}(v) \neq \sigma^*(u_0) \sigma^*(v) | \leq M \Big\} \bigg) = 1 - o(1)
\end{equation}
which means that $\hat{\sigma}$ makes at most $M$ mistakes on all $\delta$-occupied blocks with probability $1 - o(1)$. Consequently, $\hat{\sigma}$ achieves almost-exact recovery. Furthermore,
\begin{equation}
\label{eq:errors_within_neighborhood}
    \P\bigg( \bigcap_{v \in V} \{|u \in \calN(v): \hat{\sigma}(u) \neq \sigma^*(u_0)\sigma^*(u)| \leq \eta \log n\} \bigg) = 1 - o(1).
\end{equation}
meaning that $\hat{\sigma}$ makes at most $\eta \log n$ mistakes in the neighborhood of each vertex with probability $1 - o(1)$.
\end{theorem}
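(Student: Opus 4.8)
The plan is to isolate a collection of high-probability deterministic ``scaffolding'' events and then run an induction along the breadth-first spanning tree of $H$. Concretely, I would let $\mathcal{G}$ denote the intersection of the events that (i) $H$ is connected (Proposition~\ref{prop:connected_prop}); (ii) every block has at most $\Delta \log n$ vertices (Lemma~\ref{lem:maximum_vertices_per_block}); (iii) every $\delta$-occupied block is $(\delta/(2(d+1)), \epsilon)$-distinguishing for the $\epsilon$ supplied by Lemma~\ref{lem:distinguishing_block_lemma}; and (iv) \texttt{Pairwise Classify} labels the root block $V_{i_1}$ correctly (Proposition~\ref{prop:initial_block_labeling}). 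Each event holds with probability $1 - o(1)$, so $\P(\mathcal{G}) = 1 - o(1)$ by a union bound. The purpose of (iii) is that each child $B_{i_j}$ is visible to its parent $B_{p(i_j)}$, whence $V_{i_j} \subseteq U_{p(i_j)}$, so the distinguishing property of the (occupied) parent furnishes exactly the hypothesis $n_2 \geq \delta \log n/(2(d+1))$ needed to invoke Proposition~\ref{prop:propagation_error_vertex}.

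The main obstacle is the dependence between blocks: the labeling of $B_{i_j}$ is constructed from that of its parent, so the per-block error events are not independent and one cannot naively union-bound over the events $\{K_{i_j} > M\}$, where $K_i$ denotes the number of mistakes in $V_i$. I would resolve this by defining, for $j \geq 2$, the failure event $F_j := \{K_{i_j} > M\} \cap \{K_{p(i_j)} \leq M\} \cap \{B_{p(i_j)} \text{ distinguishing}\} \cap \{|V_{i_j}| \leq \Delta \log n\}$, which folds the size and distinguishing conditions (automatic on $\mathcal{G}$) into the event. Summing over all parent configurations $C_{p(i_j)}$ that carry at most $M$ mistakes and applying the conditional bound of Proposition~\ref{prop:propagation_error_vertex} term by term yields $\P(F_j) \leq \eta_3 n^{-129/128}$, uniformly and independently of the tree history. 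Since there are at most $n/(r^d \chi \log n) = O(n/\log n)$ blocks, a union bound gives $\P\big(\bigcup_{j} F_j\big) \leq O(n/\log n)\cdot \eta_3 n^{-129/128} = o(1)$.

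I would then close the induction deterministically on the event $\mathcal{G} \cap \big(\bigcap_j F_j^c\big)$, which has probability $1 - o(1)$. The root $V_{i_1}$ has zero mistakes by (iv); traversing the tree in breadth-first order, whenever a parent carries at most $M$ mistakes the absence of $F_j$ (together with the size and distinguishing guarantees from $\mathcal{G}$) forces the child to carry at most $M$ mistakes. By induction every $\delta$-occupied block has at most $M$ mistakes, which is \eqref{eq:errors_within_block}. Almost-exact recovery follows because the total number of mislabeled vertices is at most $M \cdot O(n/\log n) = o(n)$ from occupied blocks, plus the vertices lying in $\delta$-unoccupied blocks; a Poisson lower-tail bound (Lemma~\ref{lem:poisson_Chernoff}) shows that each block is unoccupied with probability $n^{-c}$ for some $c > 0$, so these account for only $o(n)$ vertices with high probability.

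Finally, \eqref{eq:errors_within_neighborhood} I would derive deterministically on the same event by a geometric count. Any block meeting the neighborhood $\calN(v)$, a ball of radius $r(\log n)^{1/d}$, is contained in the enlarged ball of radius $r(1 + \sqrt{d}\chi^{1/d})(\log n)^{1/d}$; dividing its volume by the block volume $r^d \chi \log n$ shows that $\calN(v)$ meets at most $K = \nu_d(1 + \sqrt{d}\chi^{1/d})/\chi$ blocks. Among these, each occupied block contributes at most $M$ mislabeled vertices and each unoccupied block contributes fewer than $\delta \log n$ (all labeled $0$), so the number of mistakes in $\calN(v)$ is at most $KM + K\delta \log n = (K\delta + o(1))\log n$. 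Since the hypothesis $\delta < \eta/K$ gives $K\delta < \eta$ strictly, the constant term $KM = o(\log n)$ is absorbed and this is at most $\eta \log n$ for large $n$; as the bound is uniform over $v \in V$, \eqref{eq:errors_within_neighborhood} follows.
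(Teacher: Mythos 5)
Your proposal is correct and follows essentially the same route as the paper: the same scaffolding events (connectivity, block-size bound via Lemma \ref{lem:maximum_vertices_per_block}, the distinguishing property from Lemma \ref{lem:distinguishing_block_lemma}, and root correctness from Proposition \ref{prop:initial_block_labeling}), the same per-block bound from Proposition \ref{prop:propagation_error_vertex} applied conditionally on parent configurations with at most $M$ mistakes, and the same geometric count with $K$ for \eqref{eq:errors_within_neighborhood}. Your packaging of the dependence issue---failure events $F_j$ plus a union bound and a deterministic induction along the tree---is an equivalent rephrasing of the paper's chain-rule product $\P(\calA_{i_1}\mid\cdot)\prod_j \P(\calA_{i_j}\mid \calA_{i_1},\ldots,\calA_{i_{j-1}},\calI\cap\calJ\cap\calH)$, and your Poisson-tail accounting for unoccupied blocks merely makes explicit what the paper compresses into ``since $\delta$ can be arbitrarily small.''
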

\begin{proof}
Let $\epsilon$ be such that Lemma \ref{lem:distinguishing_block_lemma} is satisfied and $\Delta > 0$ be such that Lemma \ref{lem:maximum_vertices_per_block} is satisfied. We define the events 
\begin{equation*}
\begin{aligned}
    &\calI = \text{\{All blocks have at most $\Delta \log n$ vertices\},} \\
    &\calJ = \text{\{Every $\delta$-occupied block is $(\delta \log n /(2(d+1)), \epsilon)$-distinguishing\}, and} \\
    &\calH = \text{\{The $(r^d \chi \log n, \delta \log n)$-visibility graph of $G$ is connected\}}. \\
\end{aligned}
\end{equation*}
Furthermore, let $\calA_i$ be the event that $\hat{\sigma}$ makes at most $M := 65(d+1)/(\epsilon^2 \delta)$ mistakes in block $B_i$, meaning that
\begin{equation*}
    \calA_i = \Big\{ |v \in V_i: \hat{\sigma}(v) \neq \sigma^*(u_0) \sigma^*(v) | \leq M \Big\}. 
\end{equation*}
Recalling that $i_1, i_2, \ldots$ is the visitation order during propagation, observe that
\begin{equation}
\label{eq:Ai_all_blocks}
\begin{aligned}
    \P\bigg( \bigcap_{i \in V^\dagger} \calA_i \bigg) &\geq \P\bigg( \bigcap_{i \in V^\dagger} \calA_i \mid \calI \cap \calJ \cap \calH \bigg) \P(\calI \cap \calJ \cap \calH) \\
    &= \P(\calA_{i_1} \mid \calI \cap \calJ \cap \calH) \bigg( \prod_{j=2}^{|V^\dagger|} \P(\calA_{i_j} \mid \calA_{i_1}, \ldots, \calA_{i_{j-1}}, \calI \cap \calJ \cap \calH) \bigg) \P(\calI \cap \calJ \cap \calH).
\end{aligned}
\end{equation}
For the first factor, we can use Proposition \ref{prop:initial_block_labeling} to obtain
\begin{equation}
\label{eq:Ai_first_block}
    \P(\calA_{i_1} \mid \calI \cap \calJ \cap \calH) \geq 1 - \eta_1n^{-c_1} \Delta \log n.
\end{equation}
for constants $\eta_1, c_1 > 0$. For the second factor, we condition on the configuration $C_{p(i_j)}$ of the vertices $V_{p(i_j)}$ and use the law of total expectation. Define the event $\calX_j := \{\calA_{i_1} \cap \ldots \cap \calA_{i_{j-1}} \cap \calI \cap \calJ \cap \calH\}$. Then, we obtain that
\begin{equation*}
\begin{aligned}
    \P(\calA_{i_j} \mid \calX_j) &= \E[\1(\calA_{i_j}) \mid \calX_j] \\
    &= \E[ \E[\1(\calA_{i_j}) \mid \calX_j, C_{p(i_j)} ] \mid \calX_j ] \\
    &= \E[ \P(\calA_{i_j} \mid \calX_j, C_{p(i_j)}) \mid \calX_j]
\end{aligned}
\end{equation*}
Now, observe that conditioned on the configuration $C_{p(i_j)}$, the event $\calA_{i_j}$ is independent of $\calX_j$ because the vertices in $V_{i_j}$ are labeled with respect to the vertices in $V_{p(i_j)}$. Therefore, we have that
\begin{equation*}
    \P(\calA_{i_j} \mid \calX_j) = \E[ \P(\calA_{i_j} \mid C_{p(i_j)}) \mid \calX_j ]
\end{equation*}
Then, observe that conditioned on $\calX_j$, the configuration $C_{p(i_j)}$ satisfies the following properties: $\hat{\sigma}$ makes at most $M$ mistakes on $V_{p(i_j)}$ due to $\calA_{p(i_j)}$, there are at least $\delta \log n / (2(d+1))$ vertices in $V_{p(i_j)}$ which $\epsilon$-distinguish any given vertex  in $V_{i_j}$ due to $\calJ$, and there are at most $\Delta \log n$ vertices in $V_{i_j}$ due to $\calI$. Thus, we can apply Proposition \ref{prop:propagation_error_vertex} to obtain that for any configuration $C_{p(i_j)}$ satisfying $\calX_j$, we have
\begin{equation*}
    \P(\calA_{i_j} \mid C_{p(i_j)}) \geq 1 - \eta_3 n^{-129/128}
\end{equation*}
for a constant $\eta_3 > 0$. Therefore, we obtain that
\begin{equation}
\label{eq:Ai_remaining_blocks}
    \P(\calA_{i_j} \mid \calX_j) \geq 1 - \eta_3n^{-129/128}.
\end{equation}

For the third factor, we combine Proposition \ref{prop:connected_prop}, Lemma \ref{lem:distinguishing_block_lemma}, and Lemma \ref{lem:maximum_vertices_per_block} to obtain
\begin{equation}
\label{eq:IJK_prob}
    \P(\calI \cap \calJ \cap \calH) = 1 - o(1).
\end{equation}
Therefore, we can substitute \eqref{eq:Ai_first_block}, \eqref{eq:Ai_remaining_blocks}, and \eqref{eq:IJK_prob} into \eqref{eq:Ai_all_blocks}, and use Bernoulli's inequality to obtain
\begin{equation*}
\begin{aligned}
    \P\bigg( \bigcap_{i \in V^\dagger} \calA_i \bigg) &\geq (1 - \eta_1 n^{-c_1} \Delta \log n)\left(1 - \eta_3 n^{-129/128}\right)^{n / (r^d \chi \log n)} (1 - o(1))\\
    &\geq (1 - \eta_1 n^{-c_1} \Delta \log n)\left(1 - \frac{\eta_3 n^{-1/128}}{r^d \chi \log n}\right) (1 - o(1)) \\
    &= 1 - o(1),
\end{aligned}
\end{equation*}
which shows (\ref{eq:errors_within_block}). Since $\delta$ can be arbitrarily small, we obtain that $\hat{\sigma}$ achieves almost-exact recovery.

Now, we show \eqref{eq:errors_within_neighborhood}. From (\ref{eq:errors_within_block}), we use the fact that $\delta \log n > M$ for sufficiently large $n$ to obtain 
\begin{equation*}
    \P\bigg( \bigcap_{i = 1}^{n/(r^d \chi \log n)} \Big\{ |v \in V_i: \hat{\sigma}(v) \neq \sigma^*(u_0) \sigma^*(v) | \leq \delta \log n \Big\} \bigg) = 1 - o(1)
\end{equation*}
because any unoccupied block automatically has less than $\delta \log n$ mistakes. Then, we note that $K$ is an upper bound for the number of blocks which intersect the neighborhood $\calN(v)$ for any given vertex $v$. Therefore, we have that
\begin{equation*}
    \P\bigg( \bigcap_{v \in V} \{|u \in \calN(v): \hat{\sigma}(u) \neq \sigma^*(u_0)\sigma^*(u)| \leq \delta K \log n\} \bigg) = 1 - o(1).
\end{equation*}
Since $\delta < \eta / K$, we obtain (\ref{eq:errors_within_neighborhood}).
\end{proof}

\section{Refine}
\label{sec:refine}
We will now show that the Phase II estimator $\tilde{\sigma}$ produced by Algorithm \ref{alg:exact_recovery} achieves exact recovery. In particular, we show the probability that $\tau(v, \hat{\sigma})$ as defined in \eqref{eq:def_tau} makes a mistake on any given vertex $v$ is $o(1/n)$, which implies that $\tilde{\sigma}$ makes a mistake with probability $o(1)$. To accomplish this, we first upper-bound for arbitrary fixed $\rho, \eta > 0$ the probability that $\tau(v, \sigma^*)$ comes within $\rho \eta \log n$ of making an error is $n^{-(I(f_\text{in}, f_\text{out}) - \rho \eta /2)}$. Then, we will show that for any $\sigma$ which differs from $\sigma^*$ by at most $\eta \log n$ vertices in the neighborhood of $v$, the difference $|\tau(v, \sigma^*(u_0)\sigma) - \tau(v, \sigma^*)| \leq \rho \eta \log n$ for an appropriate constant $\rho$. Therefore, the probability $\tau(v, \sigma^*(u_0) \sigma)$ makes an error on $v$ for any such $\sigma$ is at most $n^{-(I(f_\text{in}, f_\text{out}) - \rho \eta /2)}$ as well. Finally, the condition $I(f_\text{in}, f_\text{out}) > 1$ means that we can take $\eta$ sufficiently small such that the error probability is $o(1/n)$.

To start, we define a random variable $Z$ such that
\begin{equation*}
    (\tau(v, \sigma^*) \mid \sigma^*(v) = -1) \sim Z
\end{equation*}
and 
\begin{equation*}
    (\tau(v, \sigma^*) \mid \sigma^*(v) = +1) \sim -Z.
\end{equation*}
We will express $Z$ as a random sum of i.i.d. random variables. To this end, let $D$ be a random variable with density
\begin{equation}
\label{eq:def_D}
    f_D(x) :=
    \begin{cases}
        dx^{d-1}/r^d \text{ if } 0 \leq x \leq r \\
        0 \text{ otherwise.} 
    \end{cases}
\end{equation}
To interpret $D$, we see that if a vertex $u$ is dropped uniformly at random in a $d$-dimensional sphere of radius $r (\log n)^{1/d}$ centered at $v$, then $D$ represents the distance $\|u - v \| / (\log n)^{1/d}$. Let $U_\text{out}$ be a random variable such that $(U_\text{out} \mid D) \sim \text{Bern}(f_\text{out}(D))$ and $U_\text{in}$ be such that $(U_\text{in} \mid D) \sim \text{Bern}(f_\text{in}(D))$. Let $N \sim \text{Pois}(\lambda \nu_d r^d \log n /2)$ and let $N_+, N_- \sim N$ be independent. Then, we define
\begin{equation}
\label{eq:def_P}
    P := \log\left(\frac{f_\text{in}(D)}{f_\text{out}(D)}\right)U_\text{out} + \log\left(\frac{1 - f_\text{in}(D)}{1 - f_\text{out}(D)}\right)(1-U_\text{out})
\end{equation}
and
\begin{equation}
\label{eq:def_Q}
    Q := \log\left(\frac{f_\text{out}(D)}{f_\text{in}(D)}\right)U_\text{in} + \log\left(\frac{1 - f_\text{out}(D)}{1 - f_\text{in}(D)}\right)(1-U_\text{in}).
\end{equation}
Finally, let $\{P_i\}_{i \in \N} \sim P$ be i.i.d. and $\{Q_i\}_{i \in \N} \sim Q$ be i.i.d, and define
\begin{equation}
\label{eq:def_Z}
    Z = \sum_{i=1}^{N_+} P_i + \sum_{j=1}^{N_-} Q_j
\end{equation}
where $N_+$, $N_-$, $P_i$, and $Q_i$ are all independent. Now, we can upper-bound the probability of $\tau(v, \sigma^*)$ being within $\rho \eta \log n$ of making an error.

\begin{lemma}
\label{lem:refine_Chernoff}
    For any constants $\rho > 0 $ and $\eta > 0$, 
    \begin{equation*}
        \P(Z \geq -\rho \eta \log n) \leq n^{-( I(f_\text{in}, f_\text{out}) - \rho\eta/2 )}.
    \end{equation*}
    Consequently, 
    \begin{equation}
    \label{eq:tau-_incorrect_prob}
        \P(\tau(v, \sigma^*) \geq -\rho \eta \log n \mid \sigma^*(v)= -1) \leq n^{-( I(f_\text{in}, f_\text{out}) - \rho\eta/2 )}
    \end{equation}
    and
    \begin{equation}
    \label{eq:tau+_incorrect_prob}
        \P(\tau(v, \sigma^*) \leq \rho \eta \log n \mid \sigma^*(v)= +1) \leq n^{-( I(f_\text{in}, f_\text{out}) - \rho\eta/2 )}.
    \end{equation}
\end{lemma}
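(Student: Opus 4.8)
The plan is to bound the upper-tail probability $\P(Z \geq -\rho \eta \log n)$ via a Chernoff bound, using the moment generating function of $Z$ together with the compound-Poisson structure established in \eqref{eq:def_Z}. Since $Z$ is a sum of $N_+$ i.i.d. copies of $P$ and $N_-$ i.i.d. copies of $Q$ with independent Poisson counts, its MGF factors cleanly. The two consequences \eqref{eq:tau-_incorrect_prob} and \eqref{eq:tau+_incorrect_prob} then follow immediately: by construction $(\tau(v,\sigma^*) \mid \sigma^*(v) = -1) \sim Z$ gives the first, and $(\tau(v,\sigma^*) \mid \sigma^*(v) = +1) \sim -Z$ together with the symmetry $\P(-Z \leq \rho\eta\log n) = \P(Z \geq -\rho\eta\log n)$ gives the second, so the entire lemma reduces to the single tail bound on $Z$.

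First I would apply the generic Chernoff bound: for any $s \geq 0$,
\begin{equation*}
    \P(Z \geq -\rho\eta\log n) \leq e^{s \rho \eta \log n} \, \E[e^{sZ}].
\end{equation*}
Taking $s = 1/2$ is the natural choice, since the optimal Chernoff exponent for a log-likelihood-ratio statistic is attained at the symmetric point $t = 1/2$ in the CH-divergence \eqref{eq:CH_divergence_discrete}; this is exactly where the information metric $I(f_\text{in}, f_\text{out})$ is defined. With $s = 1/2$, the factor $e^{s\rho\eta\log n} = n^{\rho\eta/2}$ produces the $\rho\eta/2$ correction in the exponent. Then, using the compound-Poisson MGF formula, for $N_+ \sim \text{Pois}(\mu)$ with $\mu = \lambda\nu_d r^d \log n/2$,
\begin{equation*}
    \E\!\left[e^{\frac{1}{2}\sum_{i=1}^{N_+} P_i}\right] = \exp\!\big(\mu\,(\E[e^{P/2}] - 1)\big),
\end{equation*}
and analogously for the $Q$-sum, so that $\E[e^{Z/2}] = \exp\big(\mu(\E[e^{P/2}] - 1) + \mu(\E[e^{Q/2}] - 1)\big)$.

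The key computation is evaluating $\E[e^{P/2}]$ and $\E[e^{Q/2}]$ explicitly. Substituting the definitions \eqref{eq:def_P} and \eqref{eq:def_Q}, conditioning on $D$, and taking the Bernoulli expectation over $U_\text{out}$ (resp.\ $U_\text{in}$), a direct calculation gives $e^{P/2} = \sqrt{f_\text{in}(D)/f_\text{out}(D)}$ when $U_\text{out} = 1$ and $\sqrt{(1-f_\text{in}(D))/(1-f_\text{out}(D))}$ when $U_\text{out} = 0$; averaging over $U_\text{out} \sim \text{Bern}(f_\text{out}(D))$ yields $\E[e^{P/2} \mid D] = \sqrt{f_\text{in}(D) f_\text{out}(D)} + \sqrt{(1-f_\text{in}(D))(1-f_\text{out}(D))}$, and the same expression arises for $\E[e^{Q/2} \mid D]$ by the in/out symmetry. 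Integrating over $D$ against its density \eqref{eq:def_D}, both terms coincide, so
\begin{equation*}
    \mu\big(\E[e^{P/2}] - 1\big) + \mu\big(\E[e^{Q/2}] - 1\big) = -\lambda\nu_d r^d \log n \int_0^r \Big(1 - \sqrt{f_\text{in}(t)f_\text{out}(t)} - \sqrt{(1-f_\text{in}(t))(1-f_\text{out}(t))}\Big)\frac{dt^{d-1}}{r^d}\,dt,
\end{equation*}
which is precisely $-I(f_\text{in}, f_\text{out})\log n$ by \eqref{eq:information_metric}. Combining the pieces yields $\P(Z \geq -\rho\eta\log n) \leq n^{\rho\eta/2} \cdot n^{-I(f_\text{in}, f_\text{out})} = n^{-(I(f_\text{in}, f_\text{out}) - \rho\eta/2)}$.

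**The main obstacle** is verifying that the correspondence $(\tau(v,\sigma^*) \mid \sigma^*(v) = -1) \sim Z$ is exact, including that the Poisson rate $\lambda\nu_d r^d \log n/2$ correctly counts same-community and opposite-community visible neighbors. The number of vertices visible to $v$ is $\text{Pois}(\lambda \cdot \text{Vol}(\calN(v)))$ with $\text{Vol}(\calN(v)) = \nu_d (r(\log n)^{1/d})^d = \nu_d r^d \log n$, and each is independently assigned to $v$'s community or the opposite with probability $1/2$; Poisson thinning then splits this into two independent $\text{Pois}(\lambda\nu_d r^d\log n/2)$ streams, matching $N_+$ and $N_-$. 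I would also need to confirm that the per-neighbor contribution to $\tau(v,\sigma^*)$ in \eqref{eq:def_tau}, when $\sigma^*(v) = -1$, distributes as $P$ for opposite-community neighbors and $Q$ for same-community neighbors (the sign flip relative to the $+1$ case accounting for the $-Z$ representation), and that the conditional edge indicator given distance $D$ is genuinely $\text{Bern}(f_\text{out}(D))$ or $\text{Bern}(f_\text{in}(D))$ as the definitions of $U_\text{out}, U_\text{in}$ require. Once this distributional identity is pinned down, the remaining Chernoff computation is routine.
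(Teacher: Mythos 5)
Your proposal is correct and follows essentially the same route as the paper's proof: a Chernoff bound at $s=1/2$, the compound-Poisson MGF factorization $\E[e^{Z/2}] = \exp\bigl(\mu(\E[e^{P/2}]-1) + \mu(\E[e^{Q/2}]-1)\bigr)$, the conditional Bernoulli computation yielding $\sqrt{f_\text{in}(D)f_\text{out}(D)} + \sqrt{(1-f_\text{in}(D))(1-f_\text{out}(D))}$, and the identification of the exponent with $-I(f_\text{in}, f_\text{out})\log n$. Your closing verification of the distributional identity via Poisson thinning is a detail the paper builds into the definition of $Z$ rather than re-deriving, but it is consistent with the model.
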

\begin{proof}
We start by deriving a Chernoff bound for $Z$, which yields
\begin{equation}
\label{eq:Chernoff_Z}
    \P(Z \geq -\rho \eta \log n) = \P(e^{tZ} \geq n^{- t \rho \eta}) \leq n^{t \rho \eta} \E[e^{tZ}]
\end{equation}
for all $t > 0$. Now, we compute the moment-generating function (MGF)  of $Z$. Let $M_N$, $M_P$, and $M_Q$ be the MGFs of $N$, $P$, and $Q$ respectively. Observe that
\begin{equation*}
\begin{aligned}
    \E[e^{tZ}] &= \E\bigg[\exp\bigg(t \bigg(\sum_{i=1}^{N_+} P_i + \sum_{j=1}^{N-} Q_j \bigg)\bigg)\bigg] \\
    &= \E\bigg[\exp\bigg(t\sum_{i=1}^{N_+} P_i\bigg) \exp\bigg(t\sum_{j=1}^{N-} Q_j\bigg)\bigg] \\
    &= \E\bigg[\exp\bigg(t\sum_{i=1}^{N+} P_i\bigg)\bigg] \cdot \E\bigg[\exp\bigg(t\sum_{j=1}^{N-} Q_j\bigg)\bigg] \\
    &= M_N(\log M_P(t)) \cdot M_N(\log M_Q(t))
\end{aligned}
\end{equation*}
where the second-to-last line follows from the independence of $N_+$, $N_-$, $P_i$, and $Q_i$, and the last line follows from the definition and properties of the MGF. Since $N \sim \text{Pois}(\lambda \nu_d r^d \log n / 2)$, we have an explicit expression for its MGF, which is $M_N(t) = \exp( \lambda \nu_d r^d \log n (e^t-1) / 2 )$. Therefore, we obtain that
\begin{equation}
\label{eq:MGF_Z}
\begin{aligned}
    \E[e^{tZ}] &= \exp\bigg( \frac{\lambda \nu_d r^d \log n}{2} (M_P(t)-1) \bigg) \cdot \exp\bigg( \frac{\lambda \nu_d r^d \log n}{2} (M_Q(t)-1) \bigg) \\
    &= \exp\bigg( \frac{\lambda \nu_d r^d \log n}{2}(M_P(t) + M_Q(t) - 2) \bigg).
\end{aligned}
\end{equation}

\noindent Now, we need to find the MGF's of $P$ and $Q$. We proceed by direct calculation. Observe that
\begin{equation*}
\begin{aligned}
    M_P(t) &= \E[e^{tP}] \\
    &= \E\bigg[\exp\bigg(t\log\bigg(\frac{f_\text{in}(D)}{f_\text{out}(D)}\bigg) U_\text{out} + t\log\bigg(\frac{1-f_\text{in}(D)}{1-f_\text{out}(D)}\bigg)(1-U_\text{out}) \bigg)\bigg] \\
    &= \E\bigg[ \bigg(\frac{f_\text{in}(D)}{f_\text{out}(D)}\bigg)^{tU_\text{out}} \bigg(\frac{1-f_\text{in}(D)}{1-f_\text{out}(D)}\bigg)^{t(1-U_\text{out})} \bigg] \\
    &= \E\bigg[ \E\bigg[ (\frac{f_\text{in}(D)}{f_\text{out}(D)})^{tU_\text{out}} (\frac{1-f_\text{in}(D)}{1-f_\text{out}(D)})^{t(1-U_\text{out})} \: \bigg| \: D \bigg] \bigg] \text{ by the law of total expectation} \\
    &= \int_0^r \E\bigg[ \bigg(\frac{f_\text{in}(D)}{f_\text{out}(D)}\bigg)^{tU_\text{out}} \bigg(\frac{1-f_\text{in}(D)}{1-f_\text{out}(D)}\bigg)^{t(1-U_\text{out})} \: \bigg| \: D = x \bigg] \frac{dx^{d-1}}{r^d} dx.
\end{aligned}
\end{equation*}
To calculate the conditional expectation, we use the fact that $(U_\text{out} \mid D = x) \sim \text{Bern}(f_\text{out}(x))$ to obtain
\begin{equation*}
\begin{aligned}
    \E\bigg[ \bigg(\frac{f_\text{in}(D)}{f_\text{out}(D)}\bigg)^{tU_\text{out}} \bigg(\frac{1-f_\text{in}(D)}{1-f_\text{out}(D)}\bigg)^{t(1-U_\text{out})} \: \bigg| \: D = x \bigg] &= \bigg(\frac{f_\text{in}(x)}{f_\text{out}(x)}\bigg)^t f_\text{out}(x) + \bigg(\frac{1-f_\text{in}(x)}{1-f_\text{out}(x)}\bigg)^t (1 - f_\text{out(x)}) \\
    &= f_\text{in}(x)^t f_\text{out}(x)^{1-t} + (1-f_\text{in}(x))^t(1-f_\text{out}(x))^{1-t}.
\end{aligned}
\end{equation*}
Therefore, the MGF of $P$ is
\begin{equation}
\label{eq:MGF_P}
    M_P(t) = \int_0^r \left( f_\text{in}(x)^t f_\text{out}(x)^{1-t} + (1-f_\text{in}(x))^t(1-f_\text{out}(x))^{1-t} \right) \frac{dx^{d-1}}{r^d} dx.
\end{equation}
By a similar computation, the MGF of $Q$ is
\begin{equation}
\label{eq:MGF_Q}
    M_Q(t) = \int_0^r \left( f_\text{out}(x)^t f_\text{in}(x)^{1-t} + (1-f_\text{out}(x))^t(1-f_\text{in}(x))^{1-t} \right) \frac{dx^{d-1}}{r^d} dx.
\end{equation}
We note that when $t = 1/2$,
\begin{equation}
\label{eq:MGF_P/2,Q/2}
    M_P\left(\frac{1}{2}\right) = M_Q\left(\frac{1}{2}\right) = \int_0^r \bigg( \sqrt{f_\text{in}(x) f_\text{out}(x)} + \sqrt{(1-f_\text{in}(x))(1-f_\text{out}(x))} \bigg) \frac{dx^{d-1}}{r^d} dx.
\end{equation}
Then, we substitute \eqref{eq:MGF_P/2,Q/2} into \eqref{eq:MGF_Z} to obtain
\begin{equation}
\label{eq:MGF_Z/2}
\begin{aligned}
    \E[e^{Z/2}] &= \exp\bigg( \lambda \nu_d r^d \log n \int_0^r \left( \sqrt{f_\text{in}(x) f_\text{out}(x)} + \sqrt{(1-f_\text{in}(x))(1-f_\text{out}(x))} -1 \right) \frac{dx^{d-1}}{r^d} dx \bigg) \\
    &= \exp(-I(f_\text{in}, f_\text{out}) \log n)\\
    &= n^{-I(f_\text{in}, f_\text{out})}.
\end{aligned}
\end{equation}
Finally, we can substitute (\ref{eq:MGF_Z/2}) into the Chernoff bound (\ref{eq:Chernoff_Z}), which yields
\begin{equation*}
    \P(Z \geq -\rho \eta \log n) \leq n^{\rho \eta / 2} \E[e^{Z/2}] = n^{-(I(f_\text{in}, f_\text{out}) - \rho \eta /2)}
\end{equation*}
as desired. Note that we obtain \eqref{eq:tau-_incorrect_prob} since $(\tau(v, \sigma^*) \mid \sigma^*(v) = -1) \sim Z$ and \eqref{eq:tau+_incorrect_prob} since $(\tau(v, \sigma^*) \mid \sigma^*(v) = +1) \sim -Z$.
\end{proof}

Using Lemma \ref{lem:refine_Chernoff}, we can establish that $\tilde{\sigma}$ achieves exact recovery with probability $1 - o(1)$. We adapt the proof of Theorem 2.2 in \cite{Gaudio+2024} to our setting.

\begin{proof}[Proof of Theorem \ref{thm:achievability}]
Fix $\eta > 0$ and let the event $\mathcal{E}_1$ be the event that $\hat{\sigma}$ makes at most $\eta \log n$ mistakes in the neighborhood of every vertex $v \in V$. That is,
\begin{equation*}
    \mathcal{E}_1 = \bigcap_{v \in V}\{u \in \calN(v):  |\hat{\sigma}(u) \neq \sigma^*(u) \sigma^*(u_0)| < \eta \log n\}.
\end{equation*}
By Theorem \ref{thm:propagation_error_total}, $\P(\mathcal{E}_1) = 1 - o(1)$. 

We will provide an upper bound for the probability that taking the sign of $\tau(v, \sigma^*(u_0) \sigma)$ results in an incorrect community label for a vertex $v$, which is uniform over all labelings $\sigma$ that are ``close'' to the true labels $\sigma^*$. Let $W(v; \eta)$ be the set of labelings $\sigma$ that differ from the true labels $\sigma^*$ by at most $\eta \log n$ vertices in $\calN(v)$ (up to a global sign flip). That is,
\begin{equation*}
    W(v; \eta) = \{\sigma : |\{u \in \calN(v): \sigma(u) \neq \sigma^*(u_0) \sigma^*(u)\}| \leq \eta \log n\}.
\end{equation*}
Note that $\hat{\sigma} \in W(v; \eta)$ with probability $1 - o(1)$ due to Theorem \ref{thm:propagation_error_total}. Now, define the event
\begin{equation*}
\begin{aligned}
    \calE_v &= \bigg( \{\sigma^*(v)=+1\} \bigcap \bigg( \bigcup_{\sigma \in W(v; \eta)} \{ \tau(v, \sigma^*(u_0) \sigma) \leq 0 \} \bigg) \bigg) \\
    &\qquad \bigcup \bigg( \{\sigma^*(v)=-1\} \bigcap \bigg( \bigcup_{\sigma \in W(v; \eta)} \{ \tau(v, \sigma^*(u_0) \sigma) \geq 0 \} \bigg) \bigg).
\end{aligned}
\end{equation*}
To interpret $\mathcal{E}_v$, suppose that we do not know $\sigma(v)$ but we estimate it given the remaining data using the maximum likelihood estimator. Then, $\mathcal{E}_v$ is the event that there exists an ``almost-correct'' labeling $\sigma$ (up to a global sign flip) for which the likelihood ratio $\tau(v, \sigma^*(u_0) \sigma)$ produces the incorrect label for $v$.

Now, let $\calE_2$ be the event that all vertices are labeled correctly relative to $u_0$, meaning that
\begin{equation*}
    \calE_2 = \bigcap_{v \in V} \{ \tilde{\sigma}(v) = \sigma^*(u_0) \sigma^*(v) \}.
\end{equation*}
Following similar steps to the proof of \cite[Theorem 2.2]{Gaudio+2024}, we have that
\begin{equation}
\label{eq:prob_E2_complement}
    \mathbb{P}(\calE_2^c) \leq \sum_{i=1}^{cn} \mathbb{P}(\calE_{v_i}) + o(1),
\end{equation}
where $c$ is large enough so that $\mathbb{P}(|V| \leq cn) = 1-o(1)$. Here $v_1, v_2, \ldots$ is an arbitrary enumeration of the vertices, beginning with the first $N \sim \text{Poisson}(\lambda n)$ vertices and adding ``dummy'' vertices as needed. These dummy vertices are not included when estimating the labels of the true vertices. Thus, it remains to show $\mathbb{P}(\calE_{v_i}) = o(1/n)$.

We first show that if $\sigma \in W(v; \eta)$, then $| \tau(v, \sigma^*(u_0)\sigma) - \tau(v, \sigma^*) | \leq \rho \eta \log n$ for some $\rho, \eta > 0$. Let us define the sets
\begin{equation*}
\begin{aligned}
    &R_-(v) := \{u \in V: \sigma^*(u_0)\sigma(u) = +1, \sigma^*(u) = -1, u \visible v \}, \\
    &R_+(v) := \{u \in V : \sigma^*(u_0) \sigma(u) = -1, \sigma^*(u) = +1, u \visible v\}, \text{ and} \\
    &R(v) := R_+(v) \cup R_-(v) = \{u \in V: \sigma^*(u_0)\sigma(u) \neq \sigma^*(u), u \visible v \}.
\end{aligned}
\end{equation*}
By the definition of $\tau(v, \sigma^*(u_0)\sigma)$ and $\tau(v, \sigma^*)$, we obtain that
\begin{equation*}
\begin{aligned}
    &\tau(v, \sigma^*(u_0)\sigma) - \tau(v, \sigma^*) \\
    &\qquad= 2 \sum_{u \in R_-(v)}  \left( \log\bigg(\frac{\overline{f}_\text{in}(\|u-v\|)}{\overline{f}_\text{out}(\|u-v\|)}\bigg) \1(u \sim v) + \log\bigg(\frac{1 - \overline{f}_\text{in}(\|u-v\|)}{1 - \overline{f}_\text{out}(\|u-v\|)}\bigg)\1(u \nsim v) \right) \\
    &\qquad \qquad- 2 \sum_{u \in R_+(v)} \left( \log\bigg(\frac{\overline{f}_\text{in}(\|u-v\|)}{\overline{f}_\text{out}(\|u-v\|)}\bigg) \1(u \sim v) + \log\bigg(\frac{1 - \overline{f}_\text{in}(\|u-v\|)}{1 - \overline{f}_\text{out}(\|u-v\|)}\bigg)\1(u \nsim v) \right).
\end{aligned}
\end{equation*}
Taking the absolute value and using the triangle inequality yields
\begin{equation*}
\begin{aligned}
    &|\tau(v, \sigma^*(u_0)\sigma) - \tau(v, \sigma^*)| \\
    &\qquad \leq 2 \sum_{u \in R_-(v)}  \bigg|  \log\bigg(\frac{\overline{f}_\text{in}(\|u-v\|)}{\overline{f}_\text{out}(\|u - v\|)}\bigg) \1(u \sim v) + \log\bigg(\frac{1 - \overline{f}_\text{in}(\|u-v\|)}{1 - \overline{f}_\text{out}(\|u-v\|)}\bigg)\1(u \nsim v) \bigg| \\
    &\qquad \qquad+ 2 \sum_{u \in R_+(v)} \bigg| \log\bigg(\frac{\overline{f}_\text{in}(\|u-v\|)}{\overline{f}_\text{out}(\|u - v\|)}\bigg) \1(u \sim v) + \log\bigg(\frac{1 - \overline{f}_\text{in}(\|u-v\|)}{1 - \overline{f}_\text{out}(\|u-v\|)}\bigg)\1(u \nsim v) \bigg| \\
    &\qquad= 2 \sum_{u \in R(v)} \bigg| \log\bigg(\frac{\overline{f}_\text{in}(\|u-v\|)}{\overline{f}_\text{out}(\|u - v\|)}\bigg) \1(u \sim v) + \log\bigg(\frac{1 - \overline{f}_\text{in}(\|u-v\|)}{1 - \overline{f}_\text{out}(\|u-v\|)}\bigg)\1(u \nsim v) \bigg| \\
    &\qquad\leq 2 \sum_{u \in R(v)} \bigg| \log\bigg(\frac{\overline{f}_\text{in}(\|u-v\|)}{\overline{f}_\text{out}(\|u - v\|)}\bigg) \1(u \sim v) \bigg| + \bigg| \log\bigg(\frac{1 - \overline{f}_\text{in}(\|u-v\|)}{1 - \overline{f}_\text{out}(\|u-v\|)}\bigg)\1(u \nsim v) \bigg| \\
    &\qquad \leq 2 \sum_{u \in R(v)} \bigg| \log\bigg(\frac{\overline{f}_\text{in}(\|u-v\|)}{\overline{f}_\text{out}(\|u - v\|)}\bigg) \bigg| + \bigg| \log\bigg(\frac{1 - \overline{f}_\text{in}(\|u-v\|)}{1 - \overline{f}_\text{out}(\|u-v\|)}\bigg) \bigg|.
\end{aligned} 
\end{equation*}
Recall that we have $\xi < f_\text{in}(t), f_\text{out}(t) < 1-\xi$ for all $t \leq r$ from Assumption \ref{eq:function_assumption_2}, which implies that 
\begin{equation*}
    \bigg| \log\bigg(\frac{\overline{f}_\text{in}(\|u-v\|)}{\overline{f}_\text{out}(\|u - v\|)}\bigg) \bigg| < \log(1/\xi) \quad \text{and} \quad \bigg| \log\bigg(\frac{1 - \overline{f}_\text{in}(\|u-v\|)}{1 - \overline{f}_\text{out}(\|u-v\|)}\bigg) \bigg| < \log(1/\xi).
\end{equation*}
Therefore,
\begin{equation*}
    |\tau(v, \sigma^*(u_0)\sigma) - \tau(v, \sigma^*)| \leq 2 \sum_{u \in R(v)} 2 \log(1/\xi) \leq 4 \log(1/\xi) \eta \log n,
\end{equation*}
where last inequality holds because $\sigma \in W(v; \eta)$, so there are at most $\eta \log n$ vertices $u$ such that $\sigma^*(u_0)\sigma(u) \neq \sigma^*(u)$. Finally, we can let $\rho := 4 \log(1/\xi)$ to obtain 
\begin{equation}
\label{eq:tau_difference}
    |\tau(v, \sigma^*(u_0)\sigma) - \tau(v, \sigma^*)| \leq \rho \eta \log n.
\end{equation}

Now, we show that $\P(\mathcal{E}_v) = o(1/n)$. By the law of total probability,
\begin{equation*}
    \P(\mathcal{E}_v) = \frac{1}{2}\P(\mathcal{E}_v \mid \sigma^*(v) = +1) + \frac{1}{2}\P(\mathcal{E}_v \mid \sigma^*(v) = -1).
\end{equation*}
We consider each conditional probability separately. Observe that
\begin{equation*}
\begin{aligned}
    \P(\mathcal{E}_v \mid \sigma^*(v) = -1) &= \P\bigg(\bigcup_{\sigma \in W(v; \eta)} \{\tau(v, \sigma^*(u_0) \sigma) \geq 0\} \mid \sigma^*(v) = -1 \bigg) \\
    &= \P\big(\max_{\sigma \in W(v; \eta)} \tau(v, \sigma^*(u_0) \sigma) \geq 0 \mid \sigma^*(v) = -1 \big) \\
    &\leq \P\big(\tau(v, \sigma^*) \geq - \rho \eta \log n \mid \sigma^*(v) = -1 \big).
\end{aligned}
\end{equation*}
where the last inequality holds because $\max_{\sigma \in W(v; \eta)} \tau(v, \sigma^*(u_0) \sigma) \geq 0$ implies $\tau(v, \sigma^*) \geq -\rho \eta \log n$ due to $|\tau(v, \sigma^*(u_0) \sigma) - \tau(v, \sigma^*)| \leq \rho \eta \log n$ for all $\sigma \in W(v;\eta)$. Then, note that $(\tau(v, \sigma^*) \mid \sigma^*(v) = -1) \sim Z$ as defined in \eqref{eq:def_Z}. Therefore,
\begin{equation*}
\begin{aligned}
    &\P(\mathcal{E}_v \mid \sigma^*(v) = -1) \leq \P(\tau(v, \sigma^*) \geq -\rho \eta \log n \mid \sigma^*(v)= -1) = \P(Z \geq -\rho \eta \log n) \leq n^{-(I(f_\text{in}, f_\text{out}) - \rho \eta /2)},
\end{aligned}
\end{equation*}
where the last inequality follows from Lemma \ref{lem:refine_Chernoff}. Similarly, we can show that
\begin{equation*}
\begin{aligned}
    &\P(\mathcal{E}_v \mid \sigma^*(v) = +1) \leq \P(\tau(v, \sigma^*) \leq \rho \eta \log n \mid \sigma^*(v) = +1) = \P(-Z \leq \rho \eta \log n) \leq n^{-(I(f_\text{in}, f_\text{out}) - \rho \eta /2)}.
\end{aligned}
\end{equation*}
Therefore, we obtain
\begin{equation}
\label{eq:prob_Ev_final}
    \P(\calE_v) \leq n^{-(I(f_\text{in}, f_\text{out}) - \rho \eta /2)}.
\end{equation}
Since $I(f_\text{in}, f_\text{out}) > 1$ and \eqref{eq:prob_Ev_final} holds for any fixed $\eta > 0$, we can let $\eta = (I(f_\text{in}, f_\text{out}) - 1) / 2 > 0$, which establishes that $\P(\calE_v) = o(1/n)$. Then, we obtain that $\P(\calE_2) = 1 - o(1)$ from \eqref{eq:prob_E2_complement}, which shows that $\tilde{\sigma}$ achieves exact recovery.
\end{proof}

\section{Proof of Impossibility}
\label{sec:Impossibility}
In this section, we show that exact recovery is impossible under the conditions specified in Theorem \ref{thm:impossibility}. We first consider the case where $I(f_\text{in}, f_\text{out}) < 1$. To show that exact recovery is impossible, we will prove that the maximum likelihood estimator (MLE) fails with a constant, nonzero probability. This implies that any estimator will fail with probability bounded away from zero since the MLE is optimal\footnote{The MLE coincides with the MAP estimator, which is optimal, so the MLE is optimal as well.}.
Now, to prove that the MLE fails, it is sufficient to show that there exists a vertex $v$ such that 
\begin{equation*}
    \sign(\tau(v, \sigma^*)) \neq \sigma^*(v)
\end{equation*}
where $\tau(v, \sigma)$ is defined in Equation (\ref{eq:def_tau}). That is, changing the community label of $v$ either increases the likelihood of the observed graph or keeps the likelihood the same. We call such a vertex ``Flip-Bad'', a term which was introduced in \cite{Abbe+2020}.

We will start with the proof for the condition $I(f_\text{in}, f_\text{out}) < 1$, splitting the proof into two cases: $\lambda \nu_d r^d < 1$ and $\lambda \nu_d r^d \geq 1$.

Case 1: $\lambda \nu_d r^d < 1$. We define the vertex visibility graph of $G$ as the graph formed from taking the vertices of $G$ and adding an edge between any pair of vertices with distance less than $r(\log n)^{1/d}$. We will show that the vertex visibility graph is disconnected with high probability; if the vertex visibility graph is disconnected, then changing the community labels of all vertices in one connected component keeps the likelihood of the observed graph the same. Hence, there would exist a Flip-Bad vertex and the MLE would therefore fail. 

We note that the vertex visibility graph is a \textit{random geometric graph}. A random geometric graph, which we denote $RGG(d, n, x)$, is defined as follows: First, we distribute vertices within the $d$-dimensional unit cube by a Poisson point process of intensity $n$. Then, for any pair of vertices, we form an edge if and only if the distance between the vertices is less than $x$. We remark that there are multiple commonly-used definitions for a random geometric graph; the most common definition distributes a fixed number of vertices $n$ uniformly at random rather than using a Poisson point process of intensity $n$. Therefore, we can establish the disconnectivity of the vertex visibility graph using results from random geometric graphs literature. In particular, we rely on the following result from Penrose \cite{Penrose1997}.

\begin{lemma}[Result (1) from \cite{Penrose1997}]
\label{lem:RGG_connectivity_lemma}
    Consider a set of vertices distributed in the $d$-dimensional unit cube by a Poisson process of intensity $n$, and let $M_n := \inf\{x: RGG(d, n, x) \text{ is connected}\}$ be the minimum visibility radius such that the random geometric graph is connected. If $d \geq 2$, then for any fixed $\alpha \in \R$,
    \begin{equation*}
        \lim_{n \to \infty} \P\bigg(M_n \leq \Big(\frac{\alpha + \log n}{n \nu_d}\Big)^{1/d}\bigg) = \exp(e^{-\alpha}).
    \end{equation*}
\end{lemma}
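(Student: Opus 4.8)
The plan is to reprove this via the classical random-geometric-graph paradigm that, at the connectivity threshold, the sole obstruction to connectivity is the presence of isolated vertices. Abbreviate $r_n := \big((\alpha+\log n)/(n\nu_d)\big)^{1/d}$, so that a ball of radius $r_n$ has volume $\nu_d r_n^d = (\alpha + \log n)/n$. Since $\{M_n \le r_n\}$ is exactly the event that $RGG(d,n,r_n)$ is connected, it suffices to show $\P(RGG(d,n,r_n)\text{ connected}) \to \exp(-e^{-\alpha})$. I would first reduce this to controlling isolated vertices by establishing $\P(\text{connected}) = \P(\text{no isolated vertex}) + o(1)$; the Gumbel form of the answer then comes entirely from the count of isolated vertices.

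Let $W_n$ denote the number of isolated vertices at radius $r_n$. By the Mecke equation for the intensity-$n$ Poisson process, a point $u$ is isolated iff no further point lands in $B(u,r_n)$, an event of probability $\exp(-n\nu_d r_n^d) = e^{-\alpha}/n$; hence $\E[W_n] \to e^{-\alpha}$. I would then compute the $k$-th factorial moments: for $k$ points to be simultaneously isolated their exclusion balls must all be empty, and since two isolated points cannot lie within $r_n$ of each other these balls are asymptotically disjoint, so their empty-probabilities multiply and the $k$-th factorial moment tends to $(e^{-\alpha})^k$. By the method of moments $W_n \xrightarrow{d} \mathrm{Poisson}(e^{-\alpha})$, whence $\P(W_n = 0) \to \P(\mathrm{Poisson}(e^{-\alpha})=0) = \exp(-e^{-\alpha})$.

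The hard part is the reduction of the first paragraph, namely bounding $\P(\text{disconnected but no isolated vertex}) = o(1)$. Any such configuration contains a connected component $C$ with $2 \le |C| \le n/2$ that is separated from the rest of the graph, which forces the $r_n$-neighbourhood of $C$ to be empty of other points. I would bound the expected number of such separated components by summing, over each component size $k \ge 2$, a union bound over the possible geometric shapes of $C$ against the probability cost of its empty surrounding region. The empty-neighbourhood penalty for a cluster of two or more points is exponentially smaller than the $\Theta(1)$ cost associated with a single isolated vertex, so the total expectation vanishes; this is precisely where $d \ge 2$ enters, since in $d = 1$ a separated run of several consecutive vertices is not sufficiently penalized and the clean Gumbel limit fails. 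Two further technical points I would handle are the boundary of the region (for the clean $\nu_d$-limit one works on the torus, which is exactly the geometry induced by the toroidal metric \eqref{eq:def_toroidal_metric} in the application, so near-boundary corrections do not arise) and de-Poissonization should a fixed vertex count be desired. Assembling the three pieces gives $\P(M_n \le r_n) \to \exp(-e^{-\alpha})$.
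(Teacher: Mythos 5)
There is no internal proof to compare against here: the paper imports Lemma \ref{lem:RGG_connectivity_lemma} verbatim as a cited result of \cite{Penrose1997} and never proves it. Your reconstruction follows exactly the strategy of the cited source (and of Chapter 13 of Penrose's book): (i) a Mecke-formula first-moment computation giving $\E[W_n] = n e^{-n\nu_d r_n^d} = e^{-\alpha}$ exactly on the torus, factorial moments converging to $(e^{-\alpha})^k$ after discarding $k$-tuples with a pair at distance in $(r_n, 2r_n]$ (whose union-of-balls volume exceeds $(1+c)\nu_d r_n^d$, making their contribution $O(n^{-c}\log n)$), hence $W_n$ converges to $\mathrm{Poisson}(e^{-\alpha})$; and (ii) the reduction $\P(\text{connected}) = \P(\text{no isolated vertex}) + o(1)$. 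So your proposal is a correct outline of the known proof rather than an alternative route. Two of your side remarks are in fact load-bearing and worth stating explicitly. First, your limit $\exp(-e^{-\alpha})$ is the correct one; the lemma as printed in the paper says $\exp(e^{-\alpha})$, which exceeds $1$ and is evidently a sign typo --- the downstream use in Proposition \ref{prop:impossibility_disconnected_graph} indeed bounds the connection probability by $\exp(-e^{-\alpha})$. Second, the torus point is not cosmetic: on the cube with the genuine Euclidean metric the clean limit with the bulk constant $\nu_d$ fails for $d \geq 3$, since vertices within distance $r_n$ of a face pay only about half the volume penalty and dominate (there the connectivity threshold satisfies $n\nu_d M_n^d \sim (2 - 2/d)\log n$), so the statement must be read on the torus --- which is precisely the geometry induced by \eqref{eq:def_toroidal_metric} in the application.

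The one place your write-up is thinner than a complete proof is the step you yourself identify as hard: $\P(\text{disconnected, minimum degree} \geq 1) = o(1)$. Your heuristic that the empty-moat penalty for a clump of $k \geq 2$ points is ``exponentially smaller'' needs care, because a clump whose points sit at separation $s \ll r_n$ has empty-region volume only $\nu_d r_n^d(1 + \Theta(s/r_n))$, essentially the same as a single isolated vertex; what saves the argument is that such tight configurations carry a vanishing volume factor, and integrating over $s$ yields an expected count of order $e^{-\alpha}(\log n)^{1-d} \to 0$, which is exactly where $d \geq 2$ enters. Your diagnosis of $d = 1$ is also correct for the right reason: there, disconnection occurs through a single empty gap of length $r_n$ (volume $r_n$, not the $2r_n = \nu_1 r_n$ needed to isolate a vertex), so the $\nu_d$-formula is off by a factor of two and isolated vertices are not the bottleneck. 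Handling components of larger diameter, uniformly in $k$ up to the torus scale, is the remaining technical content of \cite{Penrose1997}; your proposal asserts it in outline but does not execute it, which is acceptable for a result the paper itself treats as a black box.
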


Recall that $G$ is formed by distributing points in a volume $n$ cube by a Poisson point process with intensity $\lambda$. To apply Lemma \ref{lem:RGG_connectivity_lemma}, we must rescale $G$ by a factor of $1/n^{1/d}$. Then, the vertex visibility graph becomes the random geometric graph $\text{RGG}(d, \lambda n, r(\log(n)/n)^{1/d})$. Now, define $M_n := \inf\{x : \text{RGG}(d, \lambda n, r(\log(n)/n)^{1/d}) \text{ is connected}\}$ and note that the vertex visibility graph is connected if and only if $M_n \leq r(\log(n)/n)^{1/d}$. Thus, we can show that the vertex visibility graph is disconnected with high probability by showing
\begin{equation}
\label{eq:Mn_disconnectivity_threshold}
    \lim_{n \to \infty} \P\bigg(M_n \leq r\Big(\frac{\log n}{n}\Big)^{1/d} \bigg) = 0
\end{equation}

\begin{prop}
\label{prop:impossibility_disconnected_graph}
    Let $G \sim \text{GSBM}(\lambda, r, n, f_\text{in}, f_\text{out}, d)$. Suppose that $d \geq 2$ and $\lambda \nu_d r^d < 1$. Then, the vertex visibility graph is disconnected with high probability. Consequently, the MLE fails with a constant, nonzero probability.
\end{prop}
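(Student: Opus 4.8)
The plan is to prove the disconnectivity claim \eqref{eq:Mn_disconnectivity_threshold} using Penrose's connectivity threshold (Lemma \ref{lem:RGG_connectivity_lemma}), and then to convert disconnectivity into failure of the MLE via a label-flipping symmetry. First I would set $m := \lambda n$, so that after the rescaling by $1/n^{1/d}$ described above the vertex visibility graph is exactly $\text{RGG}(d, m, r(\log n/n)^{1/d})$, and $M_n = \inf\{x : \text{RGG}(d,m,x) \text{ is connected}\}$ is its connectivity threshold; the graph is connected iff $M_n \leq r(\log n/n)^{1/d}$. Applying Lemma \ref{lem:RGG_connectivity_lemma} with intensity $m$, for every fixed $\alpha \in \R$ we have $\P\big(M_n \leq ((\alpha + \log m)/(m\nu_d))^{1/d}\big) \to \exp(-e^{-\alpha})$. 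I would then identify the value $\alpha_n$ making the threshold radius equal the true visibility radius: solving $r(\log n/n)^{1/d} = ((\alpha_n + \log m)/(m\nu_d))^{1/d}$ gives $\alpha_n = (\lambda \nu_d r^d - 1)\log n - \log \lambda$, which tends to $-\infty$ precisely because $\lambda \nu_d r^d < 1$.

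The hard part will be transferring the fixed-$\alpha$ limit of Lemma \ref{lem:RGG_connectivity_lemma} to the diverging sequence $\alpha_n$; the lemma controls connectivity only at a fixed $\alpha$, so some care is needed both to substitute the intensity $m = \lambda n$ correctly and to push the Gumbel-type limit through $\alpha_n \to -\infty$. I would do this by monotonicity: since the event $\{M_n \leq x\}$ is nondecreasing in $x$ and the threshold radius is increasing in $\alpha$, for any fixed $\alpha_0$ and all $n$ large enough that $\alpha_n \leq \alpha_0$ we get $\P(M_n \leq r(\log n/n)^{1/d}) \leq \P\big(M_n \leq ((\alpha_0 + \log m)/(m\nu_d))^{1/d}\big)$, and hence $\limsup_{n} \P(M_n \leq r(\log n/n)^{1/d}) \leq \exp(-e^{-\alpha_0})$. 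Letting $\alpha_0 \to -\infty$ sends the right-hand side to $0$, which establishes \eqref{eq:Mn_disconnectivity_threshold} and shows the vertex visibility graph is disconnected with high probability.

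Finally I would deduce MLE failure from disconnectivity. On the disconnected event, choose a connected component $C$ with $\emptyset \neq C \neq V$ and let $\sigma'$ be $\sigma^*$ with every label in $C$ flipped. Because edges form only between vertices within the visibility radius, no edge crosses between $C$ and $V \setminus C$, so the likelihood factorizes over components; flipping $C$ preserves all within-component relative labels and therefore leaves the likelihood unchanged, so $\sigma'$ is also a maximizer. Since $\sigma' \notin \{\sigma^*, -\sigma^*\}$ whenever $\emptyset \neq C \neq V$, the likelihood admits two maximizers that are inequivalent under a global sign flip. Conditioned on the observed data the true labeling is equally likely to be either of them, so any estimator — in particular the MLE, which is a deterministic function of the data — succeeds with conditional probability at most $1/2$. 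As disconnectivity holds with probability tending to $1$, the MLE fails with probability bounded away from $0$, which is the claimed constant, nonzero failure probability.
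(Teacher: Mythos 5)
Your proof is correct and follows essentially the same route as the paper's: the same rescaling to $\text{RGG}(d,\lambda n, r(\log n/n)^{1/d})$, the same monotonicity-in-$\alpha$ device for pushing Penrose's fixed-$\alpha$ Gumbel limit through to the effective sequence $\alpha_n \to -\infty$ (the paper phrases this as the inequality $r(\log n/n)^{1/d} \leq ((\alpha+\log(\lambda n))/(\lambda n \nu_d))^{1/d}$ for large $n$, which is your $\alpha_n \leq \alpha_0$), and the same component-flipping symmetry for MLE failure, which the paper states informally before the proposition and formalizes in Proposition \ref{prop:impossibility_1d}. One incidental point in your favor: you correctly use the limit $\exp(-e^{-\alpha})$, whereas the paper's statement of Lemma \ref{lem:RGG_connectivity_lemma} has a sign typo ($\exp(e^{-\alpha})$) that its own proof silently corrects.
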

\begin{proof}
To show \eqref{eq:Mn_disconnectivity_threshold}, we will show that for any $\alpha \in \R$,
\begin{equation*}
    \lim_{n \to \infty} \P\bigg(M_n \leq r\Big(\frac{\log n}{n}\Big)^{1/d}\bigg) \leq \exp(-e^{-\alpha}).
\end{equation*}
Fix $\alpha \in \R$. Observe that since $\lambda \nu_d r^d < 1$, we have $\log n \leq (\alpha + \log \lambda + \log n)/(\lambda \nu_d r^d)$ for sufficiently large $n$. Therefore, we have
\begin{equation*}
    r \Big(\frac{\log n}{n}\Big)^{1/d} \leq \Big(\frac{\alpha + \log (\lambda n)}{n \lambda \nu_d}\Big)^{1/d}
\end{equation*}
which implies that
\begin{equation*}
    \P\bigg(M_n \leq r \Big(\frac{\log n}{n}\Big)^{1/d}\bigg) \leq \P\bigg(M_n \leq \Big(\frac{\alpha + \log (\lambda n)}{n \lambda \nu_d}\Big)^{1/d}\bigg).
\end{equation*}
Then, taking the limit as $n \to \infty$ and applying Lemma \ref{lem:RGG_connectivity_lemma} yields
\begin{equation*}
    \lim_{n \to \infty} \P\bigg(M_n \leq r\Big(\frac{\log n}{n}\Big)^{1/d}\bigg) \leq \exp(-e^{-\alpha}).
\end{equation*}
Since this holds for all $\alpha \in \R$, we obtain
\begin{equation*}
    \lim_{n \to \infty} \P\bigg(M_n \leq r \Big( \frac{\log n}{n}\Big)^{1/d}\bigg) = 0
\end{equation*}
Then, since the vertex visibility graph is connected if and only if $M_n \leq r(\log(n)/n)^{1/d}$, we see that the vertex visibility graph is disconnected with high probability.
\end{proof}

Case 2: $\lambda \nu_d r^d \geq 1$. The following lemma gives a sufficient condition for the MLE to fail.

\begin{lemma}[Proposition 8.1 in \cite{Abbe+2020}]
\label{lem:impossibility_lemma}
    Suppose that the graph $G$ satisfies the following two conditions.
    \begin{gather}
    \label{eq:impossibility_condition_1} \lim_{n\to\infty} n\E^{0}[\1_{0 \text{ is Flip-Bad in } G \cup \{0\}}] = \infty \\
    \label{eq:impossibility_condition_2} \limsup_{n\to\infty} \frac{\int_{y \in \calS_{d,n}} \E^{0,y}[\1_{0 \text{ is Flip-Bad in } G \cup \{0, y\}} \1_{y \text{ is Flip-Bad in } G \cup \{0, y\}} ] \: m_{n,d}(dy)}{n\E^{0}[\1_{0 \text{ is Flip-Bad in } G \cup \{0\}}]^2} \leq 1
    \end{gather}
    where $m_{n,d}$ is the Haar measure. Then, exact recovery is impossible. 
    
    $\E^0$ represents the expectation with respect to the graph $G \cup \{0\}$ where the community label of $0$ is sampled uniformly from $\{+1, -1\}$ and the edges from $0$ to all other vertices is sampled according to the model. Similarly, $\E^{0,y}$ is the expectation with respect to the graph $G \cup \{0, y\}$ where the community label of $0$ and $y$ are sampled uniformly from $\{+1, -1\}$ and the edges from $0$ and $y$ to all other vertices is sampled according to the model. 
\end{lemma}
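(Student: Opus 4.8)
The plan is to run a second-moment argument on the number of Flip-Bad vertices. Let $N$ denote the number of vertices $v \in V$ that are Flip-Bad in $G$, i.e.\ for which $\sign(\tau(v,\sigma^*)) \neq \sigma^*(v)$. The key reduction is that $N \geq 1$ already defeats the optimal estimator: if some vertex is Flip-Bad, then flipping its label does not decrease the likelihood, so $\sigma^*$ is not the unique strict maximizer of the likelihood, and the MLE---which is optimal, coinciding with the MAP estimator under the uniform prior---cannot output $\sigma^*$ up to a global sign flip, since for large $n$ a single-vertex flip is not a global sign flip. Hence it suffices to prove $\P(N \geq 1) \to 1$, and the entire task reduces to controlling the first two moments of $N$.

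For the first moment I would apply the Mecke equation to the intensity-$\lambda$ Poisson process on $\calS_{d,n}$: writing the Flip-Bad indicator as a functional of a vertex together with its surrounding configuration, Mecke's formula gives $\E[N] = \lambda \int_{\calS_{d,n}} \E^{x}[\1_{x \text{ is Flip-Bad in } G \cup \{x\}}]\,dx$, and translation invariance of the model on the torus (via Slivnyak's theorem) collapses the integrand to its value at the origin, so $\E[N] = \lambda n\,\E^{0}[\1_{0 \text{ is Flip-Bad in } G \cup \{0\}}]$. Condition \eqref{eq:impossibility_condition_1} then forces $\E[N] \to \infty$. For the second factorial moment I would use the two-point Slivnyak--Mecke formula, obtaining $\E[N(N-1)] = \lambda^2 \iint_{\calS_{d,n}^2} \E^{x,y}[\1_{x \text{ FB}}\1_{y \text{ FB}}]\,dx\,dy = \lambda^2 n \int_{\calS_{d,n}} \E^{0,y}[\1_{0 \text{ FB}}\1_{y \text{ FB}}]\,m_{n,d}(dy)$ by translation invariance. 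Dividing by $\E[N]^2 = \lambda^2 n^2 \big(\E^{0}[\1_{0 \text{ FB}}]\big)^2$ reproduces exactly the ratio in \eqref{eq:impossibility_condition_2}, so that $\limsup_n \E[N(N-1)]/\E[N]^2 \leq 1$.

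The conclusion follows from Chebyshev's inequality in the form $\P(N = 0) \leq \big(\E[N^2] - \E[N]^2\big)/\E[N]^2 = \E[N(N-1)]/\E[N]^2 + 1/\E[N] - 1$. The first term has limsup at most $1$ by the second-moment step, while $1/\E[N] \to 0$ by the first-moment step, so $\limsup_n \P(N = 0) \leq 0$, giving $\P(N \geq 1) \to 1$. Combined with the reduction, the MLE fails with probability tending to $1$, and exact recovery is impossible.

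The main obstacle is the Palm-calculus bookkeeping: one must verify that the Flip-Bad event is a measurable functional of the added point together with the ambient configuration so that the Mecke formulas apply, and check that the Palm expectations $\E^{0}$ and $\E^{0,y}$ coincide precisely with the quantities in the hypotheses---in particular that adding the second deterministic point at $y$ produces exactly the graph $G \cup \{0,y\}$ used in \eqref{eq:impossibility_condition_2}, and that translation invariance legitimately extracts the factor $n$. A secondary subtlety is the reduction step itself: one should confirm that a single Flip-Bad vertex genuinely defeats the \emph{optimal} estimator, accounting for likelihood ties in the definition (the $\geq 0$ condition) and for the global sign-flip symmetry, rather than merely defeating a genie-aided per-vertex rule.
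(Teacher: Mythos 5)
The paper does not prove this lemma itself---it is imported verbatim as Proposition 8.1 of \cite{Abbe+2020}---and your proposal correctly reconstructs essentially the argument given in that source: a second-moment method on the number $N$ of Flip-Bad vertices, where the one- and two-point Slivnyak--Mecke formulas together with exact translation invariance on the torus yield $\E[N] = \lambda n \, \E^{0}[\1_{0 \text{ is Flip-Bad in } G \cup \{0\}}]$ and $\E[N(N-1)] = \lambda^2 n \int_{\calS_{d,n}} \E^{0,y}[\1_{0 \text{ FB}}\1_{y \text{ FB}}]\, m_{n,d}(dy)$, so the factors of $\lambda$ cancel and conditions \eqref{eq:impossibility_condition_1}--\eqref{eq:impossibility_condition_2} give $\P(N=0) \to 0$ via your Chebyshev identity $\P(N=0) \leq \E[N(N-1)]/\E[N]^2 + 1/\E[N] - 1$. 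Your reduction from the existence of a Flip-Bad vertex to failure of the optimal estimator is also sound, including the tie case: flipping a single vertex produces a labeling with likelihood at least that of $\sigma^*$ which (for large $n$, when $|V| \geq 2$ with high probability) is neither $\sigma^*$ nor $-\sigma^*$, so the MAP estimator errs with probability bounded away from zero, which suffices to rule out exact recovery.
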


We first show that (\ref{eq:impossibility_condition_1}) holds. We will adapt the proof of Lemma B.4 in \cite{Gaudio+2025}, which uses techniques from large deviations to lower-bound $\E^0[\1_{0 \text{ is Flip-Bad in } G \cup \{0\}}]$.

\begin{definition}[Rate Function]
    Let $X$ be any random variable. We define the rate function of $X$, denoted $\Lambda_X^*$, as
    \begin{equation*}
        \Lambda_X^*(\alpha) := \sup_{t \in \R} (t\alpha - \Lambda_X(t))
    \end{equation*}
    where $\Lambda_X(t) := \log \E[\exp(tX)]$ is the cumulant generating function of $X$.
\end{definition}

\begin{lemma}[Cramer's Theorem]
\label{lem:Cramer}
Let $\{X_i\}$ be a sequence of i.i.d. random variables with rate function $\Lambda_X^*$. Then, for all $\alpha > \E[X_1]$,
    \begin{equation*}
        \lim_{m \to \infty} \frac{1}{m} \log \P(\sum_{i=1}^m X_i \geq m\alpha) = -\Lambda^*_X(\alpha).
    \end{equation*}
\end{lemma}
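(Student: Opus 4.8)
The plan is to establish the claim by proving matching asymptotic upper and lower bounds on $\frac{1}{m}\log \P(\sum_{i=1}^m X_i \geq m\alpha)$, which is the standard two-sided route to Cram\'er's theorem. Throughout I write $S_m := \sum_{i=1}^m X_i$ and recall that $\Lambda_X(t) = \log \E[e^{tX}]$, so that by independence $\E[e^{tS_m}] = e^{m \Lambda_X(t)}$. The rate function $\Lambda_X^*(\alpha) = \sup_{t \in \R}(t\alpha - \Lambda_X(t))$ is the Legendre transform of the convex function $\Lambda_X$.

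For the upper bound I would use the exponential Chernoff inequality. For any $t \geq 0$, Markov's inequality gives $\P(S_m \geq m\alpha) = \P(e^{tS_m} \geq e^{tm\alpha}) \leq e^{-tm\alpha}\E[e^{tS_m}] = e^{-m(t\alpha - \Lambda_X(t))}$. Optimizing over $t \geq 0$ yields $\frac{1}{m}\log\P(S_m \geq m\alpha) \leq -\sup_{t \geq 0}(t\alpha - \Lambda_X(t))$ for every $m$. The key point is that since $\alpha > \E[X_1] = \Lambda_X'(0)$ and $\Lambda_X$ is convex with $\Lambda_X(0)=0$, the map $t \mapsto t\alpha - \Lambda_X(t)$ is increasing at $t=0$, so its supremum over $t \geq 0$ agrees with the supremum over all of $\R$, namely $\Lambda_X^*(\alpha)$. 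This already gives $\limsup_m \frac{1}{m}\log\P(S_m \geq m\alpha) \leq -\Lambda_X^*(\alpha)$ (indeed non-asymptotically).

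For the matching lower bound I would use an exponential change of measure (tilting). Assuming first that there is a point $t^\star \geq 0$ with $\Lambda_X'(t^\star) = \alpha$, define the tilted law $\tilde{\P}$ by $d\tilde\P/d\P(x) = e^{t^\star x - \Lambda_X(t^\star)}$; under $\tilde\P$ the $X_i$ remain i.i.d.\ with mean $\Lambda_X'(t^\star) = \alpha$, and $\Lambda_X^*(\alpha) = t^\star\alpha - \Lambda_X(t^\star)$. Fixing $\epsilon > 0$ and restricting to the slab $S_m \in [m\alpha, m\alpha + m\epsilon]$, I rewrite the $\P$-probability as the $\tilde\P$-expectation of the inverse Radon--Nikodym factor $e^{-t^\star S_m + m\Lambda_X(t^\star)}$ over this slab; on the slab that factor is at least $e^{-m(t^\star\alpha - \Lambda_X(t^\star)) - m t^\star \epsilon} = e^{-m\Lambda_X^*(\alpha) - m t^\star \epsilon}$. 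Since $S_m/m \to \alpha$ $\tilde\P$-almost surely by the law of large numbers, $\tilde\P(S_m \in [m\alpha, m\alpha + m\epsilon])$ is bounded below by a constant tending to a positive value as $m \to \infty$, so $\frac{1}{m}\log \P(S_m \geq m\alpha) \geq -\Lambda_X^*(\alpha) - t^\star \epsilon - o(1)$. Taking $m \to \infty$ and then $\epsilon \to 0$ completes the lower bound and hence the equality.

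The main obstacle is the lower bound in the degenerate cases where no tilting parameter $t^\star$ solves $\Lambda_X'(t^\star) = \alpha$: for example when $\alpha$ exceeds the essential supremum of $X$ (so $\Lambda_X^*(\alpha) = +\infty$ and both sides equal $-\infty$, which must be checked directly), or when $\Lambda_X$ fails to be steep at the boundary of its effective domain. Resolving these requires either truncating $X$ at a high level and passing to the limit, or handling the boundary value $\alpha = \operatorname{ess\,sup} X$ separately. A clean way to avoid them entirely is to note that in every application in this paper the underlying variables are bounded (the relevant log-likelihood ratios $P$ and $Q$ are bounded by $\log(1/\xi)$ via Assumption \ref{eq:function_assumption_2}), so $\Lambda_X$ is finite and smooth on all of $\R$ and the tilting argument applies without qualification; alternatively one may simply cite the classical statement (e.g.\ Dembo--Zeitouni).
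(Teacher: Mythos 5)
The paper does not actually prove this lemma: it is the classical Cram\'er theorem, stated and invoked as a known result (it is used only inside the proof of Lemma \ref{lem:impossibility_lemma_1}, where the relevant increments are the bounded log-likelihood ratios $W$), so there is no internal proof to compare against. Your proposal supplies the standard textbook argument---Chernoff upper bound plus exponential tilting for the lower bound---and it is essentially correct, including the honest identification of the degenerate cases (non-steepness, $\alpha \geq \operatorname{ess\,sup} X$) and the correct observation that Assumption \ref{assump:regularity_conditions}\ref{eq:function_assumption_2} makes $P$ and $Q$, hence $W$, bounded, so $\Lambda_X$ is finite and smooth on all of $\R$ and the tilting argument goes through in every instance the paper needs. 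Citing Dembo--Zeitouni, as you suggest, is exactly what the paper implicitly does.

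Two small technical points. First, in the lower bound your justification of $\widetilde{\P}\big(S_m \in [m\alpha, m\alpha + m\epsilon]\big)$ being bounded below is not quite right as stated: the law of large numbers gives only two-sided concentration around $m\alpha$, and says nothing about the probability of the \emph{one-sided} slab $\{S_m \geq m\alpha\}$. You need the central limit theorem under $\widetilde{\P}$ (the slab probability tends to $1/2$ when the tilted variance is positive, and zero variance is impossible here since it would force $X$ constant, contradicting $\alpha > \E[X_1]$), or else the standard workaround of tilting to a mean $\alpha' > \alpha$ and using a two-sided window $[m(\alpha'-\delta), m(\alpha'+\delta)] \subset (m\alpha,\infty)$, then sending $\delta \to 0$ and $\alpha' \downarrow \alpha$ via continuity of $\Lambda_X^*$. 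Second, your argument that $\sup_{t \geq 0}(t\alpha - \Lambda_X(t)) = \Lambda_X^*(\alpha)$ invokes $\Lambda_X'(0) = \E[X_1]$, which presumes differentiability at $0$; a cleaner route avoiding any smoothness hypothesis is Jensen's inequality, $\Lambda_X(t) \geq t\,\E[X_1]$, which gives $t\alpha - \Lambda_X(t) \leq t(\alpha - \E[X_1]) < 0$ for $t < 0$, so negative $t$ cannot improve on the value $0$ attained at $t = 0$. Neither issue affects the validity of the approach, and both are immaterial for the bounded variables to which the paper applies the lemma.
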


\begin{lemma}
\label{lem:impossibility_lemma_1}
    Let $G \sim \text{GSBM}(\lambda, r, n, f_\text{in}, f_\text{out}, d)$, and suppose that $\lambda \nu_d r^d \geq 1$ and $I(f_\text{in}, f_\text{out}) < 1$. Then, there exists a constant $\beta > 0$ such that
    \begin{equation*}
        \E^0[\1_{0 \text{ is Flip-Bad in } G \cup \{0\}}] > n^{-1 + \beta},
    \end{equation*}
    which implies that \eqref{eq:impossibility_condition_1} is satisfied. 
\end{lemma}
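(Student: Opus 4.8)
The plan is to reduce the quantity $\E^0[\1_{0\text{ is Flip-Bad in }G\cup\{0\}}]$ to the tail probability $\P(Z\geq 0)$ for the random variable $Z$ in \eqref{eq:def_Z}, and then to prove a large-deviations \emph{lower} bound matching the Chernoff \emph{upper} bound already established in Lemma \ref{lem:refine_Chernoff}. For the reduction, recall that $0$ is Flip-Bad exactly when $\sign(\tau(0,\sigma^*))\neq\sigma^*(0)$. Placing $0$ via the Haar measure, stationarity of the Poisson process makes the number of visible vertices in each community exactly $\mathrm{Pois}(\lambda\nu_d r^d\log n/2)$, so the distributional identities $(\tau(0,\sigma^*)\mid\sigma^*(0)=-1)\sim Z$ and $(\tau(0,\sigma^*)\mid\sigma^*(0)=+1)\sim -Z$ give
\begin{equation*}
\E^0[\1_{0\text{ is Flip-Bad in }G\cup\{0\}}]=\tfrac12\P(Z\geq 0)+\tfrac12\P(-Z\leq 0)=\P(Z\geq 0).
\end{equation*}
Thus it suffices to show $\P(Z\geq 0)\geq n^{-I(f_\text{in},f_\text{out})-o(1)}$; since $I(f_\text{in},f_\text{out})<1$, taking $\beta=(1-I(f_\text{in},f_\text{out}))/2>0$ then yields $\P(Z\geq 0)>n^{-1+\beta}$ for large $n$, establishing \eqref{eq:impossibility_condition_1}.

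The main obstacle is that $Z$ is a \emph{compound} Poisson sum with a random number of terms, so Cramér's theorem (Lemma \ref{lem:Cramer}), which governs sums of a \emph{deterministic} number of i.i.d.\ terms, does not apply directly. Simply conditioning on $N_+=N_-=\lambda\nu_d r^d\log n/2$ and applying Cramér to the resulting i.i.d.\ sum is \emph{not} enough: it suppresses the fluctuations of the Poisson counts, which contribute to the optimal large deviation, and produces a rate strictly larger than $I(f_\text{in},f_\text{out})$. The resolution---and the crux of the argument---is to Poissonize. Writing $c:=\lambda\nu_d r^d$ and $L:=\floor{\log n}$, the superposition property of Poisson random variables lets me decompose
\begin{equation*}
Z\stackrel{d}{=}\sum_{\ell=1}^{L}\tilde Z_\ell+Z_{\mathrm{rem}},
\end{equation*}
where $\tilde Z_1,\tilde Z_2,\ldots$ are i.i.d.\ copies of a compound Poisson variable built from the fixed constant Poisson parameter $c/2$ (independent of $n$) for each of $P$ and $Q$, and $Z_{\mathrm{rem}}$ is an independent compound Poisson variable with remaining parameter in $[0,c/2)$. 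Since $\{\sum_{\ell}\tilde Z_\ell\geq 0\}\cap\{Z_{\mathrm{rem}}\geq 0\}$ forces $Z\geq 0$ and the two events are independent, and since $\P(Z_{\mathrm{rem}}\geq 0)\geq\P(Z_{\mathrm{rem}}=0)\geq e^{-c}$ (both remainder counts vanish), I obtain $\P(Z\geq 0)\geq e^{-c}\,\P(\sum_{\ell=1}^L\tilde Z_\ell\geq 0)$.

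It now remains to apply Cramér to the genuine i.i.d.\ sum $\sum_{\ell=1}^L\tilde Z_\ell$ at level $\alpha=0$. I first check the hypothesis $\alpha=0>\E[\tilde Z_1]$: since $\E[\tilde Z_1]=(c/2)(\E[P]+\E[Q])$ and $\E[P],\E[Q]$ are negated Kullback--Leibler divergences between $\mathrm{Bern}(f_\text{in})$ and $\mathrm{Bern}(f_\text{out})$, Assumption \ref{eq:function_assumption_3} (so that $f_\text{in}\neq f_\text{out}$ almost everywhere) gives $\E[\tilde Z_1]<0$. Lemma \ref{lem:Cramer} then yields $\tfrac1L\log\P(\sum_{\ell=1}^L\tilde Z_\ell\geq 0)\to-\Lambda^*_{\tilde Z}(0)=\inf_{t}\Lambda_{\tilde Z}(t)$, where $\Lambda_{\tilde Z}(t)=(c/2)(M_P(t)+M_Q(t)-2)$. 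Using $M_Q(t)=M_P(1-t)$ and the convexity of $\log M_P$, this cumulant generating function is convex and symmetric about $t=1/2$, so its infimum is attained at $t=1/2$ and equals $c(M_P(1/2)-1)=-I(f_\text{in},f_\text{out})$ by \eqref{eq:MGF_P/2,Q/2} and \eqref{eq:information_metric}. Hence $\P(\sum_{\ell=1}^L\tilde Z_\ell\geq 0)=n^{-I(f_\text{in},f_\text{out})(1+o(1))}$ because $L=\floor{\log n}\sim\log n$, and combining with the previous display gives $\P(Z\geq 0)\geq n^{-I(f_\text{in},f_\text{out})-o(1)}$, completing the proof. The only delicate points are verifying the drift $\E[\tilde Z_1]<0$ and identifying the Cramér rate with $I(f_\text{in},f_\text{out})$ via the symmetry $M_Q(t)=M_P(1-t)$; both are routine given the MGF computations already carried out in Lemma \ref{lem:refine_Chernoff}.
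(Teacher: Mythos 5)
Your proof is correct, but it executes the key large-deviations step by a genuinely different route than the paper. Both arguments reduce the Flip-Bad probability to the lower tail of the log-likelihood ratio and both rest on Cram\'er's theorem plus a Poissonization device; the difference is where the Poisson randomness is handled. The paper conditions on the total number of visible vertices $N(0) \sim \text{Pois}(\lambda\nu_d r^d \log n)$, applies Cram\'er to the i.i.d.\ mixture variables $W_i$ (where $W = P$ or $Q$ with probability $1/2$ each) for every fixed count $m \geq \delta\log n$, and then averages the resulting bound $\exp\bigl(m(-\Lambda_W^*(0)-\epsilon)\bigr)$ over the Poisson law of $N(0)$; the Poisson moment generating function turns the exponent into $\lambda\nu_d r^d\bigl(e^{-\Lambda_W^*(0)-\epsilon}-1\bigr)$, which is matched to $-I(f_\text{in},f_\text{out})$ via the identity $I = \lambda\nu_d r^d\bigl(1-e^{-\Lambda_W^*(0)}\bigr)$. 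That averaging step is precisely what repairs the defect you correctly flag: freezing the counts at their typical values would give the strictly worse exponent $-\lambda\nu_d r^d\,\Lambda_W^*(0) < -I$ (since $1-e^{-x} < x$ for $x>0$), so your warning is exactly the phenomenon the paper's computation of $\E\bigl[\exp\bigl(N(0)(-\Lambda_W^*(0)-\epsilon)\bigr)\bigr]$ is designed to avoid. You instead keep the two-Poisson representation of $Z$, use infinite divisibility to split it into $\floor{\log n}$ i.i.d.\ compound Poisson blocks plus a remainder with $O(1)$ parameter, apply Cram\'er at the block level, and identify the block rate $\Lambda^*_{\tilde Z}(0) = \lambda\nu_d r^d\bigl(1 - M_P(1/2)\bigr) = I(f_\text{in},f_\text{out})$ through the symmetry $M_Q(t) = M_P(1-t)$. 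What your route buys: no lower-tail estimate for $N(0)$ is needed, so the hypothesis $\lambda\nu_d r^d \geq 1$ is never used (the paper invokes it solely to prove $\P(N(0)<\delta\log n) \leq n^{-1+c}$), and the rate $I$ emerges directly as a Legendre transform at $0$. What the paper's route buys: it avoids the block decomposition and the remainder bookkeeping. One cosmetic correction to your write-up: the convexity you need is that of the cumulant generating function $\Lambda_{\tilde Z}(t) = \tfrac{1}{2}\lambda\nu_d r^d\bigl(M_P(t)+M_Q(t)-2\bigr)$ itself (automatic, since every cumulant generating function is convex, or via convexity of $t \mapsto M_P(t)$), not of $\log M_P$; combined with the symmetry about $t=1/2$ this pins the infimum at $t = 1/2$, as you claim.
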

\begin{proof}
    Recall that the log-likelihood ratio for the label of $0$ is given by $\tau(0, \sigma^*)$, where $\tau(0, \sigma^*) > 0$ indicates a greater likelihood for $\sigma^*(0) = +1$ and $\tau(0, \sigma^*) < 0$ indicates a greater likelihood for $\sigma^*(0) = -1$. Thus, the probability that $0$ is Flip-Bad is
    \begin{equation}
    \label{eq:flip_bad_probability}
        \E[\1_{0 \text{ is Flip-Bad in } G \cup \{0\} }] = \frac{1}{2} \P(\tau(0, \sigma^*) \leq 0 \mid \sigma^*(0) = +1) + \frac{1}{2}\P(\tau(0, \sigma^*) \geq 0 \mid \sigma^*(0) = -1).
    \end{equation}
    We first show that $\P(\tau(0, \sigma^*) \geq 0 \mid \sigma^*(0) = -1) > n^{-1 + \beta}$. Let $N(0)$ be the number of vertices visible to $0$. By the law of total probability,
    \begin{equation}
    \label{eq:MLE_error_probability}
        \P(\tau(0, \sigma^*) \geq 0 \mid \sigma^*(0) = -1) = \sum_{m=0}^\infty \P(\tau(0, \sigma^*) \geq 0 \mid \sigma^*(0) = -1, N(0) = m) \P(N(0) = m).
    \end{equation}
    We focus on analyzing $\P(\tau(0, \sigma^*) \geq 0 \mid \sigma^*(0) = -1, N(0) = m)$. Let $\{W_i\}$ be a sequence of i.i.d. random variables from the distribution $W$, which we construct as follows: Consider $P$ and $Q$ as defined in (\ref{eq:def_P}) and (\ref{eq:def_Q}), respectively. Then, let $W$ be the mixture distribution $W = TP + (1-T)Q$ where $T$ is an independent $\text{Bern}(1/2)$ random variable. That is, $W = P$ with probability 1/2 and $W = Q$ with probability 1/2. 
    
    From the definitions of $\tau(0, \sigma^*)$ and $W$, we have
    \begin{equation*}
        \left( \tau(0, \sigma^*) \mid \sigma^*(0)=-1, N(0) = m \right) \sim \sum_{i=1}^m W_i
    \end{equation*}
    which implies that
    \begin{equation} \label{eq:conditional_MLE_error_probability}
        \P(\tau(0, \sigma^*) \geq 0 \mid \sigma^*(0)=-1, N(0) = m) = \P\Big(\sum_{i=1}^m W_i \geq 0\Big).
    \end{equation}
    We will bound the probability in \eqref{eq:conditional_MLE_error_probability} using Cramer's Theorem (Lemma \ref{lem:Cramer}). First, we show that $\E[W] < 0$. Computing $\E[W]$, we have that
    \begin{equation*}
    \begin{aligned}
        \E[W] &= \frac{1}{2}(\E[P] + \E[Q]) \\ 
        &= \frac{1}{2} \E\left[ \log\left(\frac{f_\text{in}(D)}{f_\text{out}(D)}\right)(f_\text{out}(D) - f_\text{in}(D)) + \log\left(\frac{1-f_\text{in}(D)}{1-f_\text{out}(D)}\right)(f_\text{in}(D) - f_\text{out}(D)) \right].
    \end{aligned}
    \end{equation*}
    where $D$ is defined as in \eqref{eq:def_D}. Then, we use the fact that for any constants $0 < a,b < 1$,
    \begin{equation*}
        \log\left(\frac{a}{b}\right)(b-a) + \log\left(\frac{1-a}{1-b}\right)(a-b) < 0,
    \end{equation*}
    which shows that $E[W] < 0$. Therefore, we can apply Cramer's Theorem (Lemma \ref{lem:Cramer}) with $\alpha = 0$, which yields
    \begin{equation*}
        \lim_{m \to \infty} \frac{1}{m} \log \P(\sum_{i=1}^m W_i \geq 0) = -\Lambda_W^*(0)
    \end{equation*}
    where $\Lambda_W^*$ is the rate function of $W$. This gives us a lower bound for $\P(\sum_{i=1}^m W_i \geq 0)$ because for any fixed $\epsilon > 0$, there exists $M$ such that
    \begin{equation*}
        \frac{1}{m} \log \P(\sum_{i=1}^m W_i \geq 0) > -\Lambda_W^*(0) - \epsilon \quad \text{for all }m \geq M
    \end{equation*}
    which implies that
    \begin{equation}
    \label{eq:Cramer_error_probability}
        \P(\sum_{i=1}^m W_i \geq 0) > \exp\Big(m(-\Lambda_W^*(0) - \epsilon)\Big) \quad \text{for all }m \geq M.
    \end{equation}

    Now, we return to simplifying Equation (\ref{eq:MLE_error_probability}). Let $\delta > 0$ be a constant, which we will determine later. Then, we can substitute (\ref{eq:conditional_MLE_error_probability}) into (\ref{eq:MLE_error_probability}) and use (\ref{eq:Cramer_error_probability}) to obtain
    \begin{align}
    \label{eq:MLE_error_probability_simplified} 
        \P(\tau(0, \sigma^*) \geq 0 \mid \sigma^*(0) = -1) &= \sum_{m=0}^\infty \P\bigg(\sum_{i=1}^m W_i \geq 0\bigg) \P(N(0) = m) \notag \\
        &\geq \sum_{m=\delta \log n}^\infty \P\bigg(\sum_{i=1}^m W_i \geq 0\bigg) \P(N(0) = m) \notag \\
        &\geq \sum_{m=\delta \log n}^\infty \exp\Big(m(-\Lambda_W^*(0) - \epsilon)\Big) \P(N(0) = m) \notag \\
        &= \sum_{m=0}^\infty \exp\Big(m(-\Lambda_W^*(0) - \epsilon)\Big) \P(N(0) = m) \notag \\
        &\qquad- \sum_{m=0}^{\delta \log n - 1} \exp\Big(m(-\Lambda_W^*(0) - \epsilon)\Big) \P(N(0) = m) \notag \\
        &\geq \bigg[ \sum_{m=0}^\infty \exp\Big(m(-\Lambda_W^*(0) - \epsilon)\Big) \P(N(0) = m) \bigg] - \P(N(0) < \delta \log n) \notag \\
        &= \E\Big[\exp\Big(N(0)(-\Lambda_W^*(0) - \epsilon) \Big)\Big] - \P(N(0) < \delta \log n).
    \end{align}
    The last inequality holds because the rate function $\Lambda_W^*(0)$ is non-negative, which implies that $0 < \exp\big(m(-\Lambda_W^*(0) - \epsilon)\big) < 1$. Now, recall that $N(0) \sim \text{Pois}(\lambda \nu_d r^d \log n)$, so its moment-generating function is 
    \begin{equation*}
        \E[\exp(tN(0))] = \exp\Bigl(\lambda \nu_d r^d (e^t -1) \log n\Bigr) = n^{\lambda \nu_d r^d(e^t - 1)}.
    \end{equation*}
    Letting $t = -\Lambda_W^*(0) - \epsilon$, we obtain that
    \begin{equation*}
        \E\Big[\exp\Big(N(0) (-\Lambda_W^*(0) - \epsilon) \Big)\Big] = n^{\lambda \nu_d r^d \big(\exp(-\Lambda_W^*(0) - \epsilon) - 1\big)}.
    \end{equation*}
    Therefore, (\ref{eq:MLE_error_probability_simplified}) becomes
    \begin{equation}
    \label{eq:MLE_error_probability_simplified_2}
        \P(\tau(0, \sigma^*) \geq 0 \mid \sigma^*(0) = -1) \geq n^{\lambda \nu_d r^d \big(\exp(-\Lambda_W^*(0) - \epsilon) - 1\big)} - \P(N(0) < \delta \log n).
    \end{equation}
    Next, we compute $\Lambda_W^*(0)$. By the definition of the rate function,
    \begin{equation*}
        \Lambda_W^*(0) = \sup_{t \in \R} -\Lambda_W(t) = - \inf_{t \in \R} \Lambda_W(t) = - \inf_{t \in \R} \log \E[e^{tW}]. 
    \end{equation*}
    Then, observe that
    \begin{equation*}
    \begin{aligned}
        \E[e^{tW}] &= \frac{1}{2}\E[e^{tP}] + \frac{1}{2}\E[e^{tQ}] \\
        & = \int_0^r \bigg( f_\text{in}(x)^t f_\text{out}(x)^{1-t} + (1-f_\text{in}(x))^t(1-f_\text{out}(x))^{1-t} \bigg) \frac{dx^{d-1}}{r^d} dx \\
        &\qquad+ \int_0^r \bigg( f_\text{out}(x)^t f_\text{in}(x)^{1-t} + (1-f_\text{out}(x))^t(1-f_\text{in}(x))^{1-t} \bigg) \frac{dx^{d-1}}{r^d} dx
    \end{aligned}
    \end{equation*}
    where $\E[e^{tP}]$ and $\E[e^{tQ}]$ were calculated in \eqref{eq:MGF_P} and \eqref{eq:MGF_Q}. Using the fact that 
    \begin{equation*}
        \argmin_{0 \leq t \leq 1} \: a^tb^{1-t} + b^ta^{1-t} + (1-a)^t(1-b)^{1-t}+(1-b)^t(1-a)^{1-t} = \frac{1}{2}
    \end{equation*}
    for any constants $0 < a,b < 1$, we obtain that $\E[e^{tW}]$ is minimized at $t = 1/2$. Therefore,
    \begin{equation*}
    \begin{aligned}
        \Lambda_W^*(0) &= - \log \E[e^{W/2}] \\
        &= -\log \bigg( \int_0^r \left( \sqrt{f_\text{in}(x) f_\text{out}(x)} + \sqrt{(1-f_\text{in}(x))(1-f_\text{out}(x))} \right) \frac{dx^{d-1}}{r^d} dx \bigg). \\
    \end{aligned}
    \end{equation*}
    Now, we use the condition $I(f_\text{in}, f_\text{out}) < 1$. Observe that
    \begin{equation*}
    \begin{aligned}
        I(f_\text{in}, f_\text{out}) &= \lambda \nu_d r^d \int_0^r \left(1- \sqrt{f_\text{in}(x) f_\text{out}(x)} - \sqrt{(1-f_\text{in}(x))(1-f_\text{out}(x))}\right) \frac{dx^{d-1}}{r^d} dx  \\
        &= \lambda \nu_d r^d - \lambda \nu_d r^d \int_0^r \left( \sqrt{f_\text{in}(x) f_\text{out}(x)} + \sqrt{(1-f_\text{in}(x))(1-f_\text{out}(x))} \right) \frac{dx^{d-1}}{r^d} dx  \\
        &= \lambda \nu_d r^d - \lambda \nu_d r^d \exp(-\Lambda_W^*(0)) \\
        &= \lambda \nu_d r^d \Big(1 - \exp(-\Lambda_W^*(0))\Big) \\
        &< 1. 
    \end{aligned}
    \end{equation*}
    Thus, we obtain that $\lambda \nu_d r^d \big( \exp(-\Lambda_W^*(0)) - 1\big) > -1$. Next, let $\beta = \big(1 + \lambda \nu_d r^d\big(\exp(-\Lambda_W^*(0)) - 1\big)\big)/3$ and note that $\beta > 0$ from the above calculation. Since
    \begin{equation*}
        \lim_{\epsilon \to 0} \lambda \nu_d r^d \Bigl(\exp(-\Lambda_W^*(0) - \epsilon) - 1\Bigr) = \lambda \nu_d r^d \Bigl(\exp(-\Lambda_W^*(0)) - 1\Bigr),
    \end{equation*}
    we can choose a sufficiently small $\epsilon$ such that
    \begin{equation}
    \label{eq:impossibility_criterion_consequence}
        \lambda \nu_d r^d \Bigl(\exp(-\Lambda_W^*(0) - \epsilon) - 1\Bigr) > -1 + 2\beta.
    \end{equation}
    Substituting (\ref{eq:impossibility_criterion_consequence}) into (\ref{eq:MLE_error_probability_simplified_2}), we obtain
    \begin{equation}
    \label{eq:MLE_error_probability_simplified_3}
        \P(\tau(0, \sigma^*) \geq 0 \mid \sigma^*(0) = -1) > n^{-1 + 2\beta} - \P(N(0) < \delta \log n).
    \end{equation}
    Now, let $c < 2\beta$ be a constant. We will show that $\P(N(0) < \delta \log n) \leq n^{-1 + c}$ for a suitable choice of $\delta$. Since $N(0) \sim \text{Pois}(\lambda \nu_d r^d \log n)$, we can use the Chernoff bound (Lemma \ref{lem:poisson_Chernoff}) to obtain that for any $\delta > 0$,
    \begin{equation*}
    \begin{aligned}
        \P(N(0) < \delta \log n) &\leq \exp\bigg(\delta \log n - \lambda \nu_d r^d \log n - \log(\delta / (\lambda \nu_d r^d)) \delta \log n\bigg) \\
        &= \exp\bigg( \Big(\delta - \lambda \nu_d r^d - \delta \log(\delta/(\lambda \nu_d r^d))\Big) \log n \bigg) \\
        &= n^{\delta - \lambda \nu_d r^d - \delta \log(\delta/(\lambda \nu_d r^d)} \\
        &= n^{f(\delta)}
    \end{aligned}
    \end{equation*}
    where $f(\delta) := \delta - \lambda \nu_d r^d - \delta \log(\delta/(\lambda \nu_d r^d)$. Then, observe that $\lim_{\delta \downarrow 0} f(\delta) = -\lambda \nu_d r^d$ and that $f(\delta)$ is strictly increasing when $0 < \delta < \lambda \nu_d r^d$. Since $\lambda \nu_d r^d \geq 1$, for sufficiently small $\delta$ we have that $f(\delta) < -1 + c$. Therefore, $\P(N(0) < \delta \log n) \leq n^{-1 + c}$. Consequently, \eqref{eq:MLE_error_probability_simplified_3} becomes
    \begin{equation}
    \label{eq:MLE_error_probability_final_1}
        \P(\tau(0, \sigma^*) \geq 0 \mid \sigma^*(0) = -1) > n^{-1 + 2\beta} - n^{-1 + c} > n^{-1 + \beta}.
    \end{equation}
    A similar argument shows that 
    \begin{equation}
    \label{eq:MLE_error_probability_final_2}
        \P(\tau(0, \sigma^*) \leq 0 \mid \sigma^*(0) = +1) > n^{-1+\beta}.
    \end{equation}
    Finally, we substitute (\ref{eq:MLE_error_probability_final_1}) and (\ref{eq:MLE_error_probability_final_2}) into (\ref{eq:flip_bad_probability}), which shows that
    \begin{equation*}
        \E[\1_{0 \text{ is Flip-Bad in } G \cup \{0\} }] > n^{-1 + \beta}.
    \end{equation*}
    Therefore, (\ref{eq:impossibility_condition_1}) is satisfied. 
\end{proof}

Now, we show that (\ref{eq:impossibility_condition_2}) holds. The proof is essentially identical to the proof of Lemma 8.3 from \cite{Abbe+2020}, with some minor modifications.

\begin{lemma}
\label{lem:impossibility_lemma_2}
    Let $G \sim \text{GSBM}(\lambda, r, n, f_\text{in}, f_\text{out}, d)$, and suppose that $\lambda \nu_d r^d \geq 1$ and $I(f_\text{in}, f_\text{out}) < 1$. Then, (\ref{eq:impossibility_condition_2}) is satisfied.
\end{lemma}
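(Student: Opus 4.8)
The plan is to verify the second-moment (variance) condition \eqref{eq:impossibility_condition_2} by the standard route: split the integral over $y$ according to whether the neighborhoods $\calN(0)$ and $\calN(y)$ are disjoint. The key structural observation is that, since $\tau(0,\sigma^*)$ in \eqref{eq:def_tau} only sums over vertices visible to $0$, the event $\{0 \text{ is Flip-Bad}\}$ is a function solely of the point locations, community labels, and edges incident to vertices in $\calN(0)$; the analogous statement holds for $y$. Writing $\rho_0 := \log(1/\xi)$, which is a constant by Assumption \ref{eq:function_assumption_2}, I will decompose $\calS_{d,n} = \calS_{\mathrm{far}} \cup \calS_{\mathrm{near}}$, where $\calS_{\mathrm{far}} := \{y : \|y\| > 2r(\log n)^{1/d}\}$ and $\calS_{\mathrm{near}}$ is its complement, so that $\calN(0) \cap \calN(y) = \varnothing$ precisely on $\calS_{\mathrm{far}}$, while $\calS_{\mathrm{near}}$ has volume $\nu_d (2r)^d \log n = \Theta(\log n)$.

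On $\calS_{\mathrm{far}}$, the two Flip-Bad events depend on disjoint collections of randomness (Poisson points in disjoint regions, independent labels, and disjoint edge sets), hence are independent; moreover, adding the far vertex $y$ does not alter $\tau(0,\sigma^*)$, and torus homogeneity gives $\E^{0,y}[\1_{y \text{ Flip-Bad}}] = \E^0[\1_{0 \text{ Flip-Bad}}]$. Thus the integrand factorizes exactly as $(\E^0[\1_{0 \text{ Flip-Bad}}])^2$ on $\calS_{\mathrm{far}}$, and since $\mathrm{Vol}(\calS_{\mathrm{far}}) \le n$, this region contributes at most $1$ to the ratio (indeed it tends to $1$, since $\mathrm{Vol}(\calS_{\mathrm{near}})/n \to 0$).

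On $\calS_{\mathrm{near}}$ I will use $\1_{0 \text{ Flip-Bad}} \1_{y \text{ Flip-Bad}} \le \1_{0 \text{ Flip-Bad}}$ and bound $\E^{0,y}[\1_{0 \text{ Flip-Bad}}]$. When $r(\log n)^{1/d} < \|y\| \le 2r(\log n)^{1/d}$ we have $y \notin \calN(0)$, so $\E^{0,y}[\1_{0 \text{ Flip-Bad}}] = \E^0[\1_{0 \text{ Flip-Bad}}]$ exactly. When $y \in \calN(0)$, adding $y$ appends a single summand to $\tau(0,\sigma^*)$ whose magnitude is at most $\rho_0$ by Assumption \ref{eq:function_assumption_2}; hence, conditioning on $\sigma^*(0)$, the event $\{0 \text{ Flip-Bad in } G \cup \{0,y\}\}$ is contained in the event that $\tau(0,\sigma^*)$ in $G \cup \{0\}$ lies within $\rho_0$ of its decision threshold, and I claim this yields $\E^{0,y}[\1_{0 \text{ Flip-Bad}}] \le C\, \E^0[\1_{0 \text{ Flip-Bad}}]$ for an absolute constant $C$. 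Granting the claim, $\int_{\calS_{\mathrm{near}}} \E^{0,y}[\1_{0 \text{ Flip-Bad}}]\, m_{n,d}(dy) \le C\, \mathrm{Vol}(\calS_{\mathrm{near}})\, \E^0[\1_{0 \text{ Flip-Bad}}] = O(\log n)\, \E^0[\1_{0 \text{ Flip-Bad}}]$; dividing by $n (\E^0[\1_{0 \text{ Flip-Bad}}])^2$ and invoking the lower bound $\E^0[\1_{0 \text{ Flip-Bad}}] > n^{-1+\beta}$ from Lemma \ref{lem:impossibility_lemma_1} gives a contribution of $O(\log n / n^{\beta}) = o(1)$. Combining the two regions yields $\limsup_{n\to\infty}$ of the ratio $\le 1$, which is exactly \eqref{eq:impossibility_condition_2}.

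The main obstacle is the comparability claim $\E^{0,y}[\1_{0 \text{ Flip-Bad}}] \le C\,\E^0[\1_{0 \text{ Flip-Bad}}]$ for $y \in \calN(0)$: the crude Chernoff bound $\P(Z \ge -\rho_0) \le e^{\rho_0/2} n^{-I(f_\text{in},f_\text{out})}$ is too lossy to match the denominator, so I cannot simply substitute it for $\E^0[\1_{0 \text{ Flip-Bad}}]$. Instead I must show the rare Flip-Bad probability is stable under a bounded shift of the log-likelihood threshold, i.e. $\P(Z \ge -\rho_0) \le C\,\P(Z \ge 0)$ for $Z$ as in \eqref{eq:def_Z}. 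This is a Bahadur--Rao type refinement: exponentially tilting by the optimal parameter $t^* = 1/2$ (the same optimizer that produces $I(f_\text{in},f_\text{out})$ in \eqref{eq:MGF_Z/2} and $\Lambda_W^*(0)$ in Lemma \ref{lem:impossibility_lemma_1}) recenters $Z$ near $0$ with fluctuations of order $\sqrt{\log n}$, so shifting the threshold by the constant $\rho_0$ changes the tilted mass by only a bounded factor. Making this precise requires a local limit estimate for the tilted sum, which is the one genuinely new ingredient beyond the argument of \cite[Lemma 8.3]{Abbe+2020}.
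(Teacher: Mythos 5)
Your decomposition of $\calS_{d,n}$ and your treatment of the far region coincide with the paper's proof: there too the integral is split at $\|y\| = 2r(\log n)^{1/d}$, the two Flip-Bad events are independent and identically distributed on the far region (disjoint neighborhoods, torus homogeneity), and that region contributes at most $\big(n - (2r)^d\nu_d\log n\big)/n \leq 1$ to the ratio. The divergence---and the genuine gap---is in the near region. Your argument hinges on the comparability claim $\E^{0,y}[\1_{0 \text{ Flip-Bad}}] \leq C\,\E^{0}[\1_{0 \text{ Flip-Bad}}]$, equivalently $\P(Z \geq -\rho_0) \leq C\,\P(Z \geq 0)$ with $\rho_0 := \log(1/\xi)$, which you leave unproven and yourself flag as requiring a Bahadur--Rao/local-limit refinement. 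This is not a routine step: $Z$ is a Poisson-indexed random sum whose increments mix over the continuous radius $D$ and a Bernoulli edge indicator, so one must verify a non-lattice condition (or handle the lattice case---note that for step functions $f_\text{in}, f_\text{out}$, which are the motivating examples, the increments genuinely are lattice-valued), control the random index $N_\pm$, and make the estimate uniform in $n$. None of this is supplied, so as written the proposal is not a proof.

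The paper sidesteps the issue with a different near-region device, following Abbe--Baccelli--Sankararaman: letting $Y$ be uniform on $B(0, 2r(\log n)^{1/d})$ and $A$ the event that this ball contains a point of the process, it identifies the near integral with a conditional probability, $\P^0\big(0 \text{ is Flip-Bad in } G \cup \{0\} \mid A\big) \leq \E^0[\1_{0 \text{ Flip-Bad}}]/\P(A)$ where $\P(A) = 1 - o(1)$; dividing by $n\,\E^0[\1_{0 \text{ Flip-Bad}}]^2$, only the crude lower bound $\E^0[\1_{0 \text{ Flip-Bad}}] > n^{-1+\beta}$ of Lemma \ref{lem:impossibility_lemma_1} is then needed to kill this term. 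If you prefer to keep your pointwise route, there is a repair that needs no local limit theorem: the Chernoff bound at $t = 1/2$ (exactly as in Lemma \ref{lem:refine_Chernoff}) gives $\E^{0,y}[\1_{0 \text{ Flip-Bad}}] \leq \P(Z \geq -\rho_0) \leq e^{\rho_0/2}\, n^{-I(f_\text{in}, f_\text{out})}$ uniformly over $y \in \calN(0)$, while the Cram\'er argument inside the proof of Lemma \ref{lem:impossibility_lemma_1} in fact yields the nearly matching lower bound $\E^0[\1_{0 \text{ Flip-Bad}}] \geq n^{-I(f_\text{in}, f_\text{out}) - \epsilon}$ for every fixed $\epsilon > 0$ (the stated exponent $-1+\beta$ is a deliberate weakening). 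With these two estimates your near-region contribution is $O\big(\log n \cdot n^{I(f_\text{in}, f_\text{out}) - 1 + 2\epsilon}\big) = o(1)$ once $\epsilon < (1 - I(f_\text{in}, f_\text{out}))/2$, using the hypothesis $I(f_\text{in}, f_\text{out}) < 1$; this completes your argument with only tools already present in the paper.
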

\begin{proof}
    Define $B(0, a) := \{x \in \calS_{d, n} : \|x\|_2 \leq a\}$ as the Euclidean ball around the origin with radius $a$. Then, observe that 
    \begin{equation}
    \label{eq:flip_bad_integral}
    \begin{aligned}
        &\int_{y \in \calS_{d,n}} \E^{0,y}[\1_{0 \text{ is Flip-Bad in } G \cup \{0, y\}} \1_{y \text{ is Flip-Bad in } G \cup \{0, y\}} ] \: m_{n,d}(dy) \\
        &= \int_{y \in B(0, 2 r(\log n)^{1/d})} \E^{0,y}[\1_{0 \text{ is Flip-Bad in } G \cup \{0, y\}} \1_{y \text{ is Flip-Bad in } G \cup \{0, y\}} ] \: m_{n,d}(dy) \\
        &\qquad+ \int_{y \in \calS_{d,n} \setminus B(0, 2 r(\log n)^{1/d})} \E^{0,y}[\1_{0 \text{ is Flip-Bad in } G \cup \{0, y\}} \1_{y \text{ is Flip-Bad in } G \cup \{0, y\}} ] \: m_{n,d}(dy) \\
        &\leq \int_{y \in B(0, 2 r(\log n)^{1/d})} \E^{0,y}[\1_{0 \text{ is Flip-Bad in } G \cup \{0, y\}}] \: m_{n,d}(dy) \\
        &\qquad+ \int_{y \in \calS_{d,n} \setminus B(0, 2 r(\log n)^{1/d})} \E^{0,y}[\1_{0 \text{ is Flip-Bad in } G \cup \{0, y\}} \1_{y \text{ is Flip-Bad in } G \cup \{0, y\}} ] \: m_{n,d}(dy) \\
        &= \int_{y \in B(0, 2 r(\log n)^{1/d})} \E^{0,y}[\1_{0 \text{ is Flip-Bad in } G \cup \{0, y\}}] \: m_{n,d}(dy) \\
        &\qquad+ \int_{y \in \calS_{d,n} \setminus B(0, 2 r(\log n)^{1/d})} \E^0[\1_{0 \text{ is Flip-Bad in } G \cup \{0\}}] \E^y[\1_{y \text{ is Flip-Bad in } G \cup \{0\}}] \: m_{n,d}(dy) \\ 
        &= \int_{y \in B(0, 2 r(\log n)^{1/d})} \E^{0,y}[\1_{0 \text{ is Flip-Bad in } G \cup \{0, y\}}] \: m_{n,d}(dy) \\
        &\qquad+ \int_{y \in \calS_{d,n} \setminus B(0, 2 r(\log n)^{1/d})} \E^0[\1_{0 \text{ is Flip-Bad in } G \cup \{0\}}]^2 \: m_{n,d}(dy).
    \end{aligned}
    \end{equation}
    The last two equalities follow from the fact that if $y \in \calS_{d,n} \setminus B(0, 2 r(\log n)^{1/d})$, then $\calN(0)$ and $\calN(y)$ are disjoint. Then, since the events $\{0 \text{ is Flip-Bad in } G \cup \{0, y\}\}$ and $\{y \text{ is Flip-Bad in } G \cup \{0, y\}\}$ only depend on $\calN(0)$ and $\calN(y)$ respectively, the events are independent and have the same probability.

    Now, we consider each term separately. For the first term, let $Y$ be a uniform random variable on $B(0, 2r(\log n)^{1/d})$ and let $A$ be the event that there exists a point in $B(0, 2(\log n)^{1/d})$ other than the origin. Then, we can write the first term as
    \begin{align}
    \label{eq:flip_bad_integral_term_1}
        \int_{y \in B(0, 2 r(\log n)^{1/d})} \E^{0,y}[\1_{0 \text{ is Flip-Bad in } G \cup \{0, y\}}] \: m_{n,d}(dy) &= \P^{0, Y}(0 \text{ is Flip-Bad in } G \cup \{0, Y\}) \notag \\
        &= \P^0(0 \text{ is Flip-Bad in } G \cup \{0\} \mid A) \notag \\
        &\leq \frac{\P^0(0 \text{ is Flip-Bad in } G \cup \{0\})}{\P(A)} \notag \\
        &= \frac{\E^0[\1_{0 \text{ is Flip-Bad in } G \cup \{0\}}]}{1 - n^{-\lambda  (2r)^d \nu_d \log n}}.
    \end{align}
    For the second term, observe that $\E^0[\1_{0 \text{ is Flip-Bad in } G \cup \{0\}}]^2$ is a constant in $y$. Hence, the second term becomes
    \begin{align}
    \label{eq:flip_bad_integral_term_2}
        &\int_{y \in \calS_{d,n} \setminus B(0, 2 r(\log n)^{1/d})} \E^0[\1_{0 \text{ is Flip-Bad in } G \cup \{0\}}]^2 \: m_{n,d}(dy) \notag \\
        &= \text{Vol}\big(\calS_{d,n} \setminus B(0, 2 r(\log n)^{1/d})\big) \E^0[\1_{0 \text{ is Flip-Bad in } G \cup \{0\}}]^2 \notag \\
        &= \left(n - (2r)^d \nu_d \log n\right) \E^0[\1_{0 \text{ is Flip-Bad in } G \cup \{0\}}]^2.
    \end{align}
    Therefore, we substitute (\ref{eq:flip_bad_integral_term_1}) and (\ref{eq:flip_bad_integral_term_2}) into (\ref{eq:flip_bad_integral}) to obtain 
    \begin{equation*}
    \begin{aligned}
        &\int_{y \in \calS_{d,n}} \E^{0,y}[\1_{0 \text{ is Flip-Bad in } G \cup \{0, y\}} \1_{y \text{ is Flip-Bad in } G \cup \{0, y\}}] \: m_{n,d}(dy) \\
        &\leq \frac{\E^0[\1_{0 \text{ is Flip-Bad in } G \cup \{0\}}]}{1 - n^{-\lambda \nu_d (2r)^d}} + \left(n - (2r)^d \nu_d \log n\right) \E^0[\1_{0 \text{ is Flip-Bad in } G \cup \{0\}}]^2
    \end{aligned}
    \end{equation*}
    which shows that
    \begin{equation*}
    \begin{aligned}
        &\frac{\int_{y \in \calS_{d,n}} \E^{0,y}[\1_{0 \text{ is Flip-Bad in } G \cup \{0, y\}} \1_{y \text{ is Flip-Bad in } G \cup \{0, y\}} ] \: m_{n,d}(dy)}{n\E^{0}[\1_{0 \text{ is Flip-Bad in } G \cup \{0\}}]^2} \\
        &\leq \frac{1}{n \E^{0,y}[\1_{0 \text{ is Flip-Bad in } G \cup \{0, y\}}] (1 - n^{-\lambda \nu_d (2r)^d })} +  \frac{n - (2r)^d \nu_d \log n}{n} \\
        &=1 + \frac{1}{n \E^{0,y}[\1_{0 \text{ is Flip-Bad in } G \cup \{0, y\}}] (1 - n^{-\lambda \nu_d (2r)^d })} -\frac{(2r)^d\nu_d \log n}{n}.
    \end{aligned}
    \end{equation*}
    By Lemma \ref{lem:impossibility_lemma_1}, we know that $\E^0[\1_{0 \text{ is Flip-Bad in } G \cup \{0\}}] \geq n^{-1 + \beta}$ for some constant $\beta > 0$ when $\lambda \nu_d r^d \geq 1$ and $I(f_\text{in}, f_\text{out}) < 1$. Therefore, 
    \begin{equation*}
    \begin{aligned}
        &\frac{\int_{y \in \calS_{d,n}} \E^{0,y}[\1_{0 \text{ is Flip-Bad in } G \cup \{0, y\}} \1_{y \text{ is Flip-Bad in } G \cup \{0, y\}} ] \: m_{n,d}(dy)}{n\E^{0}[\1_{0 \text{ is Flip-Bad in } G \cup \{0\}}]^2} \\
        &\leq 1 + \frac{1}{n^\beta(1 - n^{-\lambda \nu_d (2r)^d })} - \frac{(2r)^d\nu_d \log n}{n}.
    \end{aligned}
    \end{equation*}
    Finally, we take the limit as $n \to \infty$, which yields
    \begin{equation*}
        \limsup_{n \to \infty} \frac{\int_{y \in \calS_{d,n}} \E^{0,y}[\1_{0 \text{ is Flip-Bad in } G \cup \{0, y\}} \1_{y \text{ is Flip-Bad in } G \cup \{0, y\}} ] \: m_{n,d}(dy)}{n\E^{0}[\1_{0 \text{ is Flip-Bad in } G \cup \{0\}}]^2} \leq 1.
    \end{equation*}
    Therefore, (\ref{eq:impossibility_condition_2}) is satisfied.
    
\end{proof}

From Lemmas \ref{lem:impossibility_lemma_1} and \ref{lem:impossibility_lemma_2}, we see that the conditions of Lemma \ref{lem:impossibility_lemma} holds. Therefore, we can conclude that any estimator fails to achieve exact recovery.

\begin{prop}
\label{prop:impossibility_connected_graph}
    Let $G \sim GSBM(\lambda, r, n, f_\text{in}, f_\text{out}, d)$ and suppose that $I(f_\text{in}, f_\text{out}) < 1$. Then, any estimator $\tilde{\sigma}$ fails to achieve exact recovery.
\end{prop}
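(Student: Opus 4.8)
The plan is to take the condition $I(f_\text{in}, f_\text{out}) < 1$ as given and dispatch it by a dichotomy on the quantity $\lambda \nu_d r^d$, which governs whether the vertex visibility graph typically percolates. Since the MLE coincides with the (optimal) MAP estimator, it suffices to exhibit a regime-dependent mechanism forcing the MLE to fail with probability bounded away from zero; optimality then transfers this failure to every estimator, ruling out exact recovery. The two regimes call for genuinely different mechanisms, so I would split the argument accordingly and verify that the split is exhaustive.

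\textbf{Case $\lambda \nu_d r^d < 1$.} Here the intensity is too low for the visibility graph to be connected. For $d \geq 2$ I would invoke Proposition \ref{prop:impossibility_disconnected_graph} directly: the vertex visibility graph is disconnected with high probability, and flipping the labels of an entire connected component leaves the likelihood of the observed graph unchanged, producing a Flip-Bad vertex and hence MLE failure. For $d = 1$ the hypothesis reads $2\lambda r < 1$, so in particular $\lambda r < 1$, which places us in the first branch of Theorem \ref{thm:impossibility}; disconnectivity of the one-dimensional visibility graph (a standard gap argument for the 1D random geometric graph) again yields a nontrivial component whose relative sign can be flipped freely, so that the same conclusion holds.

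\textbf{Case $\lambda \nu_d r^d \geq 1$.} Now the graph is typically connected and the disconnectivity mechanism is unavailable, so I would instead appeal to the second-moment criterion of Lemma \ref{lem:impossibility_lemma}. Lemma \ref{lem:impossibility_lemma_1} supplies the first-moment lower bound \eqref{eq:impossibility_condition_1}, showing the expected density of Flip-Bad vertices is at least $n^{-1+\beta}$ for some $\beta > 0$; this is exactly where the threshold $I < 1$ enters, through the Cramér rate $\Lambda_W^*(0)$ and the identity $I = \lambda \nu_d r^d\bigl(1 - e^{-\Lambda_W^*(0)}\bigr)$. Lemma \ref{lem:impossibility_lemma_2} provides the matching correlation bound \eqref{eq:impossibility_condition_2}, obtained by splitting the integral into the near-diagonal region $B(0, 2r(\log n)^{1/d})$, where overlapping neighborhoods are bounded crudely, and the complementary far region, where the two Flip-Bad events depend on disjoint neighborhoods and therefore factorize. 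Together these verify both hypotheses of Lemma \ref{lem:impossibility_lemma}, which forces impossibility of exact recovery.

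Since the two cases are exhaustive, combining them proves the claim. The conceptual work has been front-loaded into Lemmas \ref{lem:impossibility_lemma_1} and \ref{lem:impossibility_lemma_2}, so the proposition itself is mostly assembly; the only genuine care needed is confirming the dichotomy is complete, in particular that the $d = 1$, low-intensity corner is absorbed into the standard one-dimensional disconnectivity result rather than slipping through the $d \geq 2$ restriction of Proposition \ref{prop:impossibility_disconnected_graph}. I expect the hardest already-established ingredient to be the sharpness of the Cramér lower bound in Lemma \ref{lem:impossibility_lemma_1}, since it must track the $I < 1$ boundary precisely rather than yielding only a cruder first-moment estimate.
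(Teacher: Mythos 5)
Your proposal matches the paper's argument essentially step for step: the same dichotomy on $\lambda \nu_d r^d$, with Proposition~\ref{prop:impossibility_disconnected_graph} (for $d \geq 2$) and the one-dimensional disconnectivity result (Proposition~\ref{prop:impossibility_1d}, via $\nu_1 = 2$ so $\lambda \nu_1 r < 1$ implies $\lambda r < 1$) handling the sparse regime, and Lemmas~\ref{lem:impossibility_lemma_1} and~\ref{lem:impossibility_lemma_2} verifying the two conditions of Lemma~\ref{lem:impossibility_lemma} in the regime $\lambda \nu_d r^d \geq 1$, exactly where the identity $I(f_\text{in}, f_\text{out}) = \lambda \nu_d r^d\bigl(1 - e^{-\Lambda_W^*(0)}\bigr)$ brings in the threshold. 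Your explicit check that the $d=1$, low-intensity corner is absorbed by the one-dimensional result rather than by the $d \geq 2$ proposition is a point the paper leaves implicit in its final combination of propositions, but the route is the same.
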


We now show that if $d = 1$ and $\lambda r < 1$, then exact recovery is impossible. We first use a result from random geometric graphs literature which provides the connectivity threshold for random geometric graphs using the Euclidean metric rather than the toroidal metric \eqref{eq:def_toroidal_metric}.
\begin{lemma}[Theorem 2.2 in \cite{Muthukrishnan2005}]
\label{lem:1d_connectivity_threshold_Euclidean}
    Consider a set of vertices distributed on the unit interval by a Poisson process of intensity $n$. Then, $\text{RGG}(1, n, (c \log n)/n)$ is connected with high probability if $c > 1$ and disconnected with high probability if $c < 1$, where distance is measured using the Euclidean metric.
\end{lemma}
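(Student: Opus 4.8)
The plan is to prove both directions by reducing connectivity of the one–dimensional random geometric graph to a statement about the spacings between consecutive points, and then analyzing those spacings through the Poisson process. Write $r_n := (c \log n)/n$ for the connection radius, and let $\Phi$ denote the Poisson process of intensity $n$ on $[0,1]$. The key structural fact, special to dimension one, is that after sorting the points $X_{(1)} < X_{(2)} < \cdots < X_{(N)}$, the graph $\text{RGG}(1, n, r_n)$ is an interval graph whose only possible edges join $X_{(i)}$ to $X_{(i+1)}$, and such an edge is present exactly when $X_{(i+1)} - X_{(i)} < r_n$. Hence the graph is connected if and only if every consecutive spacing is strictly less than $r_n$, and disconnected exactly when some spacing is at least $r_n$. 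Both directions thus reduce to controlling the largest spacing.

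For the connected regime $c > 1$, I would bound the expected number of ``disconnecting'' spacings by a first–moment computation. A consecutive spacing exceeding $r_n$ corresponds to a non-rightmost point $x \in \Phi$ for which the window $(x, x+r_n]$ contains no other point of $\Phi$. Crucially, since such an $x$ must have a further point to its right inside $[0,1]$, it satisfies $x < 1 - r_n$, so the window lies entirely in $[0,1]$ and the boundary causes no complication. By the Campbell--Mecke formula for the Poisson process (equivalently, by conditioning on $N$ and using uniform spacings),
\[
\E\Big[\#\{x \in \Phi : x < 1 - r_n,\ \Phi((x, x+r_n]) = 0\}\Big] = \int_0^{1-r_n} n\, e^{-n r_n}\, dx \le n\, e^{-n r_n} = n^{1-c}.
\]
For $c > 1$ this tends to $0$, so Markov's inequality shows that with high probability no such spacing exists; all consecutive gaps are $< r_n$ and the graph is connected.

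For the disconnected regime $c < 1$, I would produce a disconnecting gap by exhibiting an empty interval of length $r_n$ well inside $[0,1]$. Partition $[1/4, 3/4]$ into $M := \lfloor 1/(2r_n) \rfloor$ disjoint intervals $I_1, \dots, I_M$ of length $r_n$. Because $\Phi$ assigns independent counts to disjoint sets, the emptiness events $\{I_k \cap \Phi = \varnothing\}$ are independent, each of probability $e^{-n r_n} = n^{-c}$, so
\[
\P\big(\text{every } I_k \text{ is nonempty}\big) = (1 - n^{-c})^M \le \exp(-M n^{-c}),
\]
and since $M n^{-c} \ge (1/(2r_n) - 1) n^{-c} = \tfrac{n^{1-c}}{2c\log n} - n^{-c} \to \infty$ when $c < 1$, with high probability some $I_k$ is empty. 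Separately, a direct Poisson tail bound shows $[0,1/4]$ and $[3/4,1]$ each contain a point with high probability. On the intersection of these events the empty middle interval separates a nonempty left region from a nonempty right region, so no edge crosses it and the graph is disconnected.

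The main obstacle is obtaining the sharp threshold $c = 1$ rather than a constant off by a factor of two. In the upper bound, a naive covering of $[0,1]$ by windows of length $r_n/2$ would only yield connectivity for $c > 2$; the tight constant requires the Palm-calculus first moment above, which counts genuine inter-point gaps with the correct exponent $e^{-n r_n} = n^{-c}$, together with the observation that a disconnecting gap forces $x < 1 - r_n$ so the boundary contributes nothing. In the lower bound, the corresponding care is to use strictly disjoint intervals so that independence alone—rather than a second-moment argument—drives the ``all nonempty'' probability to zero, and to confine the empty interval to the interior so that it provably separates two nonempty parts of the point set rather than merely trimming an end.
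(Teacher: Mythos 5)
The paper never proves this lemma: it is imported as a black box (Theorem 2.2 of \cite{Muthukrishnan2005}) and only \emph{used}, in the proof of Lemma \ref{lem:1d_connectivity_threshold_toroidal}. So there is no internal proof to compare against; what you have done is supply a self-contained proof of the cited result, and your argument is correct. The reduction of connectivity to the maximal consecutive spacing is the right one-dimensional structural fact; the Mecke/first-moment computation correctly gives an expected number of disconnecting gaps at most $n\,e^{-nr_n}=n^{1-c}\to 0$ for $c>1$ (including the observation that a gap between two actual points forces $x<1-r_n$, so the boundary never interferes); and for $c<1$ the disjoint-blocks argument inside $[1/4,3/4]$, combined with the two end-quarters being occupied with probability $1-e^{-n/4}$, produces an empty interval of length $r_n$ that genuinely separates two nonempty vertex sets. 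This is essentially the classical maximal-spacing argument (Gnedenko-type), elementary and sharp, which is exactly what one wants for a result the paper otherwise treats as external.

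One slip worth fixing, though it does not affect correctness: your claim that the only possible edges of the one-dimensional RGG join \emph{consecutive} order statistics is false (if three points lie within a window of width $r_n$, the first and third are also adjacent). What you actually need, and what is true independently of that claim, is the equivalence: the graph is connected if and only if every consecutive spacing is below $r_n$ --- one direction via the path through consecutive points, the other because no edge can straddle a gap of length at least $r_n$. Stating the equivalence this way removes the false intermediate assertion; the strict-versus-nonstrict inequality conventions at distance exactly $r_n$ are measure-zero events and immaterial.
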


Now, we show that random geometric graphs using the toroidal metric have the same connectivity threshold.
\begin{lemma}
\label{lem:1d_connectivity_threshold_toroidal}
    Consider a set of vertices distributed on the unit interval by a Poisson process of intensity $n$. Then, $\text{RGG}(1, n, (c \log n)/n)$ is connected with high probability if $c > 1$ and disconnected with high probability if $c < 1$, where distance is measured using the toroidal metric \eqref{eq:def_toroidal_metric}.
\end{lemma}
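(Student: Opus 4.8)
The plan is to work directly on the circle. Identifying the two endpoints of the unit interval turns the toroidal metric \eqref{eq:def_toroidal_metric} (with $d=1$) into the arc-length metric on a circle of circumference $1$, so $\text{RGG}(1,n,\rho)$ with $\rho := (c\log n)/n$ becomes a Poisson process of intensity $n$ on this circle in which two points are joined exactly when their cyclic distance is below $\rho$. Since $\rho \ll 1/2$, a point is adjacent precisely to the points lying in the arc of radius $\rho$ around it, so the connected components are the maximal runs of consecutive points separated by \emph{large cyclic gaps}, i.e.\ inter-point spacings of length $\geq \rho$ (these are exactly the non-edges between cyclically consecutive points). Letting $g$ denote the number of such large gaps, the circle RGG is connected when $g \leq 1$ and disconnected when $g \geq 2$. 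The two regimes $c>1$ and $c<1$ are then handled by controlling $g$.

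For the connected regime $c>1$ I would avoid analysing $g$ and instead use monotonicity. Because the toroidal distance never exceeds the Euclidean distance, every edge of the Euclidean $\text{RGG}(1,n,\rho)$ is also an edge of the toroidal one, so the toroidal graph contains the Euclidean graph as a spanning subgraph on the same vertex set. By Lemma \ref{lem:1d_connectivity_threshold_Euclidean} the Euclidean graph is connected with high probability when $c>1$, and a spanning supergraph of a connected graph is connected; hence the toroidal graph is connected with high probability as well.

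For the disconnected regime $c<1$ the supergraph comparison points the wrong way, so I would show directly that $g \geq 2$ with high probability. A Palm/Mecke computation gives the first moment $\E[g] = n\,\P(\mathrm{Exp}(n) \geq \rho) = n e^{-n\rho} = n^{1-c}$, which diverges since $c<1$. To upgrade this divergent mean into $\P(g \geq 2) \to 1$ while sidestepping the subtlety that several adjacent empty arcs may coalesce into a single gap, I would partition the circle into $m := \floor{1/\rho}$ consecutive arcs $J_1,\dots,J_m$ of equal length $1/m \geq \rho$ and call $J_i$ \emph{isolating} if $J_i$ is empty while both cyclic neighbours $J_{i-1},J_{i+1}$ contain a point. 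An isolating arc forces a maximal empty arc of length $\geq \rho$ that is confined to $J_{i-1}\cup J_i \cup J_{i+1}$, so two isolating arcs whose indices differ by at least $3$ (cyclically) produce two \emph{distinct} large gaps, whence $g \geq 2$. Restricting to the indices $i \in \{1,4,7,\dots\}$, the corresponding isolating events depend on disjoint triples of arcs and are therefore independent, each of probability $e^{-n/m}(1-e^{-n/m})^2 = \Theta(n^{-c})$, using $n/m = c\log n\,(1+o(1))$. The number of these events that occur is binomial with mean $\Theta(n^{1-c}/\log n) \to \infty$, and an elementary lower-tail estimate ($\P(\mathrm{Binom}=0)+\P(\mathrm{Binom}=1) \leq (1+\mu)e^{-\mu}$ for mean $\mu$) forces at least two to occur with high probability. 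This yields $g \geq 2$ and hence disconnection.

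The main obstacle is precisely this last step: a divergent expected number of empty arcs does not by itself guarantee two \emph{distinct} components, both because consecutive empty arcs merge into one gap and because the toroidal wrap-around could a priori re-link the two ``ends.'' The isolating-arc device resolves both issues at once, since it localizes each gap inside a bounded window and, by spacing the chosen windows at least three arcs apart, renders the relevant events independent so that routine binomial concentration applies. The only remaining bookkeeping is to confirm $1/m \geq \rho$ and $n/m = n\rho(1+o(1)) = c\log n\,(1+o(1))$, so that each empty-arc probability is genuinely of order $n^{-c}$; these estimates are routine.
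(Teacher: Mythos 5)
Your proposal is correct, and the connectivity half ($c>1$) is the same monotonicity argument the paper uses (toroidal distance is dominated by Euclidean distance, so the toroidal graph is a spanning supergraph of the Euclidean one). For the disconnection half ($c<1$), however, you take a genuinely different route. The paper never analyzes gaps directly: it fixes $c < c' < 1$, takes two \emph{independent} Euclidean graphs $G_1, G_2 \sim \text{RGG}(1, n/2, (2c\log n)/n)$, notes each is disconnected with high probability by Lemma \ref{lem:1d_connectivity_threshold_Euclidean}, adjoins the two intervals into one of length $2$, and rescales by $1/2$; each half contributes one break, so the resulting $\text{RGG}(1,n,(c\log n)/n)$ has at least two breaks, which survives the toroidal identification. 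Your argument instead proves the two-gap property from scratch via the isolating-arc construction, using only Poisson independence over disjoint arcs and a binomial lower-tail bound. Both proofs hinge on the same key insight---one gap does not disconnect a circle, two do---but the paper buys brevity by recycling the Euclidean disconnectivity result as a black box, while your proof is self-contained for this direction (it never invokes the $c<1$ half of Lemma \ref{lem:1d_connectivity_threshold_Euclidean}) and is quantitatively sharper, since it shows the number of large gaps is in fact $n^{1-c+o(1)}$ rather than merely $\geq 2$. Two trivial bookkeeping points in your write-up: when $3 \nmid m$ the index set $\{1,4,7,\dots\}$ wraps around so that the first and last selected triples may overlap (drop one index to restore independence), and the bound $\P(\mathrm{Binom}=0)+\P(\mathrm{Binom}=1) \leq (1+\mu)e^{-\mu}$ holds only up to a $(1+o(1))$ factor from replacing $(1-p)^{N-1}$ by $e^{-\mu}$; neither affects the conclusion.
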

\begin{proof}
For $c > 1$ connectivity immediately follows from Lemma \ref{lem:1d_connectivity_threshold_Euclidean}.

For the disconnectivity result, fix $c < c' < 1$, and consider two random geometric graphs $G_1, G_2 \sim \text{RGG}(1, n/2, (2c \log n)/n)$ under the Euclidean metric. By Lemma \ref{lem:1d_connectivity_threshold_Euclidean} and the observation that $(2c \log n)/n < (2c' \log (n/2))/n$, we have that $G_1$ and $G_2$ are disconnected with high probability. Now, we adjoin the line segments containing $G_1$ and $G_2$ to form a random geometric graph $H$ on an interval of length 2, with intensity $n/2$ and visibility radius $(2c \log n)/n$. Then $H$ is disconnected in at least two places with high probability.
    We then rescale $H$ by a factor of $1/2$ to produce a random geometric graph $H'$, and note that $H' \sim \text{RGG}(1, n, (c \log n)/n)$. Since $H$ and $H'$ have the same connectivity, we obtain that $H'$ is disconnected in at least two places with high probability. This implies that $H'$ is disconnected under the toroidal metric as well.
\end{proof}

Using Lemma \ref{lem:1d_connectivity_threshold_toroidal}, we can show that the vertex visibility graph of $G$ is disconnected with high probability when $\lambda r < 1$. Consequently, the MLE fails with a constant, nonzero probability, which implies that exact recovery is impossible.

\begin{prop}
\label{prop:impossibility_1d}
    Let $G \sim \text{GSBM}(\lambda, r, n, f_\text{in}, f_\text{out}, d)$. If $d = 1$ and $\lambda r < 1$, then the vertex visibility graph of $G$ is disconnected with probability $1 - o(1)$. Therefore, any estimator $\hat{\sigma}$ fails to achieve exact recovery.
\end{prop}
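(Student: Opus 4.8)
The plan is to show that the vertex visibility graph of $G$ is, after rescaling, precisely a one-dimensional random geometric graph of the kind handled by Lemma \ref{lem:1d_connectivity_threshold_toroidal}, and then to deduce impossibility from disconnectivity exactly as in Case 1. Concretely, $G$ places its vertices in the interval $\calS_{1,n} = [-n/2, n/2]$ via a Poisson process of intensity $\lambda$, and the vertex visibility graph joins two vertices whenever their toroidal distance is below the visibility radius $r \log n$ (here $d=1$, so $(\log n)^{1/d} = \log n$). Rescaling space by the factor $1/n$ maps the interval to $[-1/2, 1/2]$, preserves the toroidal structure, turns the Poisson intensity into $N := \lambda n$, and shrinks the visibility radius to $r \log n / n$. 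Hence the vertex visibility graph is distributed exactly as $\text{RGG}(1, \lambda n, r \log n / n)$ under the toroidal metric, so it suffices to show this graph is disconnected with probability $1 - o(1)$.

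To invoke Lemma \ref{lem:1d_connectivity_threshold_toroidal}, I would express the radius in the canonical form $(c \log N)/N$. Writing $\log N = \log n + \log \lambda$, one computes $c = \rho N / \log N = r\lambda \log n / (\log n + \log\lambda) \to \lambda r$, so the effective constant is $\lambda r$, which is strictly below $1$ by hypothesis. Since the connectivity of an RGG is monotone in its radius under the natural coupling on a common Poisson process (a larger radius only adds edges), I would fix any $c'$ with $\lambda r < c' < 1$ and check that for all sufficiently large $n$ the inequality $r \log n / n < c' \log(\lambda n)/(\lambda n)$ holds; this rearranges to $(c' - \lambda r)\log n > -c' \log \lambda$, which is eventually true because the left side diverges while the right side is constant. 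By Lemma \ref{lem:1d_connectivity_threshold_toroidal} the graph $\text{RGG}(1, \lambda n, c' \log(\lambda n)/(\lambda n))$ is disconnected with high probability when $c' < 1$, and monotonicity transfers this to the smaller-radius graph $\text{RGG}(1, \lambda n, r\log n/n)$. Thus the vertex visibility graph is disconnected with probability $1 - o(1)$.

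The final step is the impossibility deduction, which mirrors the reasoning used for Proposition \ref{prop:impossibility_disconnected_graph}. On the event that the vertex visibility graph is disconnected there are at least two connected components, and because $\overline{f}_\text{in}(t) = \overline{f}_\text{out}(t) = 0$ for $t > r \log n$, no edge of $G$ can join distinct components. Flipping the community labels of every vertex in one component therefore leaves the likelihood of the observed graph unchanged, while producing a labeling that is not a global sign flip of $\sigma^*$ (both the flipped component and its complement are nonempty). Consequently there is a Flip-Bad configuration, so the MLE fails with constant, nonzero probability, and since the MLE is optimal, any estimator $\hat\sigma$ fails to achieve exact recovery.

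The main obstacle is the matching of scales in the second step: one must carry the $\log \lambda$ correction through carefully so that the strict inequality $\lambda r < 1$ (rather than $\lambda \nu_1 r < 1$) is exactly what certifies $c' < 1$ in the lemma, and one must justify the monotone coupling that lets a disconnectivity statement at radius $c' \log N / N$ descend to the true, slightly smaller radius $r\log n/n$. The disconnectivity-implies-impossibility argument is by now standard and needs only the observation that cross-component pairs carry zero edge probability.
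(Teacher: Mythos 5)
Your proposal is correct and follows essentially the same route as the paper: rescale space by $1/n$ to identify the vertex visibility graph with $\text{RGG}(1, \lambda n, (r\log n)/n)$ under the toroidal metric, invoke Lemma \ref{lem:1d_connectivity_threshold_toroidal} to get disconnectivity with high probability when $\lambda r < 1$, and deduce impossibility because flipping the labels of one connected component leaves the likelihood unchanged, so the (optimal) MLE fails with constant probability. Your explicit monotone-coupling argument handling the $\log\lambda$ discrepancy between $\log n$ and $\log(\lambda n)$ is a slightly more careful rendering of a step the paper's proof passes over silently, but it is the same argument.
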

\begin{proof}
    We first rescale space by a factor of $1 / n$. Then, the vertex visibility graph becomes the random geometric graph $\text{RGG}(1, \lambda n, (r \log n)/n)$. By Lemma \ref{lem:1d_connectivity_threshold_toroidal}, the vertex visibility graph is disconnected with high probability if the visibility radius equals $(c \log n) / (\lambda n)$ where $c < 1$. Hence, the vertex visibility graph is disconnected with high probability when $r < 1 / \lambda$, i.e. when $\lambda r < 1$. 

    Now, we show that the MLE fails with a constant, nonzero probability. Observe that if the vertex visibility graph is disconnected, then then the maximum likelihood estimator labels the vertices in each connected component independently from vertices in other components. Therefore, the MLE fails with nonzero probability because if we flip the labels of all vertices in a particular component, the likelihood remains the same. Formally, suppose that we have $k \geq 2$ components, which we call $C_1, C_2, \ldots, C_k$, and let $t \in \{\pm 1\}^k$. Then, any estimator $\sigma_t: V \to \{-1, +1\}$ defined
    \begin{equation*}
        \sigma_t(v) = \sigma^*(v) \sum_{i=1}^k t_i \sum_{j \in C_i} \1{\{v \in B_j\}}
    \end{equation*}
    has the same likelihood as the true labeling $\sigma^*$. Therefore, the error probability of the MLE is at least $1 - 1/2^{k-1}$, which implies that exact recovery is impossible.
    \end{proof}

Combining Propositions \ref{prop:impossibility_disconnected_graph}, \ref{prop:impossibility_connected_graph}, and \ref{prop:impossibility_1d} together yields Theorem \ref{thm:impossibility}.

\section{Conclusion and Future Work}
\label{sec:conclusion}
In this paper, we identified the information-theoretic threshold for exact recovery in the two-community symmetric GSBM, where the edge probabilities between two vertices depend on their distance as well as their communities. In addition, we provide an efficient two-phase algorithm which achieves exact recovery above this threshold. 

There are several directions for future work. One direction would be considering more general models, such as the GSBM with multiple communities and arbitrary edge weight distributions, and deriving the information-theoretic threshold for exact recovery in these models as well. Another direction for future research is relaxing Assumption \ref{assump:regularity_conditions}, the regularity conditions we impose on $f_\text{in}$ and $f_\text{out}$.

\bibliographystyle{plain}
\bibliography{Refs}

\end{document}